\DeclareMathOperator{\curl}{curl}
\newtheorem{thm}{Theorem}[section]
\newtheorem{lem}[thm]{Lemma}
\newtheorem{assumption}[thm]{Assumption}
\newtheorem{definition}[thm]{Definition}
\newtheorem{lemma}[thm]{Lemma}
\newtheorem{proposition}[thm]{Proposition}
\newtheorem{corollary}[thm]{Corollary}
\theoremstyle{remark}
\newtheorem{rem}[thm]{Remark}
\newtheorem{notation}{Notation}[section]
\newcommand{\nb}{\nabla}
\newcommand{\kp}{\kappa}
\newcommand{\Ab}{\mathbf{A}}
\newcommand{\Cb}{\mathbb{C}}
\newcommand{\Fb}{\mathbf{F}}
\newcommand{\R}{\mathbb{R}}
\newcommand{\norm}[1]{\left\|#1\right\|}
\newcommand{\ddiv}{{\rm div\,}}
\numberwithin{equation}{section}
\title{The distribution of 3D superconductivity near the second critical field}
\author[A. Kachmar]{Ayman Kachmar}
\address{Lebanese University, Department of Mathematics, Hadath, Lebanon}
\email{ayman.kashmar@gmail.com}
 \author[M. Nasrallah]{Marwa Nasrallah}
 \address{Lebanese International University, Beirut, Lebanon \newline \& Lebanese University, faculty of Sciences, Section IV, Bekaa, Lebanon}
\email{marwa.nasrallah@liu.edu.lb}
\begin{document}
\maketitle 


\begin{abstract}
We study the minimizers of the Ginzburg-Landau energy functional with a constant magnetic field  in a three dimensional bounded domain. The functional depends on two positive parameters, the Ginzburg-Landau parameter and  the intensity of the applied magnetic field, and acts on complex valued functions  and vector fields.
We establish a formula for the distribution of the $L^2$-norm of the minimizing complex valued function (order parameter). The formula is valid in the regime where the  Ginzburg-Landau parameter is large and  the  applied magnetic field is close to the second critical field---the threshold value corresponding to the transition from the superconducting to the normal phase  in the bulk of the sample.  Earlier results are valid in $2D$ domains and  for the $L^4$-norm in $3D$ domains.
\end{abstract}

\section{Introduction}

In this paper, we derive a formula displaying the distribution of the density of the superconducting electron pairs (Cooper pairs) in a superconducting sample. Such a formula has been obtained in \cite{Ka-SIAM} when the sample occupies a cylindrical domain with an infinite height. The novelty here is that the sample is allowed to occupy any bounded three dimensional domain with a smooth boundary.

Our results are valid for type~II superconductors within the Ginzburg-Landau theory. In this theory, a superconducting sample is distinguished by a material parameter $\kappa>0$. $\kappa$ is called the Ginzburg-Landau parameter. When the sample is placed in a magnetic field, we will denote the intensity of the magnetic field by the positive parameter $H>0$. As $H$ varies, the state of superconductivity in the sample will undergo several phase transitions that we outline below:
\begin{itemize}
\item There is a first critical value $H_{C_1}>0$ such that, if $H<H_{C_1}$, the sample remains in a perfect superconducting state and repels the applied magnetic field.
\item There is a second critical value $H_{C_2}>H_{C_1}$ such that, if $H_{C_1}<H<H_{C_2}$, then the applied magnetic field penetrates the  sample in point defects and these point defects are in the normal (non-superconducting) state. The rest of the sample is in the superconducting state. The point defects are arranged along a lattice.
\item There is a third critical value $H_{C_3}>H_{C_2}$ such that, if $H_{C_2}<H<H_{C_3}$, then the bulk of the sample is in the normal state and the surface of the sample is in the superconducting state.
\item If $H>H_{C_3}$, all the sample is in the normal state. 
\end{itemize}
We refer the reader to the book of de\,Gennes \cite{dGe} for the physical background. Using the Ginzburg-Landau model and rigorous mathematical methods, the critical values (fields) $H_{C_1}$, $H_{C_2}$ and $H_{C_3}$ are identified in the large $\kappa$ regime. For samples occupying infinite cylindrical domains, we refer the reader to the papers \cite{AfSe, Alm-CMP,  CR, CR2, FK-am, Pan, SS02} and the two monographs \cite{FH-b, SS-b}. For general three dimensional domains, we refer the reader to the papers \cite{BJS, FH-3D, FK-cpde, FK-jmpa, HeMo, Ka-JFA, Lu-Pan}. The value $H_{C_2}$ is called the second critical field. Existing results suggest that $H_{C_2}\sim\kappa$ as $\kappa\to\infty$, for samples with Ginzburg-Landau parameter $\kappa$ (cf. \cite{AfSe, FK-am, Pan}).

 Suppose that the superconducting sample occupies a domain $\Omega\subset\R^3$. The state of the superconductivity is described using a complex-valued function $\psi:\Omega\to\mathbb C$ and a vector field $\Ab:\Omega\to\R^3$. The function $\psi$ is called the Ginzburg-Landau parameter and the vector field $\Ab$ is called the magnetic potential. The quantity $|\psi|^2$ measures the density of the superconducting electron pairs (Cooper pairs) hence when $\psi(x)\approx0$ the sample is in the normal state at $x$.  At equilibrium, the configuration $(\psi,\Ab)$ minimizes the Ginzburg-Landau energy.  

If the region $\Omega$ is an infinite cylinder with cross section $U\subset\R^2$ and the applied magnetic field is parallel to the cylinder's axis, then $\psi$ and $\Ab$ can be reduced to functions defined on $U$. In this case, under the assumptions
\begin{equation}\label{eq:assum-2D}
\kappa\to\infty\quad{\rm and}\quad \kappa^{-1/2}\ll1-\frac{H}\kappa\ll 1\,,
\end{equation}
the density $|\psi|^2$ satisfies (cf. \cite{Ka-SIAM})
\begin{equation}\label{eq:Ka-2D-main}
\int_U|\psi|^2\,dx=-E_{\rm Ab}|U|\,[\kappa-H]^2+o([\kappa-H]^2)\,.\end{equation}
Here $E_{\rm Ab}\in[-\frac12,0)$ is a universal constant, called the Abrikosov constant and  will be defined later. 

In \eqref{eq:assum-2D}, we use the following notation. For positive functions $a(\kappa)$ and $b(\kappa)$,  $a(\kappa)\ll b(\kappa)$ means that there exists $\delta(\kappa)$ such that $\displaystyle\lim_{\kappa\to\infty}\delta(\kappa)=0$ and $a(\kappa)=\delta(\kappa)b(\kappa)$.

Note that the assumption in \eqref{eq:assum-2D} corresponds to the regime close to the second critical field  and is the optimal assumption needed for \eqref{eq:Ka-2D-main} to be valid (cf. \cite{FK-am, Ka-SIAM}).

The aim of this paper is to obtain an analogue of the formula in \eqref{eq:Ka-2D-main} when the domain $\Omega$ is a general bounded domain of $\R^3$ with a smooth boundary. This will improve and complete the results in \cite{FK-cpde, FK-jmpa}.

Hereafter, we suppose that $\Omega\subset\R^3$ is {\bf open}, {\bf bounded}, has a {\bf finite} number of {\bf connected} components and with a {\bf smooth boundary}.  For every configuration 
$(\psi,\Ab)\in H^1(\Omega;\Cb)\times H^1_{\rm loc}(\R^3;\R^3)$, we define the Ginzburg-Landau energy of $(\psi,\Ab)$ as follows
\begin{multline}\label{GL-Energy}
\mathcal{E}^{3D}(\psi,{\bf A})=\int_{\Omega}\left[ |(\nabla -i\kappa H{\bf A})\psi|^{2}-\kappa^{2}|\psi|^{2}+\frac{\kappa^{2}}{2}|\psi|^{4}                                  \right]dx+\kappa^{2}H^{2}\int_{\R^{3}}|{\curl} {\bf A}-\beta|^{2}dx.
\end{multline}
Here, as explained earlier,  $\kappa$ and $H$ are two positive parameters, and  $\beta=(0,0,1) $ is the profile and direction of the (constant) applied magnetic field.

Let us introduce the space  $ \dot{H}^{1}_{\ddiv , {\bf F}}(\R^{3})$ of vector fields defined as follows
\begin{equation}\label{mag-Sob-space}
\dot{H}^{1}_{\ddiv, {\bf F}}(\R^{3})=\Big\{{\bf A}:\R^3\to\R^3\quad:\quad \ddiv {\bf A}=0, \quad {\rm and}\quad {\bf A-F}\in \dot{H}^{1}(\R^{3})\Big\}\,,
\end{equation}
where ${\bf F}$ is the following magnetic potential 
\begin{equation}\label{MP-F}
{\bf F}(x)= (-x_{2}/2,x_{1}/2,0),\quad \forall \,x=(x_{1},x_{2},x_{3})\in \R^{3},
\end{equation}
and
the space  $ \dot{H}^{1}(\R^{3})$ is the homogeneous Sobolev space, i.e. the closure of $C^{\infty}_{c}(\R^{3})$ under the norm $u\mapsto \norm{u}_{\dot{H}^{1}(\R^{3})}:= \norm{\nb u}_{L^{2}(\R^{3})}$.

The energy in \eqref{GL-Energy} will be minimized over the space $H^1(\Omega;\mathbb C)\times \dot{H}^{1}_{\ddiv, {\bf F}}(\R^{3})$. Actually, this is the natural `energy' space for the functional in \eqref{GL-Energy}, see \cite{FH-b}. We thereby introduce the following ground state energy
\begin{equation}\label{eq:gse**}
E_{g.st}(\kp,H)=\inf \{\mathcal{E}^{3D}(\psi,{\bf A})\quad:\quad (\psi,{\bf A})\in H^{1}(\Omega;\mathbb{C})\times \dot{H}^{1}_{\ddiv,{\bf F}}(\R^{3})\}\,.
\end{equation}
For a given $\kappa$ and $H$, we will call a minimizer of the functional \eqref{GL-Energy} a configuration $(\psi,\Ab)\in H^{1}(\Omega;\mathbb{C})\times \dot{H}^{1}_{\ddiv,{\bf F}}(\R^{3})$ satisfying $\mathcal{E}^{3D}(\psi,{\bf A})=E_{g.st}(\kp,H)$. Obviously, such a configuration will depend on $\kappa$ and $H$. To emphasize this dependence, we will denote such minimizers by $(\psi,\Ab)_{\kappa,H}$. 

Note that a minimizer $(\psi,\Ab)_{\kappa,H}$ is a {\it critical point} of the functional in \eqref{GL-Energy}, i.e.
$$\forall~(\phi,a)\in H^1(\Omega;\Cb)\times C_c^\infty(\R^3;\R^3)\,,~\frac{d}{dt}\mathcal{E}^{3D}(\psi+t\phi,\Ab)\Big|_{t=0}=0~{\rm and}~ \frac{d}{dt}\mathcal{E}^{3D}(\psi,\Ab+ta)\Big|_{t=0}=0\,.$$
More precisely,  a critical point $(\psi,{\bf A})\in H^{1}(\Omega;\mathbb{C})\times H^{1}_{\ddiv,{\bf F}}(\R^{3})$  is a weak solution of the Ginzburg-Landau equations,
\begin{equation}\label{E-GL}\left\{
\begin{array}{lcl}
-(\nb-i\kp H{\bf A})^{2}\psi=\kp^{2}(1-|\psi|^{2})\psi& {\rm in} &\Omega\\
\curl^{2}{\bf A}=-\dfrac{1}{\kp H}{\rm Im}(\bar{\psi}(\nabla -i\kp H{\bf A})\psi){\bf 1}_{\Omega}&{\rm in}&\R^{3}\\
\nu\cdot(\nb-i\kp H{\bf A})\psi=0&{\rm on}&\partial\Omega.
\end{array}
\right.
\end{equation}
where ${\bf 1}_{\Omega}$ is the characteristic function of the domain $\Omega$, and $\nu$ is the unit interior normal vector of $\partial\Omega$.

Minimizers of the functional in \eqref{GL-Energy} are studied in \cite{FK-cpde, FK-jmpa}. Under the assumption in \eqref{eq:assum-2D}, if $(\psi,\Ab)_{\kappa,H}$ is a minimizer of the functional in \eqref{GL-Energy}, then
\begin{equation}\label{eq:l4-FK-main}
\int_\Omega|\psi|^4\,dx=-2E_{\rm Ab}|\Omega|\,\left(1-\frac{H}\kappa\right)^2+
o\left(\left(1-\frac{H}\kappa\right)^2\right)\,.\end{equation}
We will improve this formula in Theorem~\ref{thm:op-l4} below. 
We will work under the following assumption:

\begin{assumption}\label{assump}~
\begin{itemize}
\item $\alpha:\R_+\to\R_+$ and $\beta:\R_+\to\R_+$ are two functions satisfying 
$$\lim_{\kappa\to\infty}\alpha(\kappa)=\infty\,,\quad\lim_{\kappa\to\infty}\beta(\kappa)=0\quad{\rm and}\quad \alpha(\kappa)\leq \beta(\kappa)\kappa^{1/2} {~\rm in~a~neighborhood~of~}\infty\,.$$
\item $\kappa>0$ and $H>0$ satisfy 
$\displaystyle\alpha(\kappa)\kappa^{-1/2}\leq 1-\frac{H}\kappa\leq \beta(\kappa)$.
\end{itemize}
\end{assumption}
In this paper, we will prove the following theorem (compare with \eqref{eq:l4-FK-main}):
\begin{thm}\label{thm:op-l4}{\bf[Sharp bound in $L^4$-norm]}
 
There exist $\kappa_0>0$ and a function ${\rm err}:[\kappa_0,\infty)\to(0,\infty)$ such that:
\begin{itemize}
\item $\displaystyle\lim_{\kappa\to\infty}{\rm err}(\kappa)=0$\,;
\item the following inequality holds
\begin{equation}\label{eq:l4-KN-main}
\dfrac1{|Q_{\kp}|}\int_{Q_{\kp}}{|\psi|^{4}}dx\leq -2E_{\rm Ab}\Big(1-\dfrac{H}{\kp}\Big)^{2}+\Big(1-\dfrac{H}{\kp}\Big)^{2}{\rm err}(\kappa)\,,
\end{equation}
where 
\begin{itemize}
\item $E_{\rm Ab}$ is the Abrikosov constant introduced below in Theorem~\ref{thm:Ab}\,;
\item $\kappa\geq\kappa_0$ and $(\kappa,H)$ satisfy Assumption~\ref{assump}\,;
\item $(\psi,\Ab)$ is a solution of \eqref{E-GL}\,;
\item $Q_{\kappa}$ is any cube of side length $\kappa^{-1/2}$ and satisfying
$
\overline{Q_{\kappa}}\subset \{{\rm dist}(x,\partial\Omega)>2\kp^{-1/2}\}.
$
\end{itemize}
\end{itemize}
\end{thm}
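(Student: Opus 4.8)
\emph{Overall scheme.} Write $b=H/\kp$ and introduce the local averages $m_2:=\frac1{|Q_\kp|}\int_{Q_\kp}|\psi|^2\,dx$ and $m_4:=\frac1{|Q_\kp|}\int_{Q_\kp}|\psi|^4\,dx$. The plan is to establish two inequalities on the interior cube and then combine them: an \emph{equation--type} bound $m_4\leq(1-b)\,m_2+o\big((1-b)^2\big)$ coming from the first Ginzburg--Landau equation, and an \emph{Abrikosov} bound $m_2^2\leq -2E_{\rm Ab}\,m_4+o\big((1-b)^2\big)$ reflecting that $\psi$ is essentially in the lowest Landau level. Substituting the first into the second gives $m_2\leq -2E_{\rm Ab}(1-b)\big(1+o(1)\big)$, and reinserting this into the first yields $m_4\leq -2E_{\rm Ab}(1-b)^2\big(1+o(1)\big)$, which is exactly \eqref{eq:l4-KN-main} with ${\rm err}(\kp)\to0$.

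\emph{A priori input.} I would start from the estimates for solutions of \eqref{E-GL} proved in \cite{FK-cpde, FK-jmpa}: the bound $\|\psi\|_{L^\infty}\leq1$; the interior bounds $|\psi|\lesssim(1-b)^{1/2}$ and $|(\nb-i\kp H\Ab)\psi|\lesssim\kp\,(1-b)^{1/2}$ at distance $\gtrsim\kp^{-1/2}$ from $\partial\Omega$; and the smallness of the self--induced field $\curl\Ab-\beta$, which permits replacing $\Ab$ by $\Fb$ in the interior up to negligible errors. Because $\overline{Q_\kp}$ sits at distance $>2\kp^{-1/2}$ from $\partial\Omega$, these bounds survive on a slightly dilated cube $Q'\supset Q_\kp$ with $|Q'|/|Q_\kp|=1+o(1)$, on which the cut--offs below are supported, and no surface superconductivity enters.

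\emph{The two inequalities.} For the first, test the first line of \eqref{E-GL} against $\chi^2\bar\psi$, where $\chi\equiv1$ on $Q_\kp$ is supported in $Q'$ with $|\nb\chi|\lesssim\kp^{1/2}$. The quadratic and quartic terms produce $\kp^2\int\chi^2\big(|\psi|^2-|\psi|^4\big)\,dx$, the magnetic term is bounded below, through the IMS formula and the lowest Landau level bound for the magnetic Laplacian on the whole plane, by $\kp H\int\chi^2|\psi|^2\,dx$ minus a localisation error, and the commutator generated by $\nb\chi$ is similarly controlled by the a priori bounds. After dividing by $\kp^2|Q_\kp|$, the localisation error and the commutator are of order $\kp^{-1/2}(1-b)$, which the lower constraint $(1-b)\geq\alpha(\kp)\kp^{-1/2}$ of Assumption~\ref{assump} turns into $o\big((1-b)^2\big)$; this yields the first inequality. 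For the second, I record the lowest--level concentration: with $\chi$ as above, the whole--plane bound gives $\int|(\nb-i\kp H\Fb)(\chi\psi)|^2\geq\kp H\int|\chi\psi|^2$, while the equation controls the excess $\int|(\nb-i\kp H\Fb)(\chi\psi)|^2-\kp H\int|\chi\psi|^2\lesssim\kp^2(1-b)\int\chi^2|\psi|^2$, so the spectral gap $2\kp H$ between the first two Landau levels forces $\|(\chi\psi)_\perp\|_{L^2}^2\lesssim(1-b)\,\|\chi\psi\|_{L^2}^2$ for the part orthogonal to the lowest level. Rescaling $x\mapsto(\kp H)^{1/2}x$ turns $Q_\kp$ into a square of side $\sim\kp^{1/2}$ carrying $\sim\kp$ flux quanta at unit field; applying the variational characterisation of $E_{\rm Ab}$ from Theorem~\ref{thm:Ab} to the lowest--level component and absorbing $(\chi\psi)_\perp$ through the concentration bound produces the second inequality.

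\emph{Main obstacle.} The delicate point is the second inequality: transferring the infinite--volume (periodic) definition of $E_{\rm Ab}$ to a single finite cube while keeping the error $o\big((1-b)^2\big)$. One must show that the energy in the higher Landau levels, the contribution of the dilation shell, and the boundary layer incurred when the universal Abrikosov ratio is realised on a box of large but finite size are all negligible against $-2E_{\rm Ab}(1-b)^2$. This is where both halves of Assumption~\ref{assump} are used: the upper constraint $1-b\leq\beta(\kp)\to0$ keeps $\psi$ concentrated in the lowest Landau level so the Abrikosov description is accurate, while the lower constraint $1-b\geq\alpha(\kp)\kp^{-1/2}$ is precisely the threshold making the localisation and field--replacement errors lower order than the main term.
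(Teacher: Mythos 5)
Your overall scheme is not circular in its algebra, but it rests on two inputs that are not available, and one of them is precisely the obstruction this paper is designed to circumvent. First, you cite as a priori input the interior bound $|\psi|\lesssim(1-H/\kp)^{1/2}$ from \cite{FK-cpde,FK-jmpa}. No such bound is known in $3D$: the paper states explicitly that the estimate \eqref{eq:2D-up-FK} ``is shown to hold in $2D$ domains'' but could not be proved in $3D$, and that Theorem~\ref{thm:op-l4} is introduced \emph{as a substitute} for it. So your a priori input is essentially the conclusion you are proving. (Some of its uses can be repaired: shell and localisation errors can be controlled with $\norm{\psi}_\infty\leq1$ plus a crude local $L^4$ bound obtained by Cauchy--Schwarz from your first inequality alone, which is how the paper itself proceeds in Remark~\ref{rem:op-l4} and Section~5; but you cannot simply quote the pointwise bound.)

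Second, and more seriously, your ``Abrikosov'' inequality $m_2^2\leq-2E_{\rm Ab}\,m_4+o\big((1-H/\kp)^2\big)$ fails on the full cube $Q_\kp$ as you set it up. In $\R^3$ the operator $-(\nb-i\kp H\Fb)^2$ has \emph{no} spectral gap above its bottom $\kp H$ (the free motion along the field direction fills in $[\kp H,\infty)$), so there is no ``gap $2\kp H$'' forcing concentration on the lowest level; the paper's replacement is the fiberwise $2D$ gap for the periodic box operator (cf.\ \eqref{eq:qf-3D*}, \eqref{eq:3D-sp} and Lemma~\ref{lem:op-3D}), which yields only $\norm{\Pi_2 v}_{L^p}\leq C\sqrt{\gamma}\norm{v}_{L^2}$ with $\gamma\approx 1-H/\kp$. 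To absorb the non-lowest-level part into $m_4$ one must compare $\norm{\Pi_2v}_{L^4}$ with $\norm{v}_{L^4}$, which by H\"older costs a factor $\sqrt{1-H/\kp}\,(R^2L)^{1/4}$; on your cube, after rescaling, $R\approx L\approx\kp^{1/2}$, so this factor is $\gg1$ throughout the regime of Assumption~\ref{assump}, and the error is \emph{not} $o\big((1-H/\kp)^2\big)$. This is exactly why the paper, when it does run your type of argument (Section~5, Theorem~\ref{corol:Pi-Ab}), must subdivide $Q_\ell$ into thin boxes with $(1-H/\kp)^2R'^2L\ll1$, add a counting argument over boxes, respect the flux quantization $R^2\in2\pi\mathbb N$ (your fixed side $\kp^{-1/2}$ violates it) --- and even then only obtains the result under the stronger Assumption~\ref{assump'} with exponent $9/26$, not under Assumption~\ref{assump}. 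The paper's actual proof of Theorem~\ref{thm:op-l4} avoids all of this: after localisation and gauge fixing it bounds the \emph{linear} energy below by $\mathcal M_0(b,R)\norm{\chi\psi}_4^2$ using the new nonlinear eigenvalue quantity of Theorem~\ref{NF} (a Dirichlet problem, so no quantization or projections are needed), producing a closed quadratic inequality in $\norm{\chi\psi}_4^2$ that requires neither $L^2$ information, nor pointwise bounds, nor lowest-Landau-level concentration. That mechanism --- not a refinement of your two-inequality scheme --- is what makes the sharp constant reachable in the full range of Assumption~\ref{assump}.
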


Note that the conclusion in Theorem~\ref{thm:op-l4} has been known  in the following cases:
\begin{itemize}
\item when $Q_\kappa$ is replaced by the whole domain $\Omega$ but without specifying the (sharp) constant $E_{\rm Ab}$ (cf. \cite{Alm-3D})\,;
\item when $Q_\kappa$ is replaced by any open subset $D\subset \overline{D}\subset\Omega$ and with a smooth boundary (cf. \cite{FK-cpde}). 
\end{itemize}

In light of \eqref{eq:l4-FK-main}, we observe that the constant $E_{\rm Ab}$ in \eqref{eq:l4-KN-main} is optimal. 

Let us point out that the derivation in \cite{FK-cpde, FK-jmpa} of the upper bound in \eqref{eq:l4-FK-main}  relies on the estimate in \cite{Alm-3D} to control the error terms. However, the proof we give to Theorem~\ref{thm:op-l4} does not use ingredients from \cite{Alm-3D} but instead uses Theorem~\ref{NF} in this paper, which displays a new formulation of the Abrikosov constant  in terms of a non-linear eigenvalue problem.

Our next result is an asymptotic formula of the $L^2$-distribution of the minimizing order parameters.

\begin{thm}\label{thm:op-l2-main}{\bf [Distribution of the density]}
Let $D\subset\Omega$ be an open set such that $|\partial D|=0$. 
Suppose that $H$ is a function of $\kappa$ satisfying
\begin{equation}\label{eq:assum-3D-H}
H\leq \kappa\quad{\rm and}\quad \kappa^{-9/26}\ll1-\frac{H}\kappa\ll1\,.\end{equation}

If $(\psi,\Ab)_{\kappa,H}$ is a minimizer of the functional in \eqref{GL-Energy}, then as $\kappa\to\infty$,
\begin{equation}\label{eq:l2-3D-main}
\frac{1}{|D|}\int_D|\psi|^2\,dx=-2E_{\rm Ab}\left(1-\frac{H}\kappa\right)+o\left(1-\frac{H}\kappa\right)\,.
\end{equation}
Here $E_{\rm Ab}\in[-\frac12,0)$ is the universal constant defined in Theorem~\ref{thm:Ab} below.
\end{thm}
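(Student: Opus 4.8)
The plan is to reduce the whole statement to one quantity — the excess of the magnetic kinetic energy above the lowest Landau level — and then to control it. Write $\varepsilon=1-\frac H\kp$ and $T=\int_\Omega|(\nb-i\kp H\Ab)\psi|^2\,dx$. Testing the first equation of \eqref{E-GL} against $\bar\psi$ and using the Neumann condition yields $T=\kp^2\int_\Omega|\psi|^2\,dx-\kp^2\int_\Omega|\psi|^4\,dx$, i.e.
\begin{equation*}
\int_\Omega|\psi|^2\,dx=\frac1\varepsilon\int_\Omega|\psi|^4\,dx+\frac{R}{\kp^2\varepsilon},\qquad R:=T-\kp H\int_\Omega|\psi|^2\,dx .
\end{equation*}
Here $R$ is, up to a boundary correction supported in $\{\dist(x,\partial\Omega)<\kp^{-1/2}\}$, nonnegative, because the magnetic Laplacian with the nearly constant field $\kp H\,\curl\Ab\approx\kp H\beta$ has spectrum starting at $\kp H$. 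In view of the $L^4$ formula \eqref{eq:l4-FK-main}, the theorem will follow once I show that $R=o(\kp^2\varepsilon^2)$, together with its local counterpart.

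I would obtain $R=o(\kp^2\varepsilon^2)$ by squeezing the ground state energy $E_{g.st}(\kp,H)$ of \eqref{eq:gse**}. Rewriting \eqref{GL-Energy} at the minimizer as
\begin{equation*}
E_{g.st}(\kp,H)=R+\Big(-\kp^2\varepsilon\int_\Omega|\psi|^2\,dx+\tfrac{\kp^2}2\int_\Omega|\psi|^4\,dx\Big)+\kp^2H^2\int_{\R^3}|\curl\Ab-\beta|^2\,dx ,
\end{equation*}
I first bound $E_{g.st}(\kp,H)\le E_{\rm Ab}\kp^2|\Omega|\varepsilon^2(1+o(1))$ from above by inserting an Abrikosov trial state (a lowest-Landau-level profile, periodized on the optimal lattice and cut off in a layer of width $\kp^{-1/2}$ near $\partial\Omega$, paired with $\Ab=\Fb$). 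For the matching lower bound I project $\psi$ onto the lowest Landau level and estimate the middle bracket by $E_{\rm Ab}\kp^2|\Omega|\varepsilon^2(1-o(1))$; this is exactly where I would use the nonlinear eigenvalue reformulation of the Abrikosov constant from Theorem~\ref{NF}, which bounds the quartic term from below with the sharp constant while keeping the non-lowest-Landau-level remainder explicit (that remainder is itself controlled by $R$, so the estimate closes). Since $R\ge-o(\kp^2\varepsilon^2)$ and the field term is nonnegative, comparing the two bounds forces both to be $o(\kp^2\varepsilon^2)$. Feeding this back into the identity above and using \eqref{eq:l4-FK-main} gives the global statement $\frac1{|\Omega|}\int_\Omega|\psi|^2\,dx=-2E_{\rm Ab}\varepsilon(1+o(1))$.

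To localize to $D$, I prove the upper bound $\frac1{|D|}\int_D|\psi|^2\,dx\le-2E_{\rm Ab}\varepsilon(1+o(1))$ by covering the part of $D$ at distance $>2\kp^{-1/2}$ from $\partial\Omega$ with cubes $Q_\kp$ of side $\kp^{-1/2}$ and applying Theorem~\ref{thm:op-l4} on each. The concentration $R=o(\kp^2\varepsilon^2)$ makes $\psi$ close to the lowest Landau level on almost every cube, so the Abrikosov inequality
\begin{equation*}
\Big(\frac1{|Q_\kp|}\int_{Q_\kp}|\psi|^2\,dx\Big)^2\le-2E_{\rm Ab}\,\frac1{|Q_\kp|}\int_{Q_\kp}|\psi|^4\,dx\,(1+o(1))
\end{equation*}
holds cube by cube and upgrades the trivial Cauchy--Schwarz constant to the sharp $\sqrt{-2E_{\rm Ab}}$. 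Summing and discarding the boundary layer (of volume $O(\kp^{-1/2})=o(\varepsilon)$, where $|\psi|\le1$) and the cubes meeting $\partial D$ (whose total volume tends to $0$ since $|\partial D|=0$, while the local $L^2$-density there is $O(\varepsilon)$ by Theorem~\ref{thm:op-l4} and Cauchy--Schwarz) gives the local upper bound. The local lower bound then follows by complementation: $\Omega\setminus D$ is admissible since $|\partial(\Omega\setminus D)|\le|\partial D|+|\partial\Omega|=0$, and subtracting its upper bound from the global lower bound yields $\frac1{|D|}\int_D|\psi|^2\,dx\ge-2E_{\rm Ab}\varepsilon(1-o(1))$.

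The main obstacle is the lowest-Landau-level concentration $R=o(\kp^2\varepsilon^2)$ and its cube-wise version, which requires controlling three competing errors at once — the non-lowest-Landau-level component of $\psi$, the boundary layer near $\partial\Omega$, and the induced field $\curl\Ab-\beta$ — each measured against the spectral gap $2\kp H$ to the next Landau level. Balancing these errors against the leading order $\varepsilon$ is what fixes the admissible window, and I expect the hypothesis $\kp^{-9/26}\ll1-\frac H\kp$ in \eqref{eq:assum-3D-H} to be exactly the threshold below which all three become $o(\varepsilon)$.
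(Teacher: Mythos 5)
Your high-level reduction is attractive and closer in spirit to the $2D$ argument of \cite{Ka-SIAM} than to this paper, but it has a genuine gap at its quantitative core: the cube-wise lowest-Landau-level concentration on cubes of side $\kp^{-1/2}$ fails in the admissible regime. Write $\varepsilon=1-H/\kp$ and rescale a side-$\kp^{-1/2}$ cube by $\sqrt{\kp H}$, so it becomes a box of rescaled side $\approx\kp^{1/2}$ in all three directions. The spectral-gap estimate (the paper's Lemma~\ref{lem:op-3D}, where the projection $\Pi_1$ must be taken fiberwise in $x_3$, since the $3D$ operator has no gap above $\kp H$) only gives $\norm{v-\Pi_1v}_{L^4}\leq C\sqrt{\gamma}\,\norm{v}_{L^2}$ with $\gamma\approx\varepsilon$. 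To run your Abrikosov inequality $\bigl(\frac1{|Q|}\int|v|^2\bigr)^2\leq-2E_{\rm Ab}\frac1{|Q|}\int|v|^4\,(1+o(1))$ you need the non-LLL part to be small \emph{relative to} $\norm{v}_{L^4}$, and by H\"older this costs a factor of the rescaled box volume to the power $1/4$: the relevant smallness condition is $\varepsilon^2R^2L\ll1$. On your cubes $R=L\approx\kp^{1/2}$, so $\varepsilon^2R^2L\approx\varepsilon^2\kp^{3/2}\gg\kp^{-9/13+3/2}=\kp^{21/26}\gg1$ throughout the window \eqref{eq:assum-3D-H} --- the inequality you need is false by a large margin, and no choice of isotropic cube reconciles $R\to\infty$ (needed for $c(R)/R^2\to E_{\rm Ab}$) with $\varepsilon^2R^3\ll1$. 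This tension is exactly what forces the paper's anisotropic construction: thin boxes of rescaled dimensions $R'\times R'\times L$ with $R'\approx L\approx\varepsilon^{-5/8}$, so that $R'\gg1$ and $\varepsilon^2R'^2L=\varepsilon^{1/8}\ll1$ simultaneously, together with the partition of unity, the sign classification $\mathcal J_\pm$ of boxes by the local linear energy, and the counting estimates \eqref{eq:step2-op-l2-main}--\eqref{eq:step2-op-l2-main*}. None of that machinery is present, or replaceable, in your sketch.

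A second, related soft spot: your global squeeze does not by itself yield $R=o(\kp^2\varepsilon^2)$. At a critical point one has the identity $E_{g.st}=-\frac{\kp^2}{2}\int_\Omega|\psi|^4\,dx+\kp^2H^2\int_{\R^3}|\curl\Ab-\beta|^2\,dx$, so matching energy bounds combined with the sharp $L^4$ asymptotics \eqref{eq:l4-FK-main} only force the induced-field term to be $o(\kp^2\varepsilon^2)$; the quantity $R$ cancels out. Extracting $R=o(\kp^2\varepsilon^2)$ requires bounding your middle bracket from below with the \emph{sharp} constant $E_{\rm Ab}$ while keeping the non-LLL remainder absorbed with a coefficient strictly less than $1$ --- which is precisely the cube-wise concentration you cannot obtain on side-$\kp^{-1/2}$ cubes, as above. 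By contrast, the paper never controls a global $R$: it proves two-sided per-cube estimates directly (Propositions~\ref{prop-1}--\ref{prop-2}, Corollary~\ref{corol:lb-op-l4}, then Lemma~\ref{thm:Pi-Ab} and Theorem~\ref{corol:Pi-Ab} on cubes of side $\ell\approx\mu^{1/6}\kp^{-1/3}$), and Theorem~\ref{thm:op-l2-main} follows by summing these over a covering of $D$; your complementation trick for the lower bound is then unnecessary, since the per-cube estimate is already two-sided. Your covering and boundary-layer bookkeeping ($\kp^{-1/2}=o(\varepsilon)$, cubes meeting $\partial D$ handled via $|\partial D|=0$ and Theorem~\ref{thm:op-l4}) is fine, and your guess that the exponent $9/26$ marks the error balance is correct in spirit, but the binding constraint is the one you did not isolate: $\ell\approx\mu^{1/6}\kp^{-1/3}$ versus $\ell\gg\mu^{-2}$ in Remark~\ref{rem:par}, i.e.\ the interplay between the gauge-replacement error and the thin-box geometry, not the spectral gap $2\kp H$ alone.
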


Note that the conclusion in Theorem~\ref{thm:op-l2-main}  is consistent with the formula in \eqref{eq:Ka-2D-main} but is valid under the more restrictive assumption in \eqref{eq:assum-3D-H}.
One reason that prevented us of proving \eqref{eq:l2-3D-main} under the assumption in \eqref{eq:assum-2D} is the lack of the upper bound 
\begin{equation}\label{eq:2D-up-FK}
\|\psi\|_{L^\infty(\Omega_{\kappa})}\leq C\left|1-\frac{H}{\kappa}\right|^{1/2}\quad\big(\Omega_\kappa=\{x\in\Omega~:~{\rm dist}(x,\partial\Omega)\gg\kappa^{-1}\}\big)\,. 
\end{equation}
 This upper bound is shown to hold in $2D$ domains (cf. \cite{FK-am}). Since we were not able to prove \eqref{eq:2D-up-FK} in  $3D$ domains, we used the estimate in Theorem~\ref{thm:op-l4}
as a substitute.  The price we paid is the restrictive assumption in \eqref{eq:assum-3D-H}. The technical reasons that led us to the assumption in \eqref{eq:assum-3D-H} are explained in Remark~\ref{rem:par}.

The rest of the paper is devoted to the proof of Theorems~\ref{thm:op-l4} and \ref{thm:op-l2-main}. It is organized as follows:
\begin{itemize}
\item Section~\ref{sec:LimEn} reviews various limiting energies studied in \cite{FK-cpde} and concludes with the proof of  Theorem~\ref{NF}. Theorem~\ref{NF} is new and not among the results in \cite{FK-cpde}.
\item Section~\ref{sec:l4} is devoted to the proof of Theorem~\ref{thm:op-l4}. It uses Theorem~\ref{NF} as a key ingredient.
\item Section~\ref{sec:en} establishes asymptotics of the Ginzburg-Landau energy in cubes with small lengths. The main conclusion here is summarized in Corollary~\ref{corol:lb-op-l4}. The assumption in \eqref{eq:assum-3D-H} is needed in this section.
\item Section~\ref{sec:l2} finishes the proof of Theorem~\ref{thm:op-l2-main}. We prove an energy asymptotics for the density in cubes with small lengths as well, see Corollary~\ref{corol:Pi-Ab}.   
\end{itemize}

\subsection*{Remark on the notation}

The parameters $\kappa$ and $H$ are allowed to vary in such a manner that $H/\kappa\in[c_1,c_2]$, where $0<c_1<c_2$ are fixed constants. Whenever the letter $C$ appears, it denotes a positive constant that is independent of $\kappa$ and $H$. Such a constant may depend on the domain $\Omega$, the constants $c_1,c_2$, etc. The value of $C$ might change from one formula to another.

In the proofs, the notaion $o(1)$ stands for an expression that depends on $\kappa$ and $H$ such that $o(1)\to0$ as $\kappa\to\infty$. However, this expression is independent of the choice of a minimizing/critical configuration $(\psi,\Ab)_{\kappa,H}$ of the functional in \eqref{GL-Energy}, but it depends on the constants $c_1,c_2$, the domain $\Omega$, etc. Sometimes we do local arguments in, say, a ball or a square of cener $x_0$ and radius $\ell$. In such arguments, the quantity $o(1)$ is {\it independent of the center} $x_0$ but do depend on the radius $\ell$. 

Finally, by writing $a(\kappa)\approx b(\kappa)$, we mean that the {\it positive}  functions $a(\kappa)/b(\kappa)$ and $b(\kappa)/a(\kappa)$ are bounded in a neighborhood of $\kappa=\infty$. In particular, our assumption on $\kappa$ and $H$ can be expressed as $H\approx \kappa$.

\section{Limiting energies}\label{sec:LimEn}
\subsection{Two-dimensional limiting energy}
\subsubsection{Reduced Ginzburg-Landau functional and thermodynamic limit}
Let $b>0$ and $D$ be an open subset in $\R^{2}$. We define the following reduced Ginzburg-Landau functional,
\begin{equation}\label{R-GL-2}
H^1(D)\ni u\mapsto G_{b,D}(u)=\int_{D}\Big( b |(\nb-i{\bf A}_{0})u|^{2}-|u|^{2}+\dfrac{1}{2}|u|^{4}\Big)dx\,,
\end{equation}
where
\begin{equation}\label{A0}
{\bf A}_{0}(x_{1},x_{2})=\dfrac{1}{2}(-x_{2},x_{1}),\qquad \Big( x=(x_{1},x_{2})\in\R^{2}\Big)\,.
\end{equation}
Given $R>0$, we denote by $K_{R}=(-R/2,R/2)^{2}$ the square of side length $R$ and center $0$. Let us introduce the following ground state energy
\begin{equation}\label{eq:en-m0}
m_{0}(b,R)=\inf_{u\in H^{1}_{0}(K_{R};\mathbb{C})}G_{b,K_{R}}(u)
\end{equation}
It is proved in \cite{AfSe, FK-cpde, SS02} that, for all $b\geq 0$, there exists $g(b)\in [-\frac12,0]$ such that
\begin{equation}\label{eq:g(b)}
g(b)= \lim_{R\rightarrow \infty}\dfrac{m_{0}(b,R)}{R^{2}}\,,
\end{equation}
and that the  function $[0,\infty)\ni b\mapsto g(b)\in[-1/2,0]$ is continuous, non-decreasing, $g(0)=-\frac12$  and $g(b)= 0$ for all $b\geq 1$. Moreover, there exists a universal constant $\alpha \in(0,1/2)$ such that, for all $b\in [0,1]$
\begin{equation}\label{bnd-on-g}
\alpha (b-1)^{2}\leq |g(b)|\leq \dfrac{1}{2}(b-1)^{2}.
\end{equation}
Also, for all $R\geq 1$ and  $b\in[0,1]$, it holds the estimate
\begin{equation}\label{eq:m0gb}
g(b)\leq \dfrac{m_{0}(b,R)}{R^{2}}\leq g(b)+\dfrac{C}{R}.
\end{equation}

\subsection{The $2D$ periodic Schr\"{o}dinger  operator with constant magnetic field}
Let $R>0$ and $K_{R}=(-R/2,R/2)\times(-R/2,R/2)$. In this section we assume that
\[
R^{2}\in 2\pi \mathbb{N}.
\]
We introduce the following space
\begin{multline}\label{ER}
E_{R}= \Bigg\{ u\in H^{1}_{\rm loc }(\R^{2};\mathbb{C})~:~ u(x_{1}+R,x_{2})=e^{iRx_{2}/2}u(x_{1},x_{2}), \\
u(x_{1},x_{2}+R)=e^{-ix_{1}/2}u(x_{1},x_{2}), \qquad (x_{1},x_{2})\in \R^{2}\Bigg\}\,.
\end{multline}
Recall the magnetic potential ${\bf A}_{0}$ in \eqref{A0}. Consider the operator
\begin{equation}\label{eq:op-2D}
P_{R}^{\rm 2D}=-(\nb -i{\bf A}_{0})^{2}
\end{equation}
with form domain $E_{R}$ introduced in \eqref{ER}. More precisely, $P_{R}^{\rm 2D}$ is the self-adjoint realization associated with the closed quadratic form
\[
E_{R}\ni f\mapsto Q_{R}^{\rm 2D}(f)= \norm{(\nb -i{\bf A}_{0})f}^{2}_{L^{2}(K_{R})}.
\]
The operator $P_{R}^{\rm 2D}$ has a compact resolvent. We denote  by $\big\{\mu_{j}(P_{R}^{\rm 2D})\big\}_{j\geq 1}$ the increasing sequence of its eigenvalues. The following proposition may be classical in the spectral theory of Schrodinger operators, but we refer to \cite{AfSe} or \cite{Alm-CMP} for a simple proof.
\begin{proposition}\label{prop:op-2D}
The operator $P_{R}^{\rm 2D}$ has the following properties:\begin{enumerate}
\item $\mu_{1}(P_{R}^{\rm 2D}) =1$, and   $\mu_{1}(P_{R}^{\rm 2D}) = 3$.
\item The space $L_{R}={\rm Ker}(P_{R}^{\rm 2D}-1) $ is finite dimensional and ${\rm dim}{L_{R}}=\dfrac{R^{2}}{2\pi}.$
\end{enumerate}
Consequently, denoting by $\Pi_{1}$ the orthogonal projection on the space $L_{R}$ in $L^{2}(K_{R})$ and by $\Pi_{2}={\rm Id}-\Pi_{1}$, we have for all $f\in D(P_{R}^{\rm 2D})$,
\begin{equation}\label{eq:gap}
\langle  P_{R}^{\rm 2D}\Pi_{2}f,\Pi_{2}^{\rm 2D}f       \rangle_{L^{2}(K_{R})} \geq  3\norm{\Pi_{2}f}_{L^{2}(K_{R})}^{2}.
\end{equation}
\end{proposition}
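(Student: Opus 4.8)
The plan is to diagonalize $P_{R}^{\rm 2D}$ through the ladder (creation/annihilation) structure of the constant-field magnetic Laplacian, and to count the lowest Landau level by reducing the kernel equation to a holomorphic one. Write the magnetic momenta $\pi_{1}=-i\partial_{x_{1}}+\tfrac{x_{2}}{2}$ and $\pi_{2}=-i\partial_{x_{2}}-\tfrac{x_{1}}{2}$, so that $P_{R}^{\rm 2D}=\pi_{1}^{2}+\pi_{2}^{2}$ on the form domain $E_{R}$ of \eqref{ER}. Since $\curl{\bf A}_{0}=1$, a direct computation gives $[\pi_{1},\pi_{2}]=i$. Introducing $a=\tfrac{1}{\sqrt2}(\pi_{1}+i\pi_{2})$ and $a^{\dagger}=\tfrac{1}{\sqrt2}(\pi_{1}-i\pi_{2})$, one checks $[a,a^{\dagger}]=1$ and the factorization $P_{R}^{\rm 2D}=2a^{\dagger}a+1$. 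The boundary conditions in \eqref{ER} express invariance under the two magnetic translations $T_{Re_{1}},T_{Re_{2}}$ attached to ${\bf A}_{0}$; these commute with $\pi_{1},\pi_{2}$, hence with $a$ and $a^{\dagger}$, so $a$ and $a^{\dagger}$ map $E_{R}$ into itself. Moreover $T_{Re_{1}}$ and $T_{Re_{2}}$ commute with each other precisely when the flux $\tfrac{1}{2\pi}\int_{K_{R}}\curl{\bf A}_{0}=\tfrac{R^{2}}{2\pi}$ is an integer, which is exactly the standing hypothesis $R^{2}\in2\pi\N$; this is what makes $E_{R}$ well defined.

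Next I would extract the spectrum. The resolvent is compact, so the spectrum is discrete. Using $[a,a^{\dagger}]=1$ and the identity $a^{\dagger}a\,a=a(a^{\dagger}a-1)$, the standard oscillator argument shows that $\Spec(a^{\dagger}a)$ consists of nonnegative integers: if $a^{\dagger}a\psi=\lambda\psi$ with $\psi\neq0$ then $\lambda=\norm{a\psi}^{2}/\norm{\psi}^{2}\geq0$, and unless $\lambda=0$ the vector $a\psi\in E_{R}$ is again an eigenvector, now for $\lambda-1$, so descending forces $\lambda\in\{0,1,2,\dots\}$. Consequently $\Spec(P_{R}^{\rm 2D})\subseteq\{1,3,5,\dots\}$, the bottom eigenvalue is $1$ with eigenspace $L_{R}={\rm Ker}(a)={\rm Ker}(P_{R}^{\rm 2D}-1)$ (nonempty by the next paragraph), and the next eigenvalue is $3$; this is assertion (1). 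The gap estimate \eqref{eq:gap} follows at once: for $f\in E_{R}$, put $g=\Pi_{2}f$, which is orthogonal to $L_{R}={\rm Ker}(a)$. Then $\langle P_{R}^{\rm 2D}g,g\rangle=2\norm{ag}^{2}+\norm{g}^{2}$, and $\norm{ag}^{2}=\langle a^{\dagger}a\,g,g\rangle\geq\norm{g}^{2}$ because $a^{\dagger}a\geq1$ on $({\rm Ker}\,a)^{\perp}$, whence $\langle P_{R}^{\rm 2D}g,g\rangle\geq 3\norm{g}^{2}$.

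It remains to compute $\dim L_{R}$. I would solve $af=0$ explicitly. In the complex coordinate $z=x_{1}+ix_{2}$ one finds $a=-\sqrt2\,i\,e^{-|z|^{2}/4}\partial_{\bar z}\big(e^{|z|^{2}/4}\,\cdot\,\big)$, so that $af=0$ is equivalent to $\partial_{\bar z}(e^{|z|^{2}/4}f)=0$, i.e. ${\rm Ker}(a)=\{\,g(z)\,e^{-|z|^{2}/4}:g\ \text{entire}\,\}$. Substituting this form into the boundary conditions of \eqref{ER} turns them into quasi-periodicity relations of the type $g(z+R)=e^{\alpha(z)}g(z)$ and $g(z+iR)=e^{\beta(z)}g(z)$, which identify $L_{R}$ with a space of theta functions. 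Expanding $g$ in a Fourier series along the first period and imposing the second quasi-period yields a finite recursion whose solution space has dimension exactly equal to the flux $R^{2}/(2\pi)$; this proves $\dim L_{R}=R^{2}/(2\pi)$ and, in particular, $L_{R}\neq\{0\}$, which closes the loop with the previous paragraph.

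The algebraic part—the factorization, the oscillator ladder and the resulting gap—is routine, so the genuine difficulty is the dimension count: establishing that the quasi-periodic entire functions $g$ above span a space of dimension exactly $R^{2}/(2\pi)$. This is precisely the step where the flux-quantization hypothesis $R^{2}\in2\pi\N$ is indispensable, both for the compatibility of the two quasi-periods and for the count to be an integer, and I expect to carry it out either by invoking the classical dimension formula for theta functions of a given level or by pushing through the explicit Fourier/recursion argument just indicated.
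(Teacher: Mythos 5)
Your proposal is correct, but note that the paper contains no proof of this proposition at all: it declares the result classical and defers to \cite{AfSe} and \cite{Alm-CMP}, so there is no internal argument to match against, and what you have written is essentially the standard proof found in those references. Your ladder computations check out: with $\pi_{1}=-i\partial_{x_{1}}+\tfrac{x_{2}}{2}$, $\pi_{2}=-i\partial_{x_{2}}-\tfrac{x_{1}}{2}$ one indeed has $[\pi_{1},\pi_{2}]=i$ and $P_{R}^{\rm 2D}=2a^{\dagger}a+1$; since the magnetic translations commute with $\pi_{1},\pi_{2}$, the operators $a,a^{\dagger}$ preserve $E_{R}$, and together with compactness of the resolvent this gives $\Spec(P_{R}^{\rm 2D})\subseteq\{1,3,5,\dots\}$ and the gap estimate \eqref{eq:gap} exactly as you derive it. One small omission: to get that $3$ is actually an eigenvalue (the statement's ``$\mu_{1}(P_{R}^{\rm 2D})=3$'' is evidently a typo for $\mu_{2}(P_{R}^{\rm 2D})=3$), you should add one line observing that $a^{\dagger}$ maps $L_{R}$ injectively into $\ker(P_{R}^{\rm 2D}-3)$, since $\norm{a^{\dagger}u}^{2}=\norm{u}^{2}+\norm{au}^{2}>0$ for $0\neq u\in L_{R}$. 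The complex-coordinate identity $a=-\sqrt2\,i\,e^{-|z|^{2}/4}\partial_{\bar z}\big(e^{|z|^{2}/4}\,\cdot\,\big)$ is also correct, so $L_{R}$ is identified with a space of quasi-periodic entire functions times the Gaussian. The only place where you offer a sketch rather than a proof is the count $\dim L_{R}=R^{2}/2\pi$, which you delegate to the classical dimension formula for theta functions or an unexecuted Fourier recursion; that is precisely the step for which the paper cites \cite{AfSe, Alm-CMP}, so invoking it is legitimate, but a self-contained write-up would require carrying out the Fourier expansion along the first period and verifying that the second quasi-period forces an $N$-term recursion with an $N$-dimensional solution space, $N=R^{2}/2\pi$. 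Incidentally, your magnetic-translation reading implicitly (and correctly) repairs a typo in \eqref{ER}: the second condition should read $e^{-iRx_{1}/2}$, since otherwise the two quasi-periods are incompatible with the flux quantization $R^{2}\in2\pi\N$.
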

The next Lemma is a consequence of the existence of the spectral gap between the first two eigenvalues of $P_{R}^{\rm 2D}$. It is proved in \cite[Lemma~2.8]{FK-am}.
\begin{lem}\label{lem:op-2D}
Let $p\geq 2$.There exists a constant $C_{p}>0$ such that for any $\gamma \in (0,1/2)$, and $u\in D( P^{\rm 2D}_{R})$ satisfying
\begin{equation}\label{Asp-gam}
Q^{\rm 2D}_{R}(f)-(1+\gamma) \norm{f}^2_{L^{2}(K_{R})}\leq 0
\end{equation}
the following estimate holds:
\begin{equation}
\norm{u-\Pi_{1}u}_{L^{p}(K_{R})}\leq C_{p}\sqrt{\gamma}\norm{u}_{L^{2}(K_{R})}.
\end{equation}
Here $\Pi_{1}$ is the projection on the space $L_{R}$.
\end{lem}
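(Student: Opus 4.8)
The plan is to decompose $u$ along the spectral subspaces of $P_R^{\rm 2D}$ and treat the exponents $p=2$ and $p>2$ with different tools: the spectral gap furnishes the $L^2$ bound, while a \emph{scale-uniform} Gagliardo--Nirenberg inequality, fed by the diamagnetic inequality, upgrades it to $L^p$. Throughout I write $v=\Pi_2 u=u-\Pi_1 u$ and abbreviate $\|\cdot\|=\|\cdot\|_{L^2(K_R)}$. For the case $p=2$, I would first note that $\Pi_1 u\in L_R=\mathrm{Ker}(P_R^{\rm 2D}-1)$ and that the ranges of $\Pi_1,\Pi_2$ are $P_R^{\rm 2D}$-orthogonal, so that $Q_R^{\rm 2D}(u)=\|\Pi_1 u\|^2+\langle P_R^{\rm 2D}v,v\rangle\geq \|\Pi_1 u\|^2+3\|v\|^2$ by the gap estimate \eqref{eq:gap}. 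Inserting the hypothesis \eqref{Asp-gam} together with $\|u\|^2=\|\Pi_1 u\|^2+\|v\|^2$ gives $(2-\gamma)\|v\|^2\leq \gamma\|\Pi_1 u\|^2\leq \gamma\|u\|^2$, whence, using $\gamma<\tfrac12$, one obtains $\|v\|^2_{L^2}\leq \tfrac{2}{3}\gamma\|u\|^2_{L^2}$. This settles $p=2$ and records the smallness of $\|v\|_{L^2}$ needed below.

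Next I would observe that the \emph{magnetic energy} of $v$ is also $O(\gamma)$: from the same orthogonal splitting, $\langle P_R^{\rm 2D}v,v\rangle=Q_R^{\rm 2D}(u)-\|\Pi_1 u\|^2\leq (1+\gamma)\|u\|^2-(\|u\|^2-\|v\|^2)\leq \tfrac{5}{3}\gamma\|u\|^2$, so that $Q_R^{\rm 2D}(v)=\|(\nb-i{\bf A}_0)v\|^2_{L^2}\leq \tfrac53\gamma\|u\|^2$. The decisive structural point is that \emph{both} $\|v\|_{L^2}^2$ and the magnetic energy of $v$ are of order $\gamma\|u\|^2$. I would then pass to $|v|$ by the diamagnetic inequality, which yields $|v|\in H^1$ with $\|\nb|v|\|_{L^2}\leq \|(\nb-i{\bf A}_0)v\|_{L^2}$. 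Since the Bloch-type conditions defining $E_R$ in \eqref{ER} multiply by unimodular phases, the function $|v|$ is genuinely $R$-periodic and descends to the flat torus $\mathbb{T}_R=\R^2/(R\mathbb{Z})^2$, with $\|\,|v|\,\|^2_{H^1(\mathbb{T}_R)}=\|v\|^2_{L^2}+\|\nb|v|\|^2_{L^2}\leq C\gamma\|u\|^2$.

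To conclude I would apply the torus Gagliardo--Nirenberg inequality $\|\,|v|\,\|_{L^p(\mathbb{T}_R)}\leq C_p\,\|\,|v|\,\|_{L^2}^{2/p}\,\|\,|v|\,\|_{H^1}^{1-2/p}$ with $C_p$ \emph{independent of} $R$. Because $\|\,|v|\,\|_{L^2}$ and $\|\,|v|\,\|_{H^1}$ are both bounded by $C\sqrt\gamma\,\|u\|$, the two exponents add to $\tfrac2p+(1-\tfrac2p)=1$, and the product collapses to $\|v\|_{L^p}\leq C_p\sqrt\gamma\,\|u\|$. Since $\|u-\Pi_1 u\|_{L^p}=\|v\|_{L^p}$, this is exactly the asserted estimate (reading the $f$ in \eqref{Asp-gam} as $u$).

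The main obstacle is precisely the $R$-independence of the Gagliardo--Nirenberg constant: here $R^2\in 2\pi\N$ forces $R\to\infty$, and the scale-invariant planar inequality degenerates on a compact manifold. I would secure the uniform constant by a tiling argument. Cover $\mathbb{T}_R$ by $\sim R^2$ congruent squares $Q_j$ of side length in $[1,2]$ (possible since $R\geq \sqrt{2\pi}$), and on each fixed-size square apply the classical Gagliardo--Nirenberg inequality in the form $\|w\|_{L^p(Q_j)}^p\leq C_p\,\|w\|_{L^2(Q_j)}^2\,\|w\|_{H^1(Q_j)}^{\,p-2}$ with a constant depending only on $p$. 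Bounding $\|w\|_{H^1(Q_j)}\leq \|w\|_{H^1(\mathbb{T}_R)}$ (here $p\geq 2$ keeps the exponent nonnegative) and summing over $j$ using $\sum_j\|w\|_{L^2(Q_j)}^2=\|w\|_{L^2(\mathbb{T}_R)}^2$ gives, after taking the $p$-th root, the desired inequality with a constant depending only on $p$. Applying this to $w=|v|$ completes the argument.
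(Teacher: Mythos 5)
Your argument is correct. A remark first: the paper does not prove Lemma~\ref{lem:op-2D} itself---it cites \cite[Lemma~2.8]{FK-am}---but it does prove the $3D$ analogue, Lemma~\ref{lem:op-3D}, and your proof follows exactly that skeleton: orthogonal splitting $u=\Pi_1u+\Pi_2u$, the gap estimate \eqref{eq:gap} to bound both $\|\Pi_2u\|_{L^2}^2$ and the magnetic energy of $\Pi_2u$ by $C\gamma\|u\|_{L^2}^2$, the diamagnetic inequality to pass to $|\Pi_2u|$, and an $R$-uniform embedding to reach $L^p$. The only genuine divergence is the last step. You secure $R$-uniformity by observing that $|\Pi_2u|$ is honestly periodic (the phases in \eqref{ER} are unimodular), tiling the torus into unit-scale squares, applying Gagliardo--Nirenberg locally, and summing via $\sum_j\|w\|_{L^2(Q_j)}^2=\|w\|_{L^2}^2$; the paper's $3D$ proof instead multiplies by cut-off functions, extends to the whole space, and invokes the homogeneous Sobolev inequality to get \eqref{eq:op-3D-Sob}. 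Both are sound; your tiling route has the mild advantages that it covers every finite $p\geq2$ in two dimensions (the Sobolev-embedding route in $3D$ is capped at $p=6$) and that, since $\|\,|\Pi_2u|\,\|_{L^2}$ and $\|\,|\Pi_2u|\,\|_{H^1}$ are \emph{both} $O(\sqrt{\gamma}\,\|u\|_{L^2})$, the interpolation is not even needed---the same tiling gives the plain uniform bound $\|w\|_{L^p(\mathbb{T}_R)}\leq C_p\|w\|_{H^1(\mathbb{T}_R)}$ via $\ell^2\hookrightarrow\ell^p$ on the local $H^1$ norms, which already suffices.
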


\subsection{The Abrikosov energy}
We introduce the following energy functional (the Abrikosov energy):
\[
F_{R}(v)=\int_{K_{R}}\Big(  \frac{1}{2}|v|^{4}-|v|^{2}                          \Big)dx\,.
\]
The energy $F_{R}$ will be minimized on the space $L_{R}$, the (finite dimensional) eigenspace of the first eigenvalue of the periodic operator $P_{R}^{\rm 2D}$,
\[
L_{R}=\{u\in E_{R}\quad :\quad P_{R}^{\rm 2D}u=u\}\,.
\]
For all $R>0$, let
\begin{equation}\label{cR}
c(R)= \min \Big\{F_{R}(u)\quad:\quad u\in L_{R}\Big\}.
\end{equation}

The following theorem is proved in \cite{AfSe, FK-cpde}:

\begin{thm}\label{thm:Ab}
There exists a constant $E_{\rm Ab}\in [-1/2,0[$ such that
\[
E_{\rm Ab}= \lim_{\stackrel {R\rightarrow \infty}{R^{2}/2\pi\in \mathbb{N}}}\dfrac{c(R)}{R^{2}}=\lim_{b\to1_-}\frac{g(b)}{(b-1)^2}.
\]
\end{thm}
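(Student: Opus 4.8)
The plan is to build an algebraic bridge between the two quantities in the statement and then squeeze $g(b)/(b-1)^2$ between copies of $c(R)/R^2$. The bridge rests on one observation: if $u\in L_R$ then the magnetic energy equals the mass, $\|(\nabla-i{\bf A}_0)u\|_{L^2(K_R)}^2=\|u\|_{L^2(K_R)}^2$, since $P_R^{\rm 2D}u=u$. Hence, testing $G_{b,K_R}$ against a rescaled $tu$ gives $G_{b,K_R}(tu)=t^2(b-1)\|u\|_{L^2}^2+\tfrac12 t^4\|u\|_{L^4}^4$, and optimizing in $t>0$ yields $\min_{t>0}G_{b,K_R}(tu)=-\tfrac12(1-b)^2\|u\|_{L^2}^4/\|u\|_{L^4}^4$. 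The same rescaling applied to \eqref{cR} gives $c(R)=-\tfrac12\max_{u\in L_R}\|u\|_{L^2}^4/\|u\|_{L^4}^4$, so that $\min_{u\in L_R}\big[(b-1)\|u\|_{L^2}^2+\tfrac12\|u\|_{L^4}^4\big]=(1-b)^2c(R)$. This identity is what makes the reduced Ginzburg--Landau energy and the Abrikosov energy comparable.

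First I would establish the existence of $\lim_{R\to\infty}c(R)/R^2$. The structural fact is the inclusion $L_R\subseteq L_{NR}$ for every integer $N\ge1$: the quasi-periodicity relations in \eqref{ER} iterate, and $P_R^{\rm 2D}u=u$ is a local condition preserved under enlarging the box. Extending an $F_R$-minimizer periodically over the $N^2$ copies of $K_R$ that tile $K_{NR}$, and noting that only $|u|$ (which is $R$-periodic) enters $F_R$, gives $c(NR)\le N^2c(R)$, i.e. $c(NR)/(NR)^2\le c(R)/R^2$. Combined with the uniform lower bound $c(R)/R^2\ge-\tfrac12$ (immediate from Cauchy--Schwarz on $K_R$, $\|u\|_{L^2}^2\le R\,\|u\|_{L^4}^2$) and the analogous approximate subadditivity obtained by gluing two near-minimizers of distinct periods across a boundary layer (the cross and cut-off terms being lower order because the unimodular phases drop out of $|u|$), a standard thermodynamic-limit (subadditivity) argument yields convergence of $c(R)/R^2$ to $E_{\rm Ab}:=\inf_R c(R)/R^2$ along $R^2/2\pi\in\mathbb N$.

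Next I would prove the matching bounds for $g(b)/(b-1)^2$. For the upper bound I take an $F_R$-minimizer $v\in L_R$, tile its $R$-periodic modulus across $K_{NR}$, truncate in a boundary layer of area $O(NR)$ to make it admissible for the Dirichlet problem \eqref{eq:en-m0}, and optimize the amplitude as in the bridge; the boundary contribution is $O((1-b)NR)$ and so vanishes per unit area, whence $m_0(b,NR)/(NR)^2\to$ a bound giving $g(b)\le(1-b)^2 c(R)/R^2$ for every admissible $R$, and therefore $\limsup_{b\to1_-}g(b)/(b-1)^2\le E_{\rm Ab}$. For the lower bound I pass to the periodic ground-state energy on $K_L$, which shares the thermodynamic limit $g(b)$: dropping the quartic term shows that a minimizer $u$ satisfies the spectral hypothesis \eqref{Asp-gam} with $\gamma=(1-b)/b$, so Lemma~\ref{lem:op-2D} gives $\|u-\Pi_1u\|_{L^p(K_L)}\le C_p\sqrt{1-b}\,\|u\|_{L^2}$ for $p=2,4$. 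Writing $u=\Pi_1u+r$, using the gap estimate \eqref{eq:gap} to discard the positive contribution of $r$ to the magnetic energy and bounding the quartic cross-terms by the $L^4$-estimate on $r$, I obtain $G^{\rm per}_{b,K_L}(u)\ge(b-1)\|\Pi_1u\|_{L^2}^2+\tfrac12\|\Pi_1u\|_{L^4}^4-o\!\big((1-b)^2\big)L^2\ge(1-b)^2c(L)-o\!\big((1-b)^2\big)L^2$ by the bridge. Dividing by $L^2$, sending $L\to\infty$ and then $b\to1_-$ yields $\liminf_{b\to1_-}g(b)/(b-1)^2\ge E_{\rm Ab}$. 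The two bounds force $\lim_{b\to1_-}g(b)/(b-1)^2=E_{\rm Ab}=\lim_{R\to\infty}c(R)/R^2$, and $E_{\rm Ab}\in[-\tfrac12,0)$ is then read off directly from \eqref{bnd-on-g}.

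The main obstacle is the lower bound, i.e. the spectral-projection step. One must check that, after the natural normalization $\|u\|_{L^2}^2\sim(1-b)$ forced by negativity of the energy, every error term is genuinely $o\!\big((1-b)^2\big)$ per unit area and uniform in $L$, so that it survives the thermodynamic limit: this concerns the cross-terms $\int|\Pi_1u|^2|r|^2$, $\int|\Pi_1u|^3|r|$ and the like arising in expanding $\|u\|_{L^4}^4$, together with the remainder left after applying \eqref{eq:gap}. The two points demanding the most care are the uniformity in $R$ of the constant $C_p$ in Lemma~\ref{lem:op-2D} and the reconciliation of the Dirichlet and periodic reduced energies, both of which converge to $g(b)$ by \eqref{eq:g(b)} and the cited existence results.
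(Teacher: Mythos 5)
You should first note that the paper itself does not prove Theorem~\ref{thm:Ab}: it quotes the result from \cite{AfSe, FK-cpde}, so the benchmark is the standard proof in those references, whose skeleton (lowest-Landau-level projection plus an amplitude optimization) your proposal correctly reproduces. Your ``bridge'' is right and is exactly the mechanism behind \eqref{eq:Ka}: for $u\in L_R$ one has $\|(\nabla-i\Ab_0)u\|_{L^2(K_R)}^2=\|u\|_{L^2(K_R)}^2$, hence $\min_{t>0}G_{b,K_R}(tu)=-\tfrac12(1-b)^2\|u\|_{L^2}^4/\|u\|_{L^4}^4$ and $(1-b)^2c(R)=\min_{u\in L_R}\bigl[(b-1)\|u\|_{L^2}^2+\tfrac12\|u\|_{L^4}^4\bigr]$; your upper bound $g(b)\le(1-b)^2c(R)/R^2$, obtained by tiling the $R$-periodic modulus of an $F_R$-minimizer over $K_{NR}$ (using $L_R\subset L_{NR}$) and truncating, is sound and matches \eqref{eq:Ka} combined with \eqref{eq:m0gb}. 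Two remarks on your thermodynamic-limit step: the gluing of two near-minimizers of \eqref{cR} across a boundary layer is \emph{not} available, because membership in $L_R$ is a kernel constraint ($P_R^{\rm 2D}u=u$) that a cutoff destroys --- only the exact inclusion $L_R\subset L_{NR}$, giving $c(NR)\le N^2c(R)$, survives; but this defect would be harmless, since if both of your claimed bounds held, the squeeze itself would force $\limsup_{L}c(L)/L^2\le\liminf_{b\to1_-}g(b)/(b-1)^2\le\limsup_{b\to1_-}g(b)/(b-1)^2\le\inf_R c(R)/R^2$, which already yields existence of the limit.

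The genuine gap is in the lower bound, precisely at the point you flag but do not resolve: the error terms are \emph{not} $o\bigl((1-b)^2\bigr)$ per unit area uniformly in $L$. For a periodic minimizer one gets $\|u\|_{L^2}^2\le C(1-b)L^2$ and $\|u\|_{L^4}^4\le C(1-b)^2L^2$, while Lemma~\ref{lem:op-2D} gives $\|r\|_{L^4}\le C\sqrt{1-b}\,\|u\|_{L^2}\le C(1-b)L$ for $r=u-\Pi_1u$; this bound mixes $L^4$ and $L^2$ norms and is not scale-homogeneous, so $\|r\|_{L^4}^4\lesssim(1-b)^2L^2\cdot\bigl[(1-b)^2L^2\bigr]$ is negligible against the main quartic term $(1-b)^2L^2$ only when $L\ll(1-b)^{-1}$, and a cross term such as $\int|\Pi_1u|^3|r|\,dx\lesssim(1-b)^{5/2}L^{5/2}$ is, per unit area, of size $(1-b)^{5/2}L^{1/2}$, which blows up as $L\to\infty$ at fixed $b$. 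On the other hand, reconciling finite-box energies with $g(b)$ via \eqref{eq:m0gb} costs $C/R$ per unit area, negligible against $(1-b)^2$ only for $L\gg(1-b)^{-2}$ (or $L\gg(1-b)^{-1}$ with a refined boundary-layer estimate). These two constraints on $L$ are incompatible, so ``divide by $L^2$, send $L\to\infty$, then $b\to1_-$'' cannot close with the tools you invoke. The literature resolves this in one of two ways: either through the sup-norm bound \eqref{eq:2D-up-FK}, $\|u\|_{L^\infty}\le C(1-b)^{1/2}$, proved in 2D in \cite{FK-am} --- a hard theorem in its own right, and notably the very estimate this paper says is unavailable in 3D --- which makes all cross terms $O\bigl((1-b)^{5/2}L^2\bigr)$ uniformly in $L$; or through a two-scale partition argument, in which the projection lower bound is run on boxes of side length $\ll(1-b)^{-1}$ and summed over a partition of the large box, with partition-of-unity errors controlled separately --- exactly the architecture used in \cite{Ka-SIAM} and reproduced in Sections~4--5 of this paper (compare Corollary~\ref{corol:lb-op-l4} and the proof of Theorem~\ref{corol:Pi-Ab}). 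Without one of these ingredients, your remainder swamps the main term $(1-b)^2c(L)$ and the lower half of the squeeze fails.
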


We collect one more estimate from \cite[Prop.~3.1~\&~Thm.~3.5]{Ka-SIAM}. There exist two constants $C>0$ and $\epsilon_0\in(0,1)$ such that, for all $b\in(1-\epsilon_0,1)$ and $R\geq 2$,
\begin{equation}\label{eq:Ka}
m_0(b,R)\leq (1-b)^2c(R)+C(1-b)R\,.\end{equation}

\subsection{Three-dimensional limiting energy}
Let $b>0$, $\mathcal D$ be an open subset in $\R^{3}$ and  
\begin{equation}\label{R-GL-3}
\forall~u\in H^1(\mathcal D)\,,\quad F_{b,\mathcal D}(u)=\int_{\mathcal D}\Big( b |(\nb-i{\bf F})u|^{2}-|u|^{2}+\dfrac{1}{2}|u|^{4}\Big)dx\,,
\end{equation}
where ${\bf F}$ is the magnetic potential introduced in \eqref{MP-F}. For all $R>0$, we denote by
 $Q_{R}=K_{R}\times(-R/2,R/2)$ and
\begin{equation}\label{M0}
M_{0}(b,R)=\inf_{u\in H^{1}_{0}(Q_{R};\mathbb{C})}F_{b,Q_{R}}(u).
\end{equation}
The next lemma displays the connection between the two and three  dimensional ground state energies, $m_{0}(b,R)$ and  $M_{0}(b,R)$. It is taken from \cite[Theorem~2.14]{FK-cpde}.
\begin{lem}\label{Lem1}
There exists a universal constant $C>0$ such that, for all $b\geq 0$ and $R>0$, we have
\begin{equation}\label{Eq:m0M0}
Rm_{0}(b,R)\leq M_{0}(b,R)\leq (R-2)m_{0}(b,R)+C.
\end{equation}
\end{lem}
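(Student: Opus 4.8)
The plan is to prove the two inequalities separately, exploiting the fact that the three-dimensional magnetic potential $\mathbf{F}=(-x_2/2,x_1/2,0)$ is independent of $x_3$ and has vanishing third component, so that, writing $x'=(x_1,x_2)$, the magnetic gradient splits as $(\nb-i\mathbf{F})u=\big((\nb_{x'}-i\mathbf{A}_0)u,\partial_{x_3}u\big)$ and hence
\[
|(\nb-i\mathbf{F})u|^2=|(\nb_{x'}-i\mathbf{A}_0)u|^2+|\partial_{x_3}u|^2 .
\]
Thus $M_0(b,R)$ differs from $m_0(b,R)$ only through the extra $x_3$-direction, and the whole point is to account for this direction with an error that is uniform in $b$ and $R$.

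For the lower bound $Rm_0(b,R)\le M_0(b,R)$ I would take a minimizer $U\in H^1_0(Q_R)$ of $M_0(b,R)$, drop the non-negative term $b|\partial_{x_3}U|^2$, and slice. By Fubini's theorem the slice $U(\cdot,\cdot,x_3)$ lies in $H^1_0(K_R)$ for a.e.\ $x_3\in(-R/2,R/2)$, so that
\[
\int_{K_R}\Big(b|(\nb_{x'}-i\mathbf{A}_0)U|^2-|U|^2+\tfrac12|U|^4\Big)\,dx'\ge m_0(b,R).
\]
Integrating in $x_3$ gives $F_{b,Q_R}(U)\ge Rm_0(b,R)$, and taking the infimum over $U$ yields the claim. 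This half is clean and uses no fine information about minimizers.

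For the upper bound I would use a tensor trial state built from a minimizer $v$ of $m_0(b,R)$ on $K_R$. Standard truncation gives $\norm{v}_{L^\infty(K_R)}\le1$, and differentiating $t\mapsto G_{b,K_R}(tv)$ at $t=1$ yields the virial relations $b\norm{(\nb-i\mathbf{A}_0)v}_{L^2(K_R)}^2-\norm{v}_{L^2(K_R)}^2=-\norm{v}_{L^4(K_R)}^4$ and $m_0(b,R)=-\tfrac12\norm{v}_{L^4(K_R)}^4$. Choosing $\chi\in C^\infty_c(-R/2,R/2)$ with $0\le\chi\le1$, $\chi\equiv1$ on $(-R/2+1,R/2-1)$ and $|\chi'|\le C$, and setting $w(x',x_3)=\chi(x_3)v(x')\in H^1_0(Q_R)$, a direct computation (all spatial norms over $K_R$, all $x_3$-integrals over $(-R/2,R/2)$) gives
\[
F_{b,Q_R}(w)=\Big(\!\int\!\chi^2\Big)\big(b\norm{(\nb-i\mathbf{A}_0)v}^2-\norm{v}^2\big)+\tfrac12\Big(\!\int\!\chi^4\Big)\norm{v}_{L^4}^4+b\Big(\!\int\!(\chi')^2\Big)\norm{v}_{L^2}^2 .
\]
Since $\int\chi^2,\int\chi^4\in[R-2,R]$ and, by the virial identity, the first bracket equals $-\norm{v}_{L^4}^4$, the first two terms reproduce the main contribution $(R-2)m_0(b,R)$ plus errors supported in the two boundary slabs of $x_3$-width $1$.

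The hard part will be to control the remaining boundary-slab error and, above all, the $x_3$-derivative cost $b\big(\int(\chi')^2\big)\norm{v}_{L^2(K_R)}^2$ by a constant uniform in $b$ and $R$. This is genuinely delicate: because $\norm{v}_{L^2(K_R)}^2$ grows like $R^2$, a crude bound only yields an $O(R^2)$ error, and near $b=1$ one checks that $\norm{v}_{L^2(K_R)}^2\sim(1-b)R^2$ while the available negative bulk term $|m_0(b,R)|\sim(1-b)^2R^2$ is of smaller order, so no width-one cut-off can succeed. I would therefore leave the transition width $\ell$ free, balancing the bulk loss $\big((R-2)-\int\chi^2\big)|m_0(b,R)|$ against the derivative cost $\frac1\ell\norm{v}_{L^2}^2$, and exploit the Landau-level structure of $v$ near the critical field (via Lemma~\ref{lem:op-2D} and the Abrikosov energy $c(R)$ of \eqref{cR}) to absorb the positive error into the negative bulk terms. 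Matching the optimized estimate with the target $(R-2)m_0(b,R)+C$ uniformly, including the trivial regime where $b\,\mu_1(P^{\rm 2D}_R)\ge1$ forces $m_0=M_0=0$, is the technical core of the upper bound.
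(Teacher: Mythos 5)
Your lower bound is complete and correct (slicing plus dropping $b|\partial_{x_3}U|^2$ is exactly the right argument), but the upper bound is not proved: you stop at what you yourself call ``the technical core'', and the route you sketch for it cannot close. The obstruction is quantitative, not a matter of optimizing constants. With a transition of width $\ell$ in the $x_3$-cutoff, your tensor state $w=\chi(x_3)v(x')$ gives $F_{b,Q_R}(w)\leq (R-C\ell)\,m_0(b,R)+Cb\,\ell^{-1}\norm{v}^2_{L^2(K_R)}$. Now the virial identity $b\norm{(\nb-i\mathbf{A}_0)v}^2_{L^2}=\norm{v}^2_{L^2}-\norm{v}^4_{L^4}$ combined with $\norm{(\nb-i\mathbf{A}_0)v}^2_{L^2}\geq\norm{v}^2_{L^2}$ (the Dirichlet magnetic ground state energy is at least the field strength) forces $\norm{v}^2_{L^2(K_R)}\geq 2|m_0(b,R)|/(1-b)$, and by \eqref{eq:m0gb} and \eqref{bnd-on-g} one has $|m_0(b,R)|\asymp (1-b)^2R^2$ once $(1-b)^2R$ is large; hence $\norm{v}^2_{L^2(K_R)}\gtrsim (1-b)R^2$ there. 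Optimizing $\ell\sim(1-b)^{-1/2}$ then leaves an error of order $(1-b)^{3/2}R^2$, whereas the \emph{total} slack between the target $(R-2)m_0(b,R)+C$ and the trivial bound $Rm_0(b,R)$ is $-2m_0(b,R)+C=O\big((1-b)^2R^2\big)+C$, of strictly smaller order as $b\to1^-$. So within the family of tensor trial states no choice of width, and no ``absorption into the negative bulk terms'', can produce the stated $+C$; the mismatch $(1-b)^{3/2}$ versus $(1-b)^2$ reflects the genuinely one-dimensional Dirichlet healing layer at the faces $x_3=\pm R/2$, on which Lemma~\ref{lem:op-2D} gives no purchase: that lemma controls the transverse component $u-\Pi_1u$ for functions with small Rayleigh quotient and says nothing about the $x_3$-boundary cost, so invoking it together with the Abrikosov energy $c(R)$ of \eqref{cR} is a hope, not an argument.

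For the comparison you asked for: the paper contains no internal proof of this lemma at all --- it is imported verbatim from \cite[Theorem~2.14]{FK-cpde} --- so the honest benchmark is that reference, and your attempt stalls precisely at the half that makes the result nontrivial. Two mitigating remarks. First, your treatment of the degenerate regime is fine: for $b\geq1$ both $m_0$ and $M_0$ vanish, so only $b\in(0,1)$ is at stake. Second, the bound your construction \emph{does} yield after optimizing $\ell$, namely $M_0(b,R)\leq Rm_0(b,R)+C(1-b)^{3/2}R^2$, is weaker than \eqref{Eq:m0M0} but would in fact suffice for Lemma~\ref{Lem2} and for every subsequent use of the lemma in this paper, since after division by $R^3$ it contributes an error $O(1/R)$ uniformly in $b\in(0,1)$. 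As a proof of the statement as written, however, the upper bound is missing, and repairing it requires an ingredient genuinely different from the tensor-product ansatz.
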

Combining \eqref{eq:m0gb} and \eqref{Eq:m0M0}, we deduce the following lemma.
\begin{lem}\label{Lem2}
There exists a universal constant $C>0$ such that for all $R\geq 1$ and $b>0$,
\[
g(b)\leq \dfrac{M_{0}(b,R)}{R^{3}}\leq \dfrac{R-2}{R}g(b )+\dfrac{C}{R}.
\]
\end{lem}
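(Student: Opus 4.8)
The statement to prove is Lemma~\ref{Lem2}, which asserts that
$$
g(b)\leq \dfrac{M_{0}(b,R)}{R^{3}}\leq \dfrac{R-2}{R}g(b)+\dfrac{C}{R}
$$
for all $R\geq 1$ and $b>0$. The plan is to combine the two results quoted immediately before it, namely the sandwich inequality of Lemma~\ref{Lem1} relating $M_0$ and $m_0$, and the estimate \eqref{eq:m0gb} relating $m_0/R^2$ to $g(b)$. Both are stated in the excerpt and may be invoked freely, so the proof should be a short deterministic chain of inequalities rather than anything requiring a new idea.

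First I would divide the entire chain \eqref{Eq:m0M0} of Lemma~\ref{Lem1} by $R^3$, yielding
$$
\frac{m_0(b,R)}{R^2}\leq \frac{M_0(b,R)}{R^3}\leq \frac{R-2}{R}\cdot\frac{m_0(b,R)}{R^2}+\frac{C}{R^3}.
$$
For the lower bound I would then insert the left inequality of \eqref{eq:m0gb}, which gives $g(b)\leq m_0(b,R)/R^2$, immediately producing $g(b)\leq M_0(b,R)/R^3$. For the upper bound I would substitute the right inequality of \eqref{eq:m0gb}, namely $m_0(b,R)/R^2\leq g(b)+C/R$, into the rightmost term above. Since the prefactor $(R-2)/R$ is nonnegative for $R\geq 1$ (indeed for $R\geq 2$, and one checks the small-$R$ range separately or absorbs it into the constant), multiplying through preserves the inequality and gives
$$
\frac{M_0(b,R)}{R^3}\leq \frac{R-2}{R}\Bigl(g(b)+\frac{C}{R}\Bigr)+\frac{C}{R^3}=\frac{R-2}{R}g(b)+\frac{(R-2)C}{R^2}+\frac{C}{R^3}.
$$

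The only remaining task is bookkeeping of the error terms: I would bound $(R-2)C/R^2+C/R^3$ by $C'/R$ for a new universal constant $C'$, using that $(R-2)/R\leq 1$ and $1/R^2\leq 1/R$ for $R\geq 1$, and then relabel $C'$ as $C$ in accordance with the paper's convention that $C$ may change value. A minor caveat is that \eqref{eq:m0gb} is stated for $b\in[0,1]$, whereas Lemma~\ref{Lem2} claims all $b>0$; for $b\geq 1$ one has $g(b)=0$ and $M_0(b,R)\geq 0$ follows from $m_0(b,R)\geq 0$ via Lemma~\ref{Lem1}, so the lower bound is trivial there, and the upper bound follows since $F_{b,Q_R}$ evaluated at the trivial competitor $u=0$ gives $M_0(b,R)\leq 0$, forcing $M_0(b,R)=0$ and making both sides vanish up to the $C/R$ slack. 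I do not expect any genuine obstacle here; the entire argument is an elementary manipulation of the two imported estimates, and the main point of care is simply ensuring the constant in the final error term is declared universal and independent of $b$ and $R$.
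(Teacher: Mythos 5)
Your proposal is correct and follows essentially the same route as the paper, which deduces Lemma~\ref{Lem2} precisely by combining \eqref{eq:m0gb} with the sandwich inequality \eqref{Eq:m0M0} of Lemma~\ref{Lem1}. Your extra care about the sign of $(R-2)/R$ for $1\leq R<2$ and about extending from $b\in[0,1]$ to all $b>0$ (where $g(b)=0$ and $M_0(b,R)=0$) is exactly the routine bookkeeping the paper leaves implicit, and you handle it correctly.
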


As a consequence of Lemma~\ref{Lem2}, we may prove:

\begin{lem}\label{Lem:L4}
There exists a constant $C>0$,  such that, if  $b\in(0,1]$, $R>1$ and $v_{b,R}$ is a minimizer of $F_{b,Q_{R}} $ (i.e. $F_{b,Q_R}(v_{n,R})=M_0(b,R)$), then,
\begin{equation}
-2 R^{2}(R-2)g(b)-{C}R^{2}\leq \int_{Q_{R}}|v_{b,R}|^{4}dx\leq -2 R^{3}g(b )\,.\end{equation}
\end{lem}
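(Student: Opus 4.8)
The goal is to bound the $L^4$-norm of a minimizer $v_{b,R}$ of $F_{b,Q_R}$ both above and below, in terms of the quantity $M_0(b,R)\approx R^3 g(b)$ controlled by Lemma~\ref{Lem2}. The plan is to exploit the variational structure: a minimizer satisfies an Euler--Lagrange equation whose testing against $v_{b,R}$ produces an algebraic identity relating the kinetic energy, the $L^2$-norm, and the $L^4$-norm, which can then be combined with the energy value $M_0(b,R)$ itself.

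First I would record the two basic facts. On the one hand, $F_{b,Q_R}(v_{b,R})=M_0(b,R)$, i.e.
\begin{equation}\label{eq:L4-energy-identity}
b\int_{Q_R}|(\nb-i{\bf F})v_{b,R}|^2\,dx-\int_{Q_R}|v_{b,R}|^2\,dx+\frac12\int_{Q_R}|v_{b,R}|^4\,dx=M_0(b,R)\,.
\end{equation}
On the other hand, testing the Euler--Lagrange equation for $F_{b,Q_R}$ against $v_{b,R}$ (equivalently, differentiating $t\mapsto F_{b,Q_R}(tv_{b,R})$ at $t=1$) gives the Pohozaev-type relation
\begin{equation}\label{eq:L4-EL-identity}
b\int_{Q_R}|(\nb-i{\bf F})v_{b,R}|^2\,dx-\int_{Q_R}|v_{b,R}|^2\,dx+\int_{Q_R}|v_{b,R}|^4\,dx=0\,.
\end{equation}
Subtracting \eqref{eq:L4-EL-identity} from twice \eqref{eq:L4-energy-identity} eliminates both the kinetic and the quadratic term and yields the clean formula $\int_{Q_R}|v_{b,R}|^4\,dx=-2M_0(b,R)$. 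This exact identity is the heart of the argument.

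Once this identity is in hand, the stated two-sided bound follows by inserting Lemma~\ref{Lem2}. The upper estimate $\frac{M_0(b,R)}{R^3}\le \frac{R-2}{R}g(b)+\frac{C}{R}$ gives, after multiplying by $-2R^3$ and noting $g(b)\le 0$, the lower bound $\int_{Q_R}|v_{b,R}|^4\,dx=-2M_0(b,R)\ge -2R^2(R-2)g(b)-CR^2$. The lower estimate $g(b)\le \frac{M_0(b,R)}{R^3}$ gives, again multiplying by $-2R^3$, the upper bound $\int_{Q_R}|v_{b,R}|^4\,dx=-2M_0(b,R)\le -2R^3 g(b)$. This matches the claim exactly, so no error terms are lost.

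The one point demanding care is the justification of the Euler--Lagrange identity \eqref{eq:L4-EL-identity}: I must confirm that a minimizer of $F_{b,Q_R}$ over $H^1_0(Q_R;\Cb)$ exists and is a genuine critical point, so that the derivative of $F_{b,Q_R}(tv_{b,R})$ in the real scaling parameter $t$ vanishes at $t=1$. Existence is standard by the direct method (the functional is coercive on $H^1_0(Q_R)$ since the quartic term dominates, and weakly lower semicontinuous), and the scaling derivative is elementary because $F_{b,Q_R}(tv)=t^2\big(b\|(\nb-i{\bf F})v\|_2^2-\|v\|_2^2\big)+\tfrac12 t^4\|v\|_4^4$ is a smooth function of $t$; setting its $t$-derivative to zero at $t=1$ reproduces \eqref{eq:L4-EL-identity}. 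Thus the only mild subtlety is verifying this scaling stationarity, after which the result is a purely algebraic consequence of \eqref{eq:L4-energy-identity}, \eqref{eq:L4-EL-identity}, and Lemma~\ref{Lem2}.
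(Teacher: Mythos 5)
Your proof is correct and essentially the paper's own: the paper multiplies the Euler--Lagrange equation $-b(\nb-i\Fb)^2v_{b,R}=(1-|v_{b,R}|^2)v_{b,R}$ by $\overline{v_{b,R}}$ and integrates by parts to obtain exactly your identity $\int_{Q_R}|v_{b,R}|^4\,dx=-2M_0(b,R)$ (your scaling derivative $\frac{d}{dt}F_{b,Q_R}(tv_{b,R})\big|_{t=1}=0$ is the same computation), and then concludes via Lemma~\ref{Lem2} just as you do. One arithmetic quibble: twice the energy identity minus the Euler--Lagrange identity does \emph{not} eliminate the kinetic and quadratic terms (it yields $b\|(\nb-i\Fb)v_{b,R}\|_2^2-\|v_{b,R}\|_2^2=2M_0(b,R)$); the clean cancellation comes from subtracting the energy identity from the Euler--Lagrange identity, giving $\tfrac12\int_{Q_R}|v_{b,R}|^4\,dx=-M_0(b,R)$ directly, though your two identities together do imply the same conclusion.
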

\begin{proof}
The minimizer satisfies the following equation
\[
-b(\nb-i {\bf F})^{2} v_{b,R}=(1-|v_{b,R}|^{2})v_{b,R},
\]
with Dirichlet boundary conditions on the boundary of $Q_{R}$.

Multiplying the above equation by $\overline{v_{b,R}}$, integrating over $Q_{R} $ and performing an integration by parts, it follows that
\[
M_{0}(b,R)=-\dfrac{1}{2}\int_{Q_{R}}|v_{b,R}|^{4}dx.
\]
Now applying Lemma~\ref{Lem2} finishes the proof of Lemma~\ref{Lem:L4}.
\end{proof}

Now we establish a link between the ground state energy in \eqref{M0} and a non-linear eigenvalue problem. Such a relationship has been discovered in \cite{Ka-nf} in the two dimensional setting.

We define the linear functional
\begin{equation}\label{R-GL-3-lin}
F^{\rm lin}_{b,D}(u)=\int_{D}\Big( b |(\nb-i{\bf F})u|^{2}-|u|^{2}\Big)dx\,.
\end{equation}

We will minimize this functional in the space of functions satisfying
\[
\int_{Q_{R}}|u|^{4}dx=1.
\]

That way, we are led to introduce  the following ground state energy
\begin{equation}\label{matcalM}
\mathcal{M}_{0}(b,R)=\inf \Bigg\{     \dfrac{F^{\rm lin}_{b,D}(u)}  {\Big(\int_{Q_{R}}|u|^{4}dx\Big)^{1/2}}                 \quad:\quad u\in H^{1}_{0}(Q_{R})\setminus\{0\}\Bigg\}
\end{equation}
We aim to prove that
\begin{equation}
\displaystyle\lim_{R\rightarrow \infty}\dfrac{\mathcal{M}_{0}(b,R)}{R^{3/2}}= g_{\rm new}(b),
\end{equation}
where
\[
g_{\rm new}(b)=-\sqrt{-2g(b)}.
\]

Actually, it holds:

\begin{thm}\label{NF}
Let $b\in(0,1)$. There exist two constants $C>0$ and $R_0>1$ such that, for all $R\geq R_{0}$,
\begin{equation}
-(-2g(b))^{1/2}\leq  \dfrac{\mathcal{M}_{0}(b,R)}{R^{3/2}}\leq-\Big(\big(1-\dfrac{C}{R}\big)(-2g(b) )\Big)^{1/2}+\dfrac{C}{R}(-2g(b ))^{-1/2}.
\end{equation}
\end{thm}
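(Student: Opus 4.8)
The plan is to reduce the nonlinear eigenvalue quantity $\mathcal{M}_0(b,R)$ to the Ginzburg--Landau ground state energy $M_0(b,R)$ through a one-parameter scaling (fibering) argument, and then to invoke the two-sided bound on $M_0(b,R)$ furnished by Lemma~\ref{Lem2}. The central step is the exact identity
\[
\mathcal{M}_0(b,R) = -\big(-2M_0(b,R)\big)^{1/2},
\]
after which the theorem becomes a matter of elementary estimates on the square root. Note first that $M_0(b,R)<0$ for $b\in(0,1)$, since Lemma~\ref{Lem2} gives $M_0(b,R)/R^3\ge g(b)$ wait---rather $M_0(b,R)/R^3\le \tfrac{R-2}{R}g(b)+\tfrac{C}{R}$ and $g(b)<0$ by \eqref{bnd-on-g}, so negativity of $M_0$ is available for $R$ large.

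To obtain the identity, fix $u\in H^1_0(Q_R)$ and observe, with $\mathbf{F}$ as in \eqref{MP-F}, that
\[
F_{b,Q_R}(su) = s^2\, F^{\rm lin}_{b,Q_R}(u) + \frac{s^4}{2}\int_{Q_R}|u|^4\,dx,\qquad s>0 .
\]
When $F^{\rm lin}_{b,Q_R}(u)<0$ the right-hand side is minimized at $s^2=-F^{\rm lin}_{b,Q_R}(u)\big/\!\int_{Q_R}|u|^4\,dx$, with minimal value $-\tfrac12\big(F^{\rm lin}_{b,Q_R}(u)/\|u\|_{L^4(Q_R)}^2\big)^2$. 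Since $M_0(b,R)\le \min_{s>0}F_{b,Q_R}(su)$ for every such $u$, optimizing over $u$ and recalling the definition \eqref{matcalM} of $\mathcal{M}_0(b,R)$ yields $M_0(b,R)\le -\tfrac12\,\mathcal{M}_0(b,R)^2$. For the reverse inequality I would feed the nonlinear minimizer $v_{b,R}$ of $F_{b,Q_R}$ (Lemma~\ref{Lem:L4}) into the quotient: its Euler--Lagrange equation gives, after multiplication by $\overline{v_{b,R}}$ and integration, $F^{\rm lin}_{b,Q_R}(v_{b,R})=-\int_{Q_R}|v_{b,R}|^4\,dx$ and $M_0(b,R)=-\tfrac12\int_{Q_R}|v_{b,R}|^4\,dx$, whence the quotient \eqref{matcalM} evaluated at $v_{b,R}$ equals $-(-2M_0(b,R))^{1/2}$. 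This forces $\mathcal{M}_0(b,R)\le -(-2M_0(b,R))^{1/2}$, i.e. $-\tfrac12\,\mathcal{M}_0(b,R)^2\le M_0(b,R)$. The two bounds combine into the displayed identity, the sign being fixed by $M_0(b,R)<0$.

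With the identity in hand, write $A:=-2g(b)>0$ and apply Lemma~\ref{Lem2}, which gives $g(b)\le M_0(b,R)/R^3\le \tfrac{R-2}{R}g(b)+\tfrac{C}{R}$. The lower bound $M_0(b,R)/R^3\ge g(b)$ yields $-2M_0(b,R)/R^3\le A$, hence $\mathcal{M}_0(b,R)/R^{3/2}=-\big(-2M_0(b,R)/R^3\big)^{1/2}\ge -A^{1/2}$, which is the left inequality of the theorem. For the right inequality, the upper bound on $M_0(b,R)/R^3$ gives $-2M_0(b,R)/R^3\ge (1-\tfrac{2}{R})A-\tfrac{2C}{R}$; then the elementary estimate $\sqrt{x-y}\ge\sqrt{x}-y/\sqrt{x}$ (valid for $0\le y\le x$, from $\sqrt{1-t}\ge 1-t$), applied with $x=(1-\tfrac{2}{R})A$ and $y=\tfrac{2C}{R}$, together with $1-\tfrac{2}{R}\ge\tfrac12$ for $R$ large, converts this into
\[
\frac{\mathcal{M}_0(b,R)}{R^{3/2}}\le -\Big(\big(1-\tfrac{C}{R}\big)A\Big)^{1/2}+\frac{C}{R}\,A^{-1/2}
\]
after renaming constants. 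Choosing $R_0$ large enough (depending on $b$ through $A$) so that $x-y>0$ makes every square root legitimate.

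The main obstacle is the scaling identity rather than the final square-root manipulations: one must verify that the fibering optimization is genuinely attained and that $F^{\rm lin}_{b,Q_R}$ takes negative values on the relevant trial states, which rests on $g(b)<0$ and on the existence and Euler--Lagrange characterization of the minimizer $v_{b,R}$ recorded in Lemma~\ref{Lem:L4}. Once $\mathcal{M}_0(b,R)=-(-2M_0(b,R))^{1/2}$ is established, the theorem follows directly from Lemma~\ref{Lem2} and the monotonicity and concavity of the square root, with the $b$-dependence of $C$ and $R_0$ entering only to keep the argument of each square root positive.
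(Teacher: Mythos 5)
Your proof is correct, and it reaches the theorem through a stronger intermediate statement than the paper establishes: the exact identity $\mathcal{M}_0(b,R)=-\bigl(-2M_0(b,R)\bigr)^{1/2}$, valid once $M_0(b,R)<0$, which Lemma~\ref{Lem2} and \eqref{bnd-on-g} guarantee for $R\geq R_0(b)$. One half of your identity coincides with the paper's argument: the paper's upper bound also tests the quotient \eqref{matcalM} with the nonlinear minimizer $v_{b,R}$, except that it inserts the approximate bounds of Lemma~\ref{Lem:L4} and Lemma~\ref{Lem2} immediately, whereas you keep the exact Euler--Lagrange relations $F^{\rm lin}_{b,Q_R}(v_{b,R})=-\int_{Q_R}|v_{b,R}|^4\,dx$ and $M_0(b,R)=-\tfrac12\int_{Q_R}|v_{b,R}|^4\,dx$. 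The genuine divergence is in the other direction: the paper proves the lower bound by taking a minimizer $w_{b,R}$ of $\mathcal{M}_0(b,R)$, rescaling it to the specific normalization $\|w_{b,R}^{\ast}\|_{L^4(Q_R)}^2=R^{3/2}(-2g(b))^{1/2}$, and using $F_{b,Q_R}(w_{b,R}^{\ast})\geq M_0(b,R)\geq R^3g(b)$; your fibering step instead optimizes $F_{b,Q_R}(su)$ over all dilations $s>0$ of arbitrary trial states. This buys two things: it removes any need for a minimizer of the quotient to exist (for every $u$ with negative quotient $q(u)$ one has $q(u)\in[\mathcal{M}_0,0)$, so a minimizing sequence suffices --- a point the paper uses without justification), and it yields the identity at every finite $R$, not merely the asymptotic relation $g_{\rm new}(b)=-\sqrt{-2g(b)}$ noted after Theorem~\ref{NF}. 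In effect, the paper's normalization of $w_{b,R}^{\ast}$ is your optimal dilation with $M_0(b,R)$ replaced by its leading value $R^3g(b)$, so the two computations are close cousins, but yours is tighter and more transparent. The only point to make explicit in a polished write-up is the logical ordering, which you already flag: the fibering inequality $M_0(b,R)\leq-\tfrac12\,\mathcal{M}_0(b,R)^2$ presupposes $\mathcal{M}_0(b,R)<0$ and the existence of trial states with negative linear energy, so one should first deduce $M_0(b,R)<0$ for $R\geq R_0(b)$ from Lemma~\ref{Lem2}, then obtain $\mathcal{M}_0(b,R)\leq-(-2M_0(b,R))^{1/2}<0$ by testing with $v_{b,R}$, and only then run the fibering optimization; with that ordering the argument closes with no gap, and your square-root manipulations (via $\sqrt{x-y}\geq\sqrt{x}-y/\sqrt{x}$ for $0\leq y\leq x$) reproduce the stated constants after renaming.
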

In light of Theorem~\ref{NF}, we infer that
\[
g(b)=-\lim_{R\rightarrow \infty} \dfrac{1}{2}\Big(\displaystyle \lim_{R\rightarrow \infty}\dfrac{\mathcal{M}_{0}(b,R)}{R^{3/2}} \Big)^{2}.
\]

\begin{proof}[Proof of Theorem~\ref{NF}]~

{\bf Upper bound:}~

We will prove the following inequality
\begin{equation}\label{UB}
\mathcal{M}_0(b,R)\leq  -2({R-2}){R}^{1/2}(-2g(b ))^{1/2}+CR^{1/2}(-2g(b ))^{-1/2}
\end{equation}
valid for some universal constant $C$, for all $b\in(0,1)$ and $R$ sufficiently large.

Let $v_{b,R}$ be a minimizer of ${M}_{0}(b,R)$ for the Dirichlet boundary condition. Using the definition of $\mathcal{M}_0(b,R)$, we may write
\begin{align}
\nonumber F_{b,Q_{R}}(v_{b,R})&= M_{0}(b,R)\\
&\geq  \mathcal{M}_{0}(b,R)\left(\int_{Q_{R}}|v_{b,R}|^{4}dx\right)^{1/2}+\dfrac{1}{2}\int_{Q_{R}}|v_{b,R}|^{4}dx\,.
  \end{align}
By Lemma~\ref{Lem:L4}, we get for $R$ sufficiently large
$$
M_{0}(b,R)
 \geq   \mathcal{M}_{0}(b,R) \Big( -2 R^{2}(R-2)g(b)-{C}R^{2}\Big)_{+}^{1/2}+\dfrac{1}{2}\Big(-2 R^{2}(R-2)g(b)-{C}R^{2}\Big)\,.
$$
 We use Lemma \ref{Lem2} to estimate $M_{0}(b,R)$ from above. This finishes the proof of the upper bound in \eqref{UB}.

{\bf Lower bound:}

We will prove that for all $b\in(0,1)$ and $R>1$,
\begin{equation}\label{LB}
\mathcal{M}_{0}(b,R)\geq -R^{3/2}(-2g(b))^{1/2}.
\end{equation}
Let $ w_{b,R}$ be a minimizer of $\mathcal{M}_{0}(b,R) $. Let us normalize $w_{b,R}$ as follows
\[
w_{b,R}^{\ast}= \dfrac{  R^{3/4}(-2g(b ))^{1/4}}{\norm{w_{b,R}}_{L^{4}(Q_{R})}}w_{b,R}\,.
\]
The $L^{4}$- norm of $w_{b,R}$ satisfies
\[
\norm{w_{b,R}}_{L^{4}(Q_{R})}= {  R^{3/4}(-2g(b ))^{1/4}}.
\]
By definition of  $\mathcal{M}_{0}(b,R)$, we see that
\begin{equation}\label{Flin-F}
\mathcal{M}_{0}(b,R)= \dfrac{{F}^{\rm lin}_{b,Q_{R}}(w_{b,R})}{{\norm{w_{b,R}}^{2}_{L^{4}(Q_{R})}}}=R^{-3/2}(-2g(b ))^{-1/2} F^{\rm lin}_{b,Q_{R}}(w_{b,R}^{\ast}).
\end{equation}
We write
\begin{align}
\nonumber{F}^{\rm lin}_{b,Q_{R}}(w_{b,R}^{\ast})&={F}_{b,Q_{R}}(w_{b,R}^{\ast}) -\dfrac{1}{2} \int_{Q_{R}}|w_{b,R}^{\ast}|^{4}dx\\
\nonumber&= {F}_{b,Q_{R}}(w_{b,R}^{\ast})+R^{3}g(b)\\
\nonumber&\geq M_{0}(b,R)+R^{3}g(b)\\
\nonumber&\geq 2R^{3}g(b).
\end{align}
Note that in the last inequality, we used Lemma~\ref{Lem2} to write an upper bound for $M_0(b,R)$.

Now, inserting the inequality $\nonumber{F}^{\rm lin}_{b,Q_{R}}(w_{b,R}^{\ast})\geq 2R^3g(b)$  in \eqref{Flin-F}, we obtain \eqref{LB}.
\end{proof}

\section{Proof of Theorem~\ref{thm:op-l4}}\label{sec:l4}

In the sequel, we will work with the following  local energy
\begin{equation}\label{E0}
\mathcal{E}_{0}(\psi,{\bf A};D)= \int_{D}\left(  |(\nb-i\kp H{\bf A})\psi|^{2}-\kp^{2}|\psi|^{2}+\frac{\kp^{2}}{2}|\psi|^{4}                             \right)dx\quad (D\subset\Omega)\,.
\end{equation}

We collect various a priori estimates that are useful in the proof of Theorem~\ref{thm:op-l4} (cf. \cite[Chapter~10]{FH-b}). 

\begin{lem}\label{Lem:est}
If $(\psi,{\bf A})$ is a solution of \eqref{E-GL}, then
\begin{equation}
\norm{\psi}_{\infty}\leq 1,
\end{equation}
\begin{equation}\label{grad-est}
\norm{(\nb -i\kp H{\bf A})\psi}_{C^{1}(\overline\Omega)}\leq C_{1}\sqrt{\kp H}\norm{\psi}_{L^{\infty}}\,,
\end{equation}
and
\begin{equation}\label{curl-est}
\norm{\curl(\Ab-\Fb)}_{C^{1}(\overline\Omega)}\leq \frac{C_{1}}{H}\norm{\psi}_{L^{\infty}}\norm{\psi}_{L^2(\Omega)}\,.
\end{equation}
\end{lem}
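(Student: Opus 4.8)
These three bounds are classical elements of the elliptic theory for the Ginzburg-Landau system and are contained in \cite[Chapter~10]{FH-b}; the plan is to recall the mechanism behind each, treating them in the order stated. First I would prove $\norm{\psi}_{\infty}\leq1$ by a maximum principle applied to the scalar function $|\psi|$. Where $\psi\neq0$, the diamagnetic (Kato) inequality gives $\Delta|\psi|\geq {\rm Re}\big(\tfrac{\bar\psi}{|\psi|}(\nb-i\kp H\Ab)^{2}\psi\big)$, and substituting the first equation of \eqref{E-GL} turns this into $\Delta|\psi|\geq \kp^{2}(|\psi|^{2}-1)|\psi|$. At an interior maximum $x_0$ with $|\psi(x_0)|>1$ the left-hand side is $\leq0$ while the right-hand side is $>0$, a contradiction; on $\partial\Omega$ the boundary condition $\nu\cdot(\nb-i\kp H\Ab)\psi=0$ gives $\nu\cdot\nb|\psi|=0$ (the remaining term $i\kp H(\nu\cdot\Ab)\psi$ is purely imaginary relative to $\bar\psi$ and drops out of ${\rm Re}$), so Hopf's lemma excludes a boundary maximum with $|\psi|>1$. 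Together these yield $\norm{\psi}_{\infty}\leq1$.

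Next, for \eqref{grad-est} I would rescale to the magnetic length $(\kp H)^{-1/2}$. Fixing $x_0\in\overline\Omega$ and setting $y=\sqrt{\kp H}\,(x-x_0)$, the first equation of \eqref{E-GL} becomes a magnetic Schr\"odinger equation with an $O(1)$ field and right-hand side $\tfrac{\kp}{H}(1-|\psi|^{2})\psi$, which is bounded because $H\approx\kp$ and $\norm{\psi}_\infty\leq1$. Interior and boundary Schauder ($L^p$) estimates for the magnetic Laplacian then control the rescaled magnetic gradient by $\norm{\psi}_\infty$, and undoing the scaling produces the factor $\sqrt{\kp H}$; the elliptic bootstrap delivers the stated $C^1$ control. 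The boundary contribution requires flattening $\partial\Omega$ while preserving the Neumann condition, which is where the smoothness of $\partial\Omega$ is used.

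Finally, for \eqref{curl-est} I would use the second equation of \eqref{E-GL}. Writing ${\bf j}=-\tfrac{1}{\kp H}{\rm Im}\big(\bar\psi(\nb-i\kp H\Ab)\psi\big)$ and noting $\curl^{2}\Fb=0$ (since $\curl\Fb=\beta$ is constant), the equation reads $\curl^{2}(\Ab-\Fb)={\bf j}\,{\bf 1}_{\Omega}$, i.e. $-\Delta(\Ab-\Fb)={\bf j}\,{\bf 1}_{\Omega}$ on $\R^{3}$ because $\Div(\Ab-\Fb)=0$. Combining the pointwise gradient bound \eqref{grad-est} with $\norm{\psi}_\infty$ and the Cauchy-Schwarz inequality gives $\norm{{\bf j}}_{L^2}\leq \tfrac{C}{\sqrt{\kp H}}\norm{\psi}_\infty\norm{\psi}_{L^2(\Omega)}\leq \tfrac{C}{H}\norm{\psi}_\infty\norm{\psi}_{L^2(\Omega)}$, using $H\approx\kp$. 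The Biot-Savart representation, together with Calder\'on-Zygmund/Newtonian-potential estimates for $-\Delta$ and the smoothness of $\psi$ (hence of ${\bf j}$), then upgrades this to the claimed $C^1$ bound on $\curl(\Ab-\Fb)$.

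The hard part will be the quantitative regularity feeding \eqref{grad-est} and \eqref{curl-est}: tracking the exact powers of $\kp H$ through the rescaling and verifying that the constants remain uniform up to $\partial\Omega$. The maximum-principle step is robust, but the boundary elliptic bootstrap---flattening $\partial\Omega$, preserving the Neumann condition, and handling the jump of ${\bf j}\,{\bf 1}_{\Omega}$ across $\partial\Omega$ when differentiating the induced field---is the delicate point; this is exactly the content of \cite[Chapter~10]{FH-b}, which I would invoke for the fully rigorous statements.
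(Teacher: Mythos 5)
The paper gives no proof of this lemma: it simply collects these a priori estimates from \cite[Chapter~10]{FH-b}. Your outline (maximum principle via the Kato inequality for $\norm{\psi}_{\infty}\leq 1$, blow-up to the magnetic length scale plus elliptic/Schauder estimates for \eqref{grad-est}, and elliptic regularity for $-\Delta(\Ab-\Fb)=\mathbf{j}\,{\bf 1}_{\Omega}$ with the pointwise bound on the supercurrent and $H\approx\kappa$ for \eqref{curl-est}) faithfully reproduces the standard arguments behind that citation, and you correctly defer the genuinely delicate points (boundary flattening, the transmission problem across $\partial\Omega$) to the same reference, so this is essentially the paper's approach.
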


\begin{lem}\label{lem:gauge}
There exist positive constants $C$ and $\kappa_{0}$ such that if
\[
\kappa\geq\kappa_0\,,\quad\Lambda_{\rm min}\leq \frac{H}{\kappa}\leq \Lambda_{\rm max},
\]
and  $(\psi,{\bf A})$ is a solution of \eqref{E-GL}, then the following is true. 

Let $\ell\in(0,1)$ and $Q_{\ell}\subset\Omega$ be a cube of side length $\ell$, then there exists a function $\phi\in C^\infty(\overline{Q_\ell})$ such that, for all $x\in Q_{\ell}$, we have
\begin{equation}\label{gaugeAF}
|{\bf A}(x)-{\bf F}(x)-\phi(x)|\leq C\dfrac{\lambda^{1/6}}{\kp}\ell,
\end{equation}
where
\[
\lambda=\max\left(\frac1\kappa,\left(1-\frac{H}{\kappa}\right)^2\right).
\]
\end{lem}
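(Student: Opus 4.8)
The plan is to reduce the statement to two ingredients: a quantitative a priori bound on $\norm{\psi}_{L^2(\Omega)}$, and a gauge-fixing (Poincar\'e-type) construction on the cube. First I would note that the gauge-invariant quantity $\curl(\Ab-\Fb)=\curl\Ab-\beta$ is already controlled by \eqref{curl-est}: using $\norm{\psi}_\infty\le 1$ from Lemma~\ref{Lem:est} and $H\approx\kp$, one gets
$$\norm{\curl(\Ab-\Fb)}_{L^\infty(\Omega)}\le \frac{C}{H}\norm{\psi}_{L^2(\Omega)}\le \frac{C}{\kp}\norm{\psi}_{L^2(\Omega)}.$$
Thus everything hinges on showing $\norm{\psi}_{L^2(\Omega)}\le C\lambda^{1/6}$, equivalently $\int_\Omega|\psi|^2\,dx\le C\lambda^{1/3}$; the factor $\lambda^{1/6}$ in the conclusion comes solely from this bound, the rest of the estimate being purely geometric.

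To obtain the $L^2$ bound I would test the first equation in \eqref{E-GL} against $\overline\psi$ and integrate by parts, the Neumann condition removing the boundary term, which yields
$$\int_\Omega|(\nb-i\kp H\Ab)\psi|^2\,dx=\kp^2\int_\Omega|\psi|^2\,dx-\kp^2\int_\Omega|\psi|^4\,dx.$$
The left-hand side is bounded below by a magnetic lower bound. Since \eqref{curl-est} together with the trivial estimate $\norm{\psi}_{L^2(\Omega)}\le|\Omega|^{1/2}$ already gives $\norm{\curl\Ab-\beta}_\infty=O(\kp^{-1})$, the field has strength close to $1$, so the magnetic Laplacian has spectral bottom $\approx\kp H$ in the bulk (the lowest Landau level), and a localization away from $\partial\Omega$ produces a lower bound of order $\kp H\int_\Omega|\psi|^2$ up to a boundary-layer correction. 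Inserting this, absorbing the quartic term, and closing with Cauchy--Schwarz $\int_\Omega|\psi|^2\le|\Omega|^{1/2}\big(\int_\Omega|\psi|^4\big)^{1/2}$, one reaches $\int_\Omega|\psi|^2\le C\max\big(1-\tfrac{H}{\kp},\kp^{-1}\big)\le C\lambda^{1/3}$. The delicate point, and the main obstacle, is exactly the boundary-layer correction: because surface superconductivity survives at $H\approx H_{C_2}$ the order parameter is not small near $\partial\Omega$, and the global Neumann constant $\Theta_0\kp H<\kp H$ is too weak; one must use IMS localization and feed in the known surface a priori estimates so that the correction stays of size $\kp^{-1}$ rather than the cruder $\kp^{-1/2}$. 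This $L^2$ estimate is essentially a standard a priori bound for solutions of \eqref{E-GL} in this field regime, and I would either invoke it from the literature or establish it along the route above.

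With $\eta:=\norm{\curl(\Ab-\Fb)}_{L^\infty(\Omega)}\le C\lambda^{1/6}/\kp$ in hand, it remains to gauge-fix on the cube. Fix $Q_\ell$ of side $\ell$ and center $x_0$, and set $\mathbf{a}=\Ab-\Fb$, so that $\Div\mathbf{a}=0$ and $\mathbf{B}:=\curl\mathbf{a}$ satisfies $\Div\mathbf{B}=0$ and $\norm{\mathbf{B}}_{L^\infty(Q_\ell)}\le\eta$. Since $Q_\ell$ is star-shaped about $x_0$, the explicit Poincar\'e formula $\mathbf{a}_1(x)=\int_0^1 t\,\mathbf{B}\big(x_0+t(x-x_0)\big)\times(x-x_0)\,dt$ gives $\curl\mathbf{a}_1=\mathbf{B}$ and the pointwise bound $|\mathbf{a}_1(x)|\le\tfrac12|x-x_0|\,\norm{\mathbf{B}}_{L^\infty(Q_\ell)}\le C\ell\,\eta$. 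The residual field $\mathbf{a}-\mathbf{a}_1$ is curl-free on the simply connected cube, hence equals $\nb\phi$ for a scalar $\phi$; elliptic regularity for \eqref{E-GL} makes $\Ab$, and therefore $\phi$, smooth on $\overline{Q_\ell}$. Consequently $\Ab-\Fb-\nb\phi=\mathbf{a}_1$ and
$$\norm{\Ab-\Fb-\nb\phi}_{L^\infty(Q_\ell)}=\norm{\mathbf{a}_1}_{L^\infty(Q_\ell)}\le C\ell\,\eta\le C\,\frac{\lambda^{1/6}}{\kp}\,\ell,$$
which is the assertion of the lemma, the gauge term $\phi$ of the statement being $\nb\phi$ here. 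The only genuinely hard step is the a priori $L^2$ bound above; the curl estimate and the Poincar\'e reconstruction are routine, the latter being arranged precisely so that it contributes the factor $\ell$.
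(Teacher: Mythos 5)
Your gauge-fixing half is correct, and it is genuinely different from what the paper does. The paper's entire proof is two lines: it quotes \cite[Corollary~4.4]{FK-jmpa}, which asserts $\norm{\Ab-\Fb}_{C^{1,1/2}(\overline{\Omega})}\leq C\kappa^{-1}\lambda^{1/6}$, and then takes the \emph{linear} phase $\phi(x)=(\Ab(x_0)-\Fb(x_0))\cdot(x-x_0)$, so that \eqref{gaugeAF} (read with $\nabla\phi$ in place of $\phi$, as you correctly interpret the statement) is nothing but the mean value theorem on the cube, using the $C^1$ control of $\Ab-\Fb$ itself. Your transversal-gauge construction via the Poincar\'e formula
$$\mathbf{a}_1(x)=\int_0^1 t\,\Bb\big(x_0+t(x-x_0)\big)\times(x-x_0)\,dt$$
is sound (the cube is convex, $\Bb=\curl(\Ab-\Fb)$ is automatically divergence free, and $|\mathbf{a}_1|\leq\tfrac12|x-x_0|\norm{\Bb}_\infty$), and it has the structural advantage of needing only $\norm{\curl(\Ab-\Fb)}_{L^\infty}$ rather than a $C^1$ bound on $\Ab-\Fb$; the price is a small regularity bootstrap to get $\phi\in C^\infty(\overline{Q_\ell})$, which is trivial for the paper's linear $\phi$.

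The genuine gap is in the other half: the entire analytic content of the lemma is the factor $\lambda^{1/6}$, i.e.\ the a priori bound $\norm{\psi}_{L^2(\Omega)}\leq C\lambda^{1/6}$ feeding \eqref{curl-est}, and this you do not prove. Your spectral sketch stalls exactly where you say it does: after testing the equation against $\overline\psi$, the bulk lower bound $\approx\kappa H$ for the magnetic form fails in a boundary layer where $|\psi|$ is of order one; a plain IMS localization at scale $\kappa^{-1/2}$ produces only a correction of size $\kappa^{-1/2}$, and improving it to $\kappa^{-1}$ requires Agmon-type surface decay estimates that you invoke but never supply. Moreover, the bound you assert, $\int_\Omega|\psi|^2\leq C\max\big(1-\tfrac{H}{\kappa},\kappa^{-1}\big)$, is \emph{sharper} than what the 3D literature provides --- the paper itself stresses (see the discussion around \eqref{eq:2D-up-FK}) that the 2D techniques yielding this kind of sharp control are not available in three dimensions; what is available is precisely the FK-jmpa estimate with the weaker exponent $\lambda^{1/6}$ (equivalently $\norm{\psi}_{L^2}^2\lesssim\lambda^{1/3}$), which is however all the lemma needs. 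So the honest way to close your argument is to cite the very result the paper cites, at which point your Poincar\'e gauge becomes a somewhat longer route to the same conclusion; as written, with the citation left unspecified and the spectral sketch incomplete at the boundary layer, the key step is a genuine gap rather than a proof.
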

\begin{proof}
In \cite[Corollary~4.4]{FK-jmpa}, it is proved that $\|\Ab-\Fb\|_{C^{1,1/2}(\overline{\Omega})}\leq C\kappa^{-1}\lambda^{1/6}$. The conclusion in 
Lemma~\ref{lem:gauge} follows by taking $\phi(x)=(\Ab(x_0)-\Fb(x_0))\cdot(x-x_0)$ where $x_0$ is the center of the square $Q_\ell$.
\end{proof}

\begin{proof}[Proof of Theorem~\ref{thm:op-l4}]
Let $\sigma\in(0,1)$ and $Q_{\kp,\sigma}$ be the cube having the same center as $Q_{\kp}$ but with side length $(1+\sigma){\kp}^{-1/2}$. Let $\chi\in C^{\infty}_{c}(Q_{\kp,\sigma})$ be a cut-off function satisfying, for all $\kappa\geq 1$,
\[
\chi=1 \quad{\rm in}\quad Q_{\kp}, \quad 0\leq \chi\leq 1 \quad {\rm and}\quad |\nb \chi|\leq C \sigma^{-1/2}{\kappa}^{1/2}\quad{    \rm in      }\quad Q_{\kp,\sigma}.
\]
An integration by parts and the first equation in \eqref{E-GL} yield the following localization formula
\begin{equation}\label{eq:bnd1}
\mathcal{E}_{0}(\chi\psi,{\bf A};Q_{\kappa,\sigma})= \kp^{2}\int_{Q_{\kp,\sigma}}\chi^{2}\Big(-1+\frac{1}{2}\chi^{2}\Big)|\psi|^{4}dx+ \int_{Q_{\kp,\sigma}} |\nb \chi|^{2}|\psi|^{2}dx
\leq C\sigma^{-1}\kappa^{-1/2}.
\end{equation}
Note  that we have used that the term $(-1+\frac{1}{2}\chi^{2})$ is negative, the bound on $|\nabla \chi|$ and that $|Q_{\kappa,\sigma}|\leq C {\kappa}^{-3/2}$.
Let us introduce the following  linear energy
\[
\mathcal{L}_{0,\kp}(\chi\psi,{\bf A})= \int_{Q_{\kp,\sigma}}\left(|(\nb -i\kp H{\bf A})\chi\psi|^{2}-\kappa^{2}|\chi\psi|^{2}\right)dx\,.
\]
Let $\phi$ be the function satisfying \eqref{gaugeAF} in $Q_{\kp,\sigma}$ (i.e. with $\ell=(1+\sigma)\kappa^{-1/2}$). Using the Cauchy-Schwarz inequality, we write,
\begin{equation}
\begin{aligned}
\mathcal{L}_{0,\kp}(\chi\psi,{\bf A})&= \mathcal{L}_{0,\kp}(e^{-i\kp H\phi}\chi\psi,{\bf A}-\nb \phi)\\
&\geq \int_{Q_{\kp,\sigma}}\left[(1-\kp^{-1/2})|(\nb-i\kp H{\bf F})e^{-i\kp H\phi} \chi\psi|^{2}-\kp^{2}|\chi \psi|^{2}-C{\kappa}^{1/2} H^2\lambda^{1/3} \ell^{2}|\chi \psi|^{2}\right]dx\,.
\end{aligned}
\end{equation}
Using the expression of $\lambda$ in Lemma~\ref{lem:gauge} and the assumption on $H$ in Theorem~\ref{thm:op-l4}, we get
\begin{multline}
\mathcal{L}_{0,\kp}(\chi\psi,{\bf A})= \mathcal{L}_{0,\kp}(e^{-i\kp H\phi}\chi\psi,{\bf A}-\nb \phi)\\
\geq \int_{Q_{\kp,\sigma}}\left[(1-\kp^{-1/2})|(\nb-i\kp H{\bf F})e^{-i\kp H\phi} \chi\psi|^{2}-\kp^{2}|\chi \psi|^{2}-C\kappa^{3/2}|\chi \psi|^{2}\right]dx\,.
\end{multline}

Let $b=(1-\kp^{-1/2})\frac{H}{\kp}$, and $R=\ell\sqrt{\kappa H}$ and $x_{\kp} $ the center of the square $Q_{\kp,\sigma}$. Apply the change of variables $y=\sqrt{\kp H}(x-x_{\kp})$ to get
\[
\mathcal{L}_{0,\kp}(\chi\psi,{\bf A})\geq {\kappa^{5/4}H^{-3/4}} {\mathcal M}_{0}(b,R) \norm{\chi \psi}^{2}_{4}-C{\kappa}^{3/2} \|\chi \psi\|_{2}^{2},
\]
where  $\mathcal{M}_{0}(b,R)$ is the energy introduced in \eqref{matcalM}. We use Theorem~\ref{NF} to write a lower bound of $\mathcal M_0(b,R)$ and  H\"older inequality to estimate $\|\chi\psi\|_2$. That way we get,
\[
\mathcal{L}_{0,\kp}(\chi\psi,{\bf A})\geq -{\kappa^{5/4}H^{-3/4}} R^{3/2}(-2g(b))^{1/2} \norm{\chi \psi}^{2}_{4}-C{\kappa}^{3/4}\|\chi \psi\|_{4}^{2}.
\]
Recall that $\ell=(1+\sigma)\kappa^{-1/2}$ is the side length of the cube $Q_{\kp,\sigma}$ and that $R=\ell \sqrt{\kp H}=(1+\sigma)\sqrt{H}$. Note that
\[
\begin{aligned}
\mathcal{E}_{0}(\chi\psi,{\bf A};Q_{\kappa,\sigma})&= \mathcal{L}_{0,\kp}(\chi\psi,{\bf A})+\dfrac{\kp^{2}}{2}\norm{\chi\psi}^{4}_{4}\\
&\geq -\kappa^{5/4}(1+\sigma)^{3/2}(-2g(b))^{1/2} \norm{\chi \psi}^{2}_{4}-C{\kappa}^{3/4}\|\chi \psi\|_{4}^{2}+\dfrac{\kp^{2}}{2}\norm{\chi\psi}^{4}_{4}\,.
\end{aligned}
\]
We insert this into \eqref{eq:bnd1} to get
%
\begin{equation}\label{Eq:bef-split}
\kp^{5/4}\left( -(1+\sigma)^{3/2}(-2g(b))^{1/2} -C{\kappa}^{-1/2} +\dfrac{\kappa^{3/4}}{2} \|\chi \psi\|_{4}^{2}\right)\|\chi \psi\|_{4}^{2}\leq C\sigma^{-1}\kp^{-1/2}.
\end{equation}
Two cases may occur~:

{\bf Case~I:}
\begin{equation*}
\left( -(1+\sigma)^{3/2}(-2g(b))^{1/2} -C{\kappa}^{-1/2} +\dfrac{\kappa^{3/4}}{2} \|\chi \psi\|_{4}^{2}\right)\leq \kp^{-1/2}
\end{equation*}

{\bf Case~II:}
\begin{equation*}
\left( -(1+\sigma)^{3/2}(-2g(b))^{1/2} -C{\kappa}^{-1/2} +\dfrac{\kappa^{3/4}}{2} \|\chi \psi\|_{4}^{2}\right)\geq \kp^{-1/2}.
\end{equation*}

In both cases, we infer from \eqref{Eq:bef-split},
\begin{equation}
\|\chi \psi\|_{4}^{2}\leq (1+\sigma)^{3/2}\kp^{-3/4} (-2g(b))^{1/2}+C\sigma^{-1}\kp^{-5/4}.
\end{equation}
Since $\chi=1$ in $Q_{\kp}\subset Q_{\kp,\sigma}$ and $|Q_\kappa|=\kappa^{-3/2}$, it follows that
 \begin{equation}\label{eq:bnd2}
\left(\frac1{|Q_\kappa|}\int_{Q_{\kp}}|\psi|^{4}dx\right)^{1/2}\leq (1+\sigma)^{3/2} (-2g(b))^{1/2}+C\sigma^{-1}\kp^{-1/2}.
\end{equation}
This yields the conclusion in Theorem~\ref{thm:op-l4} once we choose $\sigma=\left[\left(1-\frac{H}\kappa\right)\kappa^{1/2}\right]^{-1/2}$. In fact, Assumption~\ref{assump} ensures that
\begin{itemize}
\item $\sigma\ll1$ and $\sigma^{-1}\kappa^{-1/2}\ll 1-\frac{H}\kappa$\,;
\item $b=(1-\kappa^{-1/2})\frac{H}\kappa\to 1_-$ so that by Theorem~\ref{thm:Ab}, 
$g(b)=E_{\rm Ab}(b-1)^2+(b-1)^2o(1)$.
\end{itemize}
%
\end{proof}

We will need to work with  boxes rather than cubes only. These boxes are defined in: 

\begin{definition}\label{defn:box}
Let $0<\ell,L<1$. By a $(\ell,L)$ box we mean a cuboid of the form 
$$Q_{\ell,L}=(-\ell/2,\ell/2)\times(-\ell/2,\ell/2)\times (-L/2,L/2)+x_0\,,$$
for some point $x_0\in\R^3$ (the center of the box).
\end{definition}

Note that, a $(\ell,L)$ box for which $L=\ell$ is simply a cube of side length $\ell$.

\begin{rem}\label{rem:op-l4}
As a simple corollary of Theorem~\ref{thm:op-l4}, there exist  two constants $C>0$ and $\kappa_0>0$ such that the following estimate
$$\int_{Q_{\ell,L}}|\psi|^4\,dx\leq C\ell^2L\left(1-\frac{H}{\kappa}\right)^2\,,$$
is valid as long as Assumption~\ref{assump} is satisfied and
\begin{itemize}
\item $\kappa\geq \kappa_0$\,;
\item $ \kappa^{-1/2}\leq \ell,L<1$\,;
\item $Q_{\ell,L}\subset\{{\rm dist}(x,\partial\Omega)\geq 2\kappa^{-1/2}\}$ is a $(\ell,L)$-box.
\end{itemize}
Furthermore, it holds,
\begin{equation}\label{eq:op-l4*}
\limsup_{\kappa\to\infty}\left(\left(1-\frac{H}{\kappa}\right)^{-2}\frac{1}{|Q_{\ell,L}|}\int_{Q_{\ell,L}}|\psi|^4\,dx\right)\leq -2 E_{\rm Ab}\,.
\end{equation}
\end{rem}

\begin{corollary}\label{corol:op-l4}
Under the assumptions in Theorem~\ref{thm:op-l4}, 
\begin{equation}\label{eq:op-l4**}
\limsup_{\kappa\to\infty}\left(\left(1-\frac{H}{\kappa}\right)^{-2}\frac{1}{|D|}\int_{D}|\psi|^4\,dx\right)\leq -2 E_{\rm Ab}\,,
\end{equation}
where $D\subset\Omega$ is an open subset such that $|\partial D|=0$.
\end{corollary}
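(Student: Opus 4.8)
The plan is to tile $\R^3$ by the lattice of closed cubes of side length $\kappa^{-1/2}$ and to estimate $\int_D|\psi|^4\,dx$ cube by cube, grouping the cubes according to their position relative to $\partial D$ and $\partial\Omega$. Write $D=U_\kappa\cup R_\kappa$, where $U_\kappa$ is the union of those lattice cubes $Q$ with $Q\subset D\cap\{{\rm dist}(x,\partial\Omega)>2\kappa^{-1/2}\}$ (the ``good'' cubes) and $R_\kappa=D\setminus U_\kappa$. The good cubes are pairwise disjoint and contained in $D$, so $|U_\kappa|\le|D|$, and Theorem~\ref{thm:op-l4} applies on each of them with the \emph{same} error function ${\rm err}(\kappa)$. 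Summing the bound \eqref{eq:l4-KN-main} over the good cubes, and using that for $\kappa$ large the coefficient $-2E_{\rm Ab}+{\rm err}(\kappa)$ is positive (so $|U_\kappa|$ may be enlarged to $|D|$), gives
\begin{equation*}
\int_{U_\kappa}|\psi|^4\,dx\le\sum_{Q\subset U_\kappa}\int_Q|\psi|^4\,dx\le\bigl(-2E_{\rm Ab}+{\rm err}(\kappa)\bigr)\Bigl(1-\tfrac{H}{\kappa}\Bigr)^2|D|.
\end{equation*}
Thus the contribution of $U_\kappa$ already yields the desired bound $-2E_{\rm Ab}$ after dividing by $|D|(1-H/\kappa)^2$; it remains to show that the relative contribution of $R_\kappa$ tends to $0$.

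The remainder splits as $R_\kappa\subset A_\kappa\cup B_\kappa$, where $A_\kappa$ is the union of the lattice cubes that meet $\partial D$ while staying inside $\{{\rm dist}(x,\partial\Omega)>2\kappa^{-1/2}\}$, and $B_\kappa=D\cap\{{\rm dist}(x,\partial\Omega)\le 3\kappa^{-1/2}\}$ is a boundary layer of $\Omega$. On $A_\kappa$ each cube is a $(\kappa^{-1/2},\kappa^{-1/2})$-box lying in the admissible region of Remark~\ref{rem:op-l4}, whence $\int_Q|\psi|^4\,dx\le C\kappa^{-3/2}(1-H/\kappa)^2$ and, after summation,
\begin{equation*}
\int_{A_\kappa}|\psi|^4\,dx\le C\Bigl(1-\tfrac{H}{\kappa}\Bigr)^2|A_\kappa|.
\end{equation*}
Since every point of $A_\kappa$ lies within $\sqrt3\,\kappa^{-1/2}$ of $\partial D$, the sets $A_\kappa$ decrease as $\kappa\to\infty$ to the closed null set $\partial D$, so continuity of the measure gives $|A_\kappa|\to|\partial D|=0$; hence the relative contribution $(1-H/\kappa)^{-2}|A_\kappa|/|D|$ of $A_\kappa$ tends to $0$. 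This is precisely where the hypothesis $|\partial D|=0$ is used.

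The genuinely delicate term is the layer $B_\kappa$ near $\partial\Omega$, where Theorem~\ref{thm:op-l4} is unavailable and surface superconductivity makes $|\psi|$ of order one, so that the crude estimate $|\psi|\le1$ only yields $\int_{B_\kappa}|\psi|^4\le|B_\kappa|=O(\kappa^{-1/2})$, which is \emph{not} $o((1-H/\kappa)^2)$ under Assumption~\ref{assump} alone. This is the main obstacle. To overcome it I would invoke the Agmon-type concentration estimates valid in this regime, which confine $\psi$ to a surface layer of width $O(\kappa^{-1})$ and give exponential decay of $|\psi|$ into the bulk; combined with $|\psi|\le1$ they produce
\begin{equation*}
\int_{B_\kappa}|\psi|^4\,dx\le\int_{B_\kappa}|\psi|^2\,dx=O(\kappa^{-1})+O\Bigl(\bigl(1-\tfrac{H}{\kappa}\bigr)\kappa^{-1/2}\Bigr),
\end{equation*}
the first term coming from the surface layer and the second from the bulk portion of $B_\kappa$. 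Assumption~\ref{assump} forces $\kappa^{-1}=o((1-H/\kappa)^2)$ and $\kappa^{-1/2}=o(1-H/\kappa)$, so both terms are $o((1-H/\kappa)^2)$ and the relative contribution of $B_\kappa$ tends to $0$ as well.

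Collecting the three estimates gives
\begin{equation*}
\Bigl(1-\tfrac{H}{\kappa}\Bigr)^{-2}\frac1{|D|}\int_D|\psi|^4\,dx\le-2E_{\rm Ab}+{\rm err}(\kappa)+o(1),
\end{equation*}
and passing to the $\limsup$ yields \eqref{eq:op-l4**}. The decomposition and the summation over interior cubes are routine once the uniform error of Theorem~\ref{thm:op-l4} and the box bound of Remark~\ref{rem:op-l4} are in hand; the only step demanding input beyond the present excerpt is the control of the layer $B_\kappa$, which I expect to be the real difficulty and which rests on the concentration of $\psi$ near $\partial\Omega$.
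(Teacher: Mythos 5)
Your tiling scheme for the main term and for $\partial D$ is sound, and it is essentially the argument the paper leaves implicit (the corollary is stated without proof, immediately after Remark~\ref{rem:op-l4}): you correctly exploit the uniformity of ${\rm err}(\kappa)$ in Theorem~\ref{thm:op-l4} over admissible cubes, the sign $-2E_{\rm Ab}>0$ to enlarge $|U_\kappa|$ to $|D|$, the box bound of Remark~\ref{rem:op-l4} on the cubes meeting $\partial D$, and continuity from above of the Lebesgue measure to get $|A_\kappa|\to|\partial D|=0$ \textemdash{} which is indeed where the hypothesis $|\partial D|=0$ enters. Note also that when $\overline{D}\subset\Omega$ your first two steps already finish the proof, since $B_\kappa=\emptyset$ for $\kappa$ large, and that the shell $\{2\kappa^{-1/2}\le{\rm dist}(x,\partial\Omega)\le 3\kappa^{-1/2}\}$ is still covered by Remark~\ref{rem:op-l4}; the genuinely inaccessible region is only $\{{\rm dist}(x,\partial\Omega)<2\kappa^{-1/2}\}$.

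The gap is your treatment of that region. The Agmon-type concentration you invoke holds in the surface superconductivity regime $H_{C_2}<H<H_{C_3}$, where the bulk is normal; here $H<\kappa$, i.e.\ below $H_{C_2}$, the bulk is superconducting with $|\psi|\approx(1-H/\kappa)^{1/2}$ throughout, and there is \emph{no} exponential decay of $\psi$ away from $\partial\Omega$. Indeed, if $\psi$ were confined to an $O(\kappa^{-1})$ boundary layer, then $\int_D|\psi|^2$ would be $O(\kappa^{-1})=o(1-H/\kappa)$ for every interior $D$, contradicting \eqref{eq:l2-3D-main} (equivalently \eqref{eq:Ka-2D-main}). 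What your bound $\int_{B_\kappa}|\psi|^2=O(\kappa^{-1})+O\big((1-H/\kappa)\kappa^{-1/2}\big)$ actually requires is the pointwise estimate $|\psi|\le C(1-H/\kappa)^{1/2}$ at distance $\gg\kappa^{-1}$ from $\partial\Omega$, i.e.\ precisely the $3D$ analogue of \eqref{eq:2D-up-FK} \textemdash{} and the introduction states explicitly that this is not known in $3D$ (its absence is the paper's declared reason for the stronger Assumption~\ref{assump'} in Theorem~\ref{thm:op-l2-main}). So this step cannot be carried out as written. As you yourself compute, the crude bound $|\psi|\le 1$ gives $\int_{B_\kappa}|\psi|^4=O(\kappa^{-1/2})$, which closes the argument only when $1-H/\kappa\gg\kappa^{-1/4}$; in the remaining range $\alpha(\kappa)\kappa^{-1/2}\le 1-H/\kappa\lesssim\kappa^{-1/4}$ permitted by Assumption~\ref{assump}, and for $D$ whose closure meets $\partial\Omega$, your proof is incomplete and would need a genuinely new input controlling $\psi$ in the $O(\kappa^{-1/2})$-layer along $\partial\Omega$ (an $L^4$ estimate up to the boundary in the spirit of \cite{Alm-3D}, say), which does not follow from the ingredients you used.
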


\section{Energy asymptotics}\label{sec:en}

In the sequel, we will work with the local energy introduced in \eqref{E0}. Also, we will use the notation introduced below.

\begin{notation}\label{not:Q-l}
For every $\ell\in(0,1)$, we let $Q_\ell\subset\Omega$ be a cube of side length $\ell$ and $\chi_{\ell}\in C^{\infty}_{c}(Q_{\ell})$ be a cut-off function satisfying
\begin{equation}\label{eq:chi-l}
  \chi_{\ell} =1\quad {\rm in}\quad  Q_{\ell-\frac{1}{\sqrt{\kp H}}},\quad 0\leq \chi_{\ell}\leq 1\,,\quad |\nb \chi_{\ell}|\leq c\sqrt{\kp H}\quad{\rm and}\quad |\Delta \chi_{\ell}|\leq c^2\kp H\quad {\rm in}\quad Q_{\ell},
\end{equation}
where $c>0$ is a universal constant.
\end{notation}

\begin{proposition}\label{prop-1}
There exist two constants $\kappa_0>1$ and $C>0$ such that the following inequalities holds
\begin{align*}
\dfrac{(1-\delta)}{|Q_\ell|}&\mathcal E_0(\chi_\ell\psi e^{i\kappa H\phi},\Fb;Q_\ell)\\
&\leq \dfrac{1}{|Q_{\ell}|} \mathcal{E}_{0}(\chi_\ell\psi,{\bf A};Q_{\ell})+ C\Big(\delta\kappa+\delta^{-1}\kappa^{1/3}\ell^2[\kappa-H]^{2/3}\Big)[\kp-H]\\
&\leq 
\dfrac{1}{|Q_{\ell}|} \mathcal{E}_{0}(\psi,{\bf A};Q_{\ell})
+ C\Big(\ell^{-1/2}\kappa^{1/2}+\delta\kappa+\delta^{-1}\kappa^{1/3}\ell^2[\kappa-H]^{2/3}\Big)[\kp-H]\,,
\end{align*}
where
\begin{itemize}
\item $\delta\in(0,1)$, $\kappa\geq\kappa_0$, and $(\kappa,H)$ satisfy Assumption~\ref{assump}\,;
\item $(\psi, {\bf A})\in H^{1}(\Omega ;\mathbb{C}) \times \dot{H}^{1}_{\ddiv,{\bf F}}(\R^{3})$ is a solution  of  \eqref{E-GL}\,;
\item $\kappa^{-1/2}\leq\ell<1$, $Q_\ell$ and $\chi_\ell$ are as in Notation~\ref{not:Q-l}\,;
\item $\Fb$ is the magnetic potential introduced in \eqref{MP-F}\,;
\item $\phi\in C^\infty(\overline{Q_\ell})$ is the smooth function in Lemma~\ref{lem:gauge}\,.
\end{itemize} 
\end{proposition}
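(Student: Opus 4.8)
The plan is to prove the two inequalities separately and then chain them. Observe first that the configurations $(\chi_\ell\psi\,e^{i\kappa H\phi},\Fb)$ and $(\chi_\ell\psi,\Ab)$ have pointwise equal moduli, so the quadratic and quartic terms in $\mathcal E_0$ coincide; the first inequality is therefore purely a statement about the magnetic kinetic energy, while the second is a localization (IMS-type) estimate comparing $\chi_\ell\psi$ with $\psi$. The only a priori information I use on $\psi$ is the $L^4$ bound of Remark~\ref{rem:op-l4}, namely $\int_{Q_\ell}|\psi|^4\,dx\le C\ell^3(1-\tfrac H\kappa)^2$, together with its H\"older consequence $\int_{Q_\ell}|\psi|^2\,dx\le |Q_\ell|^{1/2}\big(\int_{Q_\ell}|\psi|^4\big)^{1/2}\le C\ell^3\,\tfrac{[\kappa-H]}{\kappa}$. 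I also use Assumption~\ref{assump}, which forces $\lambda=(1-\tfrac H\kappa)^2$ and $H\approx\kappa$, hence $\kappa H\approx\kappa^2$ and $\lambda^{1/3}=[\kappa-H]^{2/3}\kappa^{-2/3}$.

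For the first inequality I would use gauge invariance of the magnetic energy: the gauge factor $e^{i\kappa H\phi}$ gives $\int_{Q_\ell}|(\nb-i\kappa H\Fb)(\chi_\ell\psi e^{i\kappa H\phi})|^2 = \int_{Q_\ell}|(\nb-i\kappa H\,\widetilde{\Fb})\chi_\ell\psi|^2$, where $\widetilde{\Fb}$ differs from $\Fb$ by $\mp\nb\phi$ and, by Lemma~\ref{lem:gauge} (with the appropriate sign convention for $\phi$), satisfies $\|\widetilde{\Fb}-\Ab\|_{L^\infty(Q_\ell)}\le C\lambda^{1/6}\ell/\kappa$. Writing $(\nb-i\kappa H\widetilde{\Fb})u=(\nb-i\kappa H\Ab)u-i\kappa H(\widetilde{\Fb}-\Ab)u$ with $u=\chi_\ell\psi$ and applying Young's inequality $|x+y|^2\le(1+\tfrac{\delta}{1-\delta})|x|^2+(1+\tfrac{1-\delta}{\delta})|y|^2$, the prefactor $1-\delta$ on the left is exactly what absorbs the $\delta$-multiple of the (uncontrolled) kinetic energy $\int|(\nb-i\kappa H\Ab)u|^2$, leaving
\[
(1-\delta)\,\mathcal E_0(\chi_\ell\psi e^{i\kappa H\phi},\Fb;Q_\ell)\le \mathcal E_0(\chi_\ell\psi,\Ab;Q_\ell)+\delta\kappa^2\!\int_{Q_\ell}\!|\psi|^2+\delta^{-1}\kappa^2H^2\!\int_{Q_\ell}\!|\widetilde{\Fb}-\Ab|^2|\psi|^2.
\]
Inserting $|\widetilde{\Fb}-\Ab|\le C\lambda^{1/6}\ell/\kappa$, the bound $\int_{Q_\ell}|\psi|^2\le C\ell^3[\kappa-H]/\kappa$, and $H\approx\kappa$, $\lambda^{1/3}=[\kappa-H]^{2/3}\kappa^{-2/3}$, the two remainders become $C\delta\kappa\,\ell^3[\kappa-H]$ and $C\delta^{-1}\kappa^{1/3}\ell^5[\kappa-H]^{2/3}[\kappa-H]$; dividing by $|Q_\ell|=\ell^3$ yields precisely $C(\delta\kappa+\delta^{-1}\kappa^{1/3}\ell^2[\kappa-H]^{2/3})[\kappa-H]$.

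For the second inequality I would expand the difference $\mathcal E_0(\psi,\Ab;Q_\ell)-\mathcal E_0(\chi_\ell\psi,\Ab;Q_\ell)$. Using $(\nb-i\kappa H\Ab)(\chi_\ell\psi)=(\nb\chi_\ell)\psi+\chi_\ell(\nb-i\kappa H\Ab)\psi$ and $\mathrm{Re}(\bar\psi(\nb-i\kappa H\Ab)\psi)=\tfrac12\nb|\psi|^2$, then integrating by parts the cross term (legitimate since $\chi_\ell$ is compactly supported in $Q_\ell$ and $\psi\in H^2_{\rm loc}$), the $|\nb\chi_\ell|^2|\psi|^2$ contributions cancel and one is left with
\[
\mathcal E_0(\psi,\Ab;Q_\ell)-\mathcal E_0(\chi_\ell\psi,\Ab;Q_\ell)=\int_{S}(1-\chi_\ell^2)|(\nb-i\kappa H\Ab)\psi|^2+\tfrac{\kappa^2}2\int_S(1-\chi_\ell^4)|\psi|^4+\int_S\chi_\ell\Delta\chi_\ell|\psi|^2-\kappa^2\!\int_S(1-\chi_\ell^2)|\psi|^2,
\]
where $S=Q_\ell\setminus Q_{\ell-1/\sqrt{\kappa H}}$ is the shell on which $\chi_\ell\ne1$. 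The first two terms are non-negative and may be discarded, and the last two are bounded below by $-(c^2\kappa H+\kappa^2)\int_S|\psi|^2\ge -C\kappa^2\int_S|\psi|^2$ using $|\Delta\chi_\ell|\le c^2\kappa H$. Since $|S|\approx\ell^2/\sqrt{\kappa H}\approx\ell^2/\kappa$, H\"older and Remark~\ref{rem:op-l4} give $\int_S|\psi|^2\le|S|^{1/2}\big(\int_{Q_\ell}|\psi|^4\big)^{1/2}\le C\ell^{5/2}\kappa^{-3/2}[\kappa-H]$, whence $\mathcal E_0(\chi_\ell\psi,\Ab;Q_\ell)\le\mathcal E_0(\psi,\Ab;Q_\ell)+C\ell^{5/2}\kappa^{1/2}[\kappa-H]$; dividing by $\ell^3$ produces the extra term $C\ell^{-1/2}\kappa^{1/2}[\kappa-H]$, and adding the common errors chains the two inequalities.

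The main obstacle is that in $3D$ there is no pointwise bound $\|\psi\|_{L^\infty(Q_\ell)}\lesssim(1-\tfrac H\kappa)^{1/2}$ (the very gap discussed after Theorem~\ref{thm:op-l2-main}), so every $L^2$ mass integral must be routed through the $L^4$ estimate of Remark~\ref{rem:op-l4} via H\"older, which costs a factor $|Q_\ell|^{1/2}$ or $|S|^{1/2}$ and fixes the exponents above. The second delicate point is that the magnetic kinetic energy of $\chi_\ell\psi$ is itself not controlled a priori, so in the gauge step it cannot be estimated outright; the role of the prefactor $1-\delta$ is precisely to absorb the unavoidable $\delta$-multiple of this kinetic energy produced by Young's inequality. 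The remaining work is the careful bookkeeping of the powers of $\kappa$, $\ell$, $\delta$ and $[\kappa-H]$, together with checking via Assumption~\ref{assump} that $\lambda=(1-\tfrac H\kappa)^2$, so that the $\lambda^{1/6}$ of Lemma~\ref{lem:gauge} turns into the stated $\kappa^{1/3}[\kappa-H]^{2/3}$.
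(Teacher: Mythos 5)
Your proposal is correct and follows essentially the same route as the paper's own proof: the paper's Step~1 is exactly your IMS localization (integration by parts cancelling the $|\nabla\chi_\ell|^2$ terms, shell contributions controlled by the $L^4$ bound of Remark~\ref{rem:op-l4} via H\"older), and its Step~2 is your gauge transformation with Lemma~\ref{lem:gauge} plus the Cauchy--Schwarz/Young step in which the $(1-\delta)$ prefactor absorbs the kinetic term. Your bookkeeping of the error exponents, including the identification $\lambda=(1-\tfrac{H}{\kappa})^2$ under Assumption~\ref{assump}, matches the paper's computation exactly.
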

\begin{proof}~

{\bf Step~1: Lower bound on $\mathcal{E}_{0}(\psi,\Ab;Q_{\ell})$.}
The aim of this step is to prove the estimate in \eqref{eq:decomp2} below.
Since $\chi_\ell=1$ in $Q_{\ell-\frac1{\sqrt{\kappa H}}}$, it holds the simple decomposition
\begin{equation}\label{eq:decomp1}
\mathcal E_0(\chi_\ell\psi,\Ab;Q_\ell)=\mathcal E_0(\psi,\Ab;Q_{\ell-\frac1{\sqrt{\kappa H}}})+\mathcal E_0(\chi_\ell\psi,\Ab;Q_\ell\setminus Q_{\ell-\frac1{\sqrt{\kappa H}}})\,.
\end{equation}
Straight forward calculations yield
\begin{align*}
\int_{Q_\ell\setminus Q_{\ell-\frac1{\sqrt{\kappa H}}}}&|(\nabla-i\kappa H\Ab)\chi_\ell\psi|^2\,dx\\
&=\int_{Q_\ell\setminus Q_{\ell-\frac1{\sqrt{\kappa H}}}}|\chi_\ell(\nabla-i\kappa H\Ab)\psi|^2\,dx+\int_{Q_\ell\setminus Q_{\ell-\frac1{\sqrt{\kappa H}}}}|\nabla\chi_\ell|^2|\psi|^2\,dx\\
&\quad+2{\rm Re}\left\{\int_{Q_\ell\setminus Q_{\ell-\frac1{\sqrt{\kappa H}}}}\chi_\ell\overline{\psi}\nabla\chi_\ell\cdot(\nabla-i\kappa H\Ab)\psi\,dx\right\}\\
&=\int_{Q_\ell\setminus Q_{\ell-\frac1{\sqrt{\kappa H}}}}|\chi_\ell(\nabla-i\kappa H\Ab)\psi|^2\,dx-\int_{Q_\ell\setminus Q_{\ell-\frac1{\sqrt{\kappa H}}}}|\psi|^2\chi_\ell\Delta\chi_\ell\,dx\,.
\end{align*}
We insert the estimates in  Remark~\ref{rem:op-l4} into the aforementioned formula to obtain
$$
\int_{Q_\ell\setminus Q_{\ell-\frac1{\sqrt{\kappa H}}}}|(\nabla-i\kappa H\Ab)\chi_\ell\psi|^2\,dx\leq
\int_{Q_\ell\setminus Q_{\ell-\frac1{\sqrt{\kappa H}}}}|(\nabla-i\kappa H\Ab)\psi|^2\,dx
+C\ell^{-1/2}\kappa^{1/2}[\kappa-H]\ell^3\,.
$$
We insert this into \eqref{eq:decomp2}. After a rearrangement of the terms we get
$$
\mathcal E_0(\chi_\ell\psi,\Ab;Q_\ell)\leq\mathcal E_0(\psi,\Ab;Q_\ell)+\kappa^2\int_{Q_\ell}(1-\chi_\ell^2)|\psi|^2\,dx
+C\ell^{-1/2}\kappa^{1/2}[\kappa-H]\ell^3\,.
$$
We estimate the term $\displaystyle\int_{Q_\ell}(1-\chi_\ell^2)|\psi|^2\,dx$ using the assumption on the support of $1-\chi_\ell$, the Cauchy-Schwarz inequality and the estimate in Remark~\ref{rem:op-l4}. That way we get
\begin{equation}\label{eq:decomp2}
\mathcal E_0(\chi_\ell\psi,\Ab;Q_\ell)\leq\mathcal E_0(\psi,\Ab;Q_\ell)+C\ell^{-1/2}\kappa^{1/2}[\kappa-H]\ell^3\,.
\end{equation}

{\bf Step~2: Replacing $\Ab$ by $\Fb$.}

Let $\phi\in C^\infty(\overline{Q_\ell})$ be the function satisfying the estimate in \eqref{gaugeAF}. Using the gauge invariance and the Cauchy-Schwarz inequality, we get 
\begin{multline*}
\mathcal E_0(\chi_\ell\psi,\Ab;Q_\ell)=\mathcal E_0(\chi_\ell\psi e^{i\kappa H\phi},\Ab-\nabla\phi;Q_\ell)\\
\geq (1-\delta)\mathcal E_0(\chi_\ell\psi e^{i\kappa H\phi},\Fb;Q_\ell)
-\Big(C\delta^{-1}\kappa^2H^2\|\Ab-\Fb-\nabla\phi\|_{L^\infty(Q_\ell)}^2+\delta\kappa^2\Big)\int_{Q_\ell}|\psi|^2\,dx\,.
\end{multline*}
Using the estimates in Remark~\ref{rem:op-l4} and \eqref{gaugeAF} we get,
$$
\mathcal E_0(\chi_\ell\psi,\Ab;Q_\ell)\geq
(1-\delta)\mathcal E_0(\chi_\ell\psi e^{i\kappa H\phi},\Fb;Q_\ell)-C\Big(\delta^{-1}\kappa^2\left(1-\frac{H}\kappa\right)^{5/3}\ell^5+\delta\kappa[\kappa-H]\ell^3\Big)\,.
$$
Inserting this into \eqref{eq:decomp2}, we finish the proof of Proposition~\ref{prop-1}.
\end{proof}

\begin{rem}\label{rem:lb-en}
In the setting of Proposition~\ref{prop-1}, let $R=\ell\sqrt{\kappa H}$. The change of variables $x\mapsto x\sqrt{\kappa H}$, Lemma~\ref{Lem1} and \eqref{eq:m0gb} yield
$$\dfrac{1}{|Q_\ell|}\mathcal E_0(\chi_\ell\psi e^{i\kappa H\phi},\Fb;Q_\ell)\geq
\kappa^2g\left(\frac{H}\kappa\right)\,.
$$
Furthermore, under Assumption~\ref{assump}, we know that $H/\kappa\to1_-$, and by Theorem~\ref{thm:Ab}, 
$$\kappa^2g\left(\frac{H}\kappa\right)=E_{\rm Ab}[\kappa-H]^2+[\kappa-H]^2o(1)\,.$$
\end{rem}

\begin{proposition}\label{prop-2}
There exist positive constants $C>0$ and $\kp_{0}>1$ such that the following inequality holds
\begin{multline*}
\dfrac{\mathcal{E}_{0}(\psi,{\bf A};Q_{\ell})}{|Q_{\ell}|}\leq
 (1+\delta)\left(1-\frac{2}{R}\right)[\kp-H]_{+}^{2}\dfrac{c(R)}{R^{2}}\\
 +C\Big(\ell^{-1}+\kappa^{-1}\ell^{-3}[\kappa-H]^{-1}+\delta\kappa+\delta^{-1}\kappa^{1/3}\ell^2[\kappa-H]^{2/3}+\ell^{-1/2}\kappa^{1/2} \Big) [\kappa-H]\,,\end{multline*}
where
\begin{itemize}
\item $\delta\in(0,1)$, $\kappa\geq\kappa_0$, and $(\kappa,H)$ satisfy Assumption~\ref{assump}\,;
\item $(\psi, {\bf A})\in H^{1}(\Omega ;\mathbb{C}) \times \dot{H}^{1}_{\ddiv,{\bf F}}(\R^{3})$ is a minimizer of the functional in  \eqref{GL-Energy}\,;
\item $\kappa^{-1/2}\leq \ell<1$, $Q_\ell\subset\{{\rm dist}(x,\partial\Omega)\geq 2\kappa^{-1/2}\}$ is a cube of side length $\ell$\,;
\item $R=\ell\sqrt{\kappa H}$ and $c(R)$ is the energy introduced in \eqref{cR}.
\end{itemize}
\end{proposition}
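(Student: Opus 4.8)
The plan is to bound the local energy of the minimizer from above by the energy of a well-chosen competitor, exploiting the \emph{global} minimality of $(\psi,\Ab)$. The natural trial state is the Dirichlet minimizer $v_{b,R}$ of $F_{b,Q_R}$ from Lemma~\ref{Lem:L4}, rescaled to $Q_\ell$. Concretely, I would set $b=H/\kappa$ and $R=\ell\sqrt{\kappa H}$, take the gauge function $\phi$ of Lemma~\ref{lem:gauge}, and let $v$ be a rescaled, $e^{i\kappa H\phi}$-gauged copy of $v_{b,R}$ on $Q_\ell$, so that $v$ vanishes on $\partial Q_\ell$. With $\chi_\ell$ as in Notation~\ref{not:Q-l}, I then form the global competitor
\[
\Phi=\chi_\ell\, v+(1-\chi_\ell)\psi ,
\]
which equals $\psi$ on and outside $\partial Q_\ell$ and equals $v$ in the bulk of $Q_\ell$. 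Since $\Phi=\psi$ outside $Q_\ell$ and $\Ab$ is left unchanged, the magnetic term and all contributions away from $Q_\ell$ cancel in $\mathcal E^{3D}(\psi,\Ab)\le\mathcal E^{3D}(\Phi,\Ab)$, leaving the purely local comparison $\mathcal E_0(\psi,\Ab;Q_\ell)\le\mathcal E_0(\Phi,\Ab;Q_\ell)$.

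Next I would estimate $\mathcal E_0(\Phi,\Ab;Q_\ell)$. In the transition layer $Q_\ell\setminus Q_{\ell-1/\sqrt{\kappa H}}$ the term $(1-\chi_\ell)\psi$ and the cross terms in $\nabla\chi_\ell$ are controlled by the a priori bounds $\norm{\psi}_\infty\le1$ and \eqref{grad-est}, together with the $L^4$ estimate of Remark~\ref{rem:op-l4}; this produces exactly the error $C\ell^{-1/2}\kappa^{1/2}[\kappa-H]$, as in the first step of the proof of Proposition~\ref{prop-1}. Up to these errors, $\mathcal E_0(\Phi,\Ab;Q_\ell)$ reduces to $\mathcal E_0(v,\Ab;Q_\ell)$, and I then replace $\Ab$ by $\Fb$ exactly as in the second step there: gauging by $\phi$ and applying $|a+b|^2\le(1+\delta)|a|^2+(1+\delta^{-1})|b|^2$ to the kinetic term, and invoking \eqref{gaugeAF}, yields the factor $(1+\delta)$ on the magnetic kinetic energy and the errors $C\big(\delta\kappa+\delta^{-1}\kappa^{1/3}\ell^2[\kappa-H]^{2/3}\big)[\kappa-H]$.

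Finally I would rescale by $y=\sqrt{\kappa H}(x-x_\ell)$, which turns $\tfrac1{|Q_\ell|}\mathcal E_0(v,\Fb;Q_\ell)$ into $\tfrac{\kappa^2}{R^3}F_{b,Q_R}(v_{b,R})=\tfrac{\kappa^2}{R^3}M_0(b,R)$. The spurious $(1+\delta)$ sits on the kinetic part only, but using the Euler--Lagrange identity $b\int_{Q_R}|(\nabla-i\Fb)v_{b,R}|^2=\int_{Q_R}|v_{b,R}|^2-\int_{Q_R}|v_{b,R}|^4$ and $M_0(b,R)=-\tfrac12\int_{Q_R}|v_{b,R}|^4$ from Lemma~\ref{Lem:L4}, this $(1+\delta)$-weighted energy equals $(1+2\delta)M_0(b,R)+\delta\int_{Q_R}|v_{b,R}|^2$. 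The extra term is nonnegative and, by Cauchy--Schwarz together with $\int_{Q_R}|v_{b,R}|^4\le R^3(1-b)^2$ (Lemma~\ref{Lem:L4} and \eqref{bnd-on-g}), is at most $R^3(1-b)$, contributing $C\delta\kappa[\kappa-H]$. It then remains to convert $M_0(b,R)$ into the advertised main term: by Lemma~\ref{Lem1} and \eqref{eq:Ka},
\[
\frac{M_0(b,R)}{R^3}\le\frac{R-2}{R}(1-b)^2\frac{c(R)}{R^2}+\frac{C(R-2)(1-b)}{R^2}+\frac{C}{R^3},
\]
and multiplying by $\kappa^2$ and using $\kappa(1-b)=[\kappa-H]$ and $R\approx\kappa\ell$ turns the last two terms into $C\ell^{-1}[\kappa-H]$ and $C\kappa^{-1}\ell^{-3}[\kappa-H]^{-1}\cdot[\kappa-H]$, precisely the remaining error terms. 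Collecting everything, and relabelling $2\delta$ as $\delta$, yields the claimed inequality (the $[\kappa-H]_+$ simply keeps it valid outside the regime $H<\kappa$).

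I expect the main obstacle to be the gluing/transition-layer bookkeeping in the competitor $\Phi$: since the Dirichlet trial state $v$ does not match $\psi$ on $\partial Q_\ell$, one must interpolate through $\chi_\ell$ and absorb the cross terms $\nabla\chi_\ell\cdot(\nabla-i\kappa H\Ab)\psi$ and $|\nabla\chi_\ell|^2|\psi|^2$ using only the a priori and $L^4$ bounds, \emph{without} access to the sharp $L^\infty$ bound \eqref{eq:2D-up-FK}. A secondary subtlety is tracking the $(1+\delta)$ factor through the rescaling so that it multiplies the $c(R)$ term rather than shifting $b$, which the Euler--Lagrange identity for $v_{b,R}$ resolves.
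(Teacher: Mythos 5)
Your skeleton is the paper's: compare the minimizer with a trial state built from the Dirichlet minimizer $v_{b,R}$ of $F_{b,Q_R}$ rescaled to $Q_\ell$, use global minimality to localize, replace $\Ab$ by $\Fb$ at cost $(1+\delta)$ via \eqref{gaugeAF}, and pass from $M_0(b,R)$ to $c(R)$ through Lemma~\ref{Lem1} and \eqref{eq:Ka} (your Euler--Lagrange redistribution of the $(1+\delta)$ factor is correct and equivalent to the paper's cruder $-\kp^2|\varphi|^2\leq(1+\delta)(-\kp^2|\varphi|^2)+\delta\kp^2|\varphi|^2$; both yield the $C\delta\kp[\kp-H]$ term, and your final accounting of the $\ell^{-1}$ and $\kp^{-1}\ell^{-3}[\kp-H]^{-1}$ errors matches). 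The genuine gap is in your gluing. You interpolate \emph{inside} $Q_\ell$ with $\Phi=\chi_\ell v+(1-\chi_\ell)\psi$, so in the layer $Q_\ell\setminus Q_{\ell-\frac1{\sqrt{\kp H}}}$ the trial state truly mixes $v$ and $\psi$, and the expansion of the kinetic term produces pieces like $\int_{\rm layer}\chi_\ell(1-\chi_\ell)\,|(\nb-i\kp H\Ab)v|\,|(\nb-i\kp H\Ab)\psi|\,dx$ and $\int_{\rm layer}|\nb\chi_\ell|^2|v|^2\,dx$, which require \emph{local} control of $v$ (mass, $L^4$ mass, and kinetic energy in a shell of width $(\kp H)^{-1/2}$ along $\partial Q_\ell$). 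Nothing in the paper provides boundary-layer estimates for $v_{b,R}$; all you have is the global bound of Lemma~\ref{Lem:L4} and, via the Euler--Lagrange identity, the global kinetic bound $\int_{Q_\ell}|(\nb-i\kp H\Fb)v|^2\lesssim\kp\ell^3[\kp-H]$. Feeding these in as you propose, e.g.\ with the pointwise bound \eqref{grad-est} and Cauchy--Schwarz, gives for the kinetic cross term $C\sqrt{\kp H}\,|{\rm layer}|^{1/2}\|(\nb-i\kp H\Ab)v\|_{L^2(Q_\ell)}\approx C\kp\,\ell^{5/2}[\kp-H]^{1/2}$, which exceeds the claimed remainder $C\ell^{-1/2}\kp^{1/2}[\kp-H]\cdot\ell^3$ by the factor $\kp^{1/2}[\kp-H]^{-1/2}$ --- as large as $\kp^{1/4}$ when $1-\frac{H}{\kp}\approx\kp^{-1/2}$, a regime allowed by Assumption~\ref{assump}. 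Weighting the Cauchy--Schwarz only shifts the loss: a weight $\epsilon$ on the global $v$-kinetic and $\epsilon^{-1}$ on the layer terms gives, after optimizing, an error $\approx\kp^{3/4}\ell^{11/4}[\kp-H]$, still over budget by $\kp^{1/4}\ell^{1/4}\geq\kp^{1/8}$, and it cannot be hidden in $\delta\kp[\kp-H]$ since the inequality must hold for every $\delta\in(0,1)$. Your appeal to ``the first step of the proof of Proposition~\ref{prop-1}'' does not apply here: that computation rests on the localization identity obtained from the equation \eqref{E-GL} satisfied by $\psi$, in which the dangerous term $-\int|(\nb-i\kp H\Ab)\psi|^2$ enters with a favorable sign and is dropped; your $\Phi$ (and the rescaled $v$, which solves an equation with $\Fb$, not $\Ab$) satisfies no such equation. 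In addition, the quartic layer term $\kp^2\int_{\rm layer}|v|^4\,dx$ needs $\|v_{b,R}\|_\infty\leq1$ (standard by truncation, but nowhere established in the paper) even to be estimated crudely.

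The paper's proof sidesteps all of this by gluing \emph{outside} $Q_\ell$: the trial state is $\varphi(x)=v_R(x\sqrt{\kp H})$ on all of $Q_\ell$ (admissible precisely because $v_R\in H^1_0(Q_R)$), $\varphi=\eta_R\psi$ in the shell $Q_{\ell+\frac1{\sqrt{\kp H}}}\setminus Q_\ell$ with $\eta_R$ vanishing on $\partial Q_\ell$, and $\varphi=\psi$ beyond. No mixing occurs anywhere, so the only gluing cost is $\mathcal E_0(\eta_R\psi,\Ab;\cdot)-\mathcal E_0(\psi,\Ab;\cdot)$ in the shell, computed through the localization identity from \eqref{E-GL} and bounded by Remark~\ref{rem:op-l4} --- the $L^4$ estimate for the \emph{true} minimizer $\psi$, valid in any interior box --- yielding exactly $C\ell^{-1/2}\kp^{1/2}[\kp-H]\ell^3$. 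To rescue your interior interpolation you would need either a quantitative decay estimate for $v_{b,R}$ near $\partial Q_R$, or a mean-value selection of the cutoff radius over $\sim\kp^{1/2}\ell^{1/2}$ candidate shells to gain the missing smallness of the local $v$-integrals; neither appears in your proposal, so as written the argument does not close.
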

\begin{proof}
Let $x_{0}$ be the center of $Q_{\ell}$. Without loss of generality, we may assume that $x_{0}=0$ so that we reduce to the case
$$
Q_{\ell}= (-\ell/2,\ell/2)\times(-\ell/2,\ell/2)\times(-\ell/2,\ell/2)\subset \{{\rm dist}(x,\partial\Omega)>\kp^{-1+\delta}\}.
$$
In light of Lemma~\ref{lem:gauge}, we may assume, after performing a gauge transformation,  that the magnetic potential satisfies,
\begin{equation}\label{gaugeAF*}
|{\bf A}(x)-{\bf F}(x)|\leq C\kp^{-1}\left(1-\frac{H}\kappa\right)^{1/3}\ell,
\end{equation}
where ${\bf F} $ is the magnetic potential introduced in \eqref{MP-F}.

Let $b=H/\kp$, $R=\ell\sqrt{\kp H}$ and $v_{R}\in H^{1}_{0}(Q_{R})$ be a minimizer of the functional in \eqref{M0},  i.e. $F_{b,Q_{R}}(v_{R})= M_{0}(b,R)$.

Let $\chi_R \in C^{\infty}_{c}(\R^{3})$ be a cut-off function such that
\begin{equation}
0\leq \chi_{R}\leq 1,~|\nabla \chi_{R}|\leq C  \quad {\rm in }\quad {\rm supp }~\chi_{R}\subset Q_{R+1}, \quad \chi_{R} =1\quad {\rm in}~ Q_{R},
\end{equation}
for some universal constant $C$. Let $\eta_{R}(x)= 1-\chi_{R}(x\sqrt{\kp H})$ for all $x\in\R^{3}$. We introduce the function (cf. \cite{SS02})
\begin{equation}
\varphi(x)={\bf 1}_{Q_{\ell}}(x) v_{R}(x\sqrt{\kp H})+\eta_{R}(x)\psi(x), \quad (x\in\Omega).
\end{equation}
Note that the function $\varphi$ satisfies
\begin{equation}
\varphi(x)=\left\{ \begin{array} {lcl}
v_{R}(x\sqrt{\kp H})& {\rm if}& x\in Q_{\ell}\,,\\
\eta_{R}(x)\psi(x)&{\rm if}& x\in Q_{\ell+\frac{1}{\sqrt{\kp H}}}\setminus Q_{\ell}\,,\\
\psi(x)&{\rm if}& x\in \Omega \setminus Q_{\ell+\frac{1}{\sqrt{\kp H}}}\,.\\
\end{array}\right.
\end{equation}
We will prove that, for all $\delta\in(0,1)$,
 \begin{equation}\label{E0-UB-est}
 \mathcal{E}(\varphi,{\bf A};\Omega)\leq  \mathcal{E}(\psi,{\bf A};\Omega\setminus Q_{\ell})+(1+\delta)\dfrac{1}{b\sqrt{\kp H}}M_{0}(b,R)+r(\kp)
  \end{equation}
where $M_{0}(b,R)$ is defined in \eqref{M0},  $r(\kp)$ is 
\begin{equation}\label{eq:r*}
r(k)= C\Big(\delta\kappa+\delta^{-1}\kappa^{1/3}\ell^2[\kappa-H]^{2/3}+\ell^{-1/2}\kappa^{1/2} \Big)[\kappa-H]\ell^3\,,
\end{equation}
and $C>0$ is a constant.
\begin{proof}[Proof of \eqref{E0-UB-est}]
Recall the Ginzburg-Landau energy $\mathcal{E}_{0}$ defined in \eqref{E0}. We may write
\begin{equation}\label{eq:E1+E2}
 \mathcal{E}(\varphi,{\bf A};\Omega)=\mathcal{E}_{1}+\mathcal{E}_{2}
 \end{equation}
where
\begin{equation}\label{eq:E1,E2}
\mathcal{E}_{1}= \mathcal{E}(\varphi,{\bf A};\Omega\setminus Q_{\ell}),\qquad \mathcal{E}_{2}=\mathcal{E}_0(\varphi,{\bf A};Q_{\ell})
\end{equation}
Let us start by estimating $\mathcal{E}_{1}$ from above. We write
\begin{equation}\label{eq:E1}
\mathcal{E}_{1}= \mathcal{E}(\psi,{\bf A};\Omega\setminus Q_{\ell})+ \mathcal{R}(\psi,{\bf A}),
\end{equation}
where
$$\mathcal{R}(\psi,{\bf A})=\mathcal E_0\big(\eta_R(x\sqrt{\kappa H})\psi,\Ab;Q_{\ell+\frac{1}{\sqrt{\kp H}}}\setminus Q_{\ell}\big)-
\mathcal E_0\big(\psi,\Ab;Q_{\ell+\frac{1}{\sqrt{\kp H}}}\setminus Q_{\ell}\big)\,.$$
An integration by parts yields
\begin{multline*}
 \mathcal{R}(\psi,{\bf A})=\frac{\kappa^2}2\int_{  Q_{\ell+\frac{1}{\sqrt{\kp H}}}\setminus Q_{\ell}} \big(\eta_{R}^{4}(x\sqrt{\kp H})-2\eta_{R}^{2}(x\sqrt{\kp H})-1\big)|\psi|^{4}dx
 +\kp^{2}\int_{  Q_{\ell+\frac{1}{\sqrt{\kp H}}}\setminus Q_{\ell}} |\psi|^{2}dx \\
-\int_{  Q_{\ell+\frac{1}{\sqrt{\kp H}}}\setminus Q_{\ell}}|(\nb -i\kp H{\bf A})\psi|^{2} dx +\int_{  Q_{\ell+\frac{1}{\sqrt{\kp H}}}\setminus Q_{\ell}}|\nb \eta_{R}|^{2}|\psi|^{2}dx.
 \end{multline*}
Using that $0\leq \eta_{R}\leq1$ together with the estimate $|\nb \eta_{R}|\leq C\sqrt{\kappa H}$ and Remark~\ref{rem:op-l4}, we  get
$$
 \mathcal{R}(\psi,{\bf A})\leq C\ell^{-1/2}\kappa^{1/2}[\kappa-H]\ell^3.
$$
By inserting this into \eqref{eq:E1}, we deduce that
\begin{equation}\label{eq:E1*}
\mathcal{E}_{1}\leq  \mathcal{E}(\psi,{\bf A};\Omega\setminus Q_{\ell})+C\ell^{-1/2}\kappa^{1/2}[\kappa-H]\ell^3.
\end{equation}
Now, we estimate the energy $\mathcal{E}_{2}$ in \eqref{eq:E1,E2}. Using the Cauchy-Shwarz inequality and \eqref{gaugeAF*}, we write for all $\delta\in(0,1)$,
\begin{multline*}
\mathcal{E}_{2}\leq (1+\delta)\int_{Q_{\ell}}\Bigg\{|(\nb -i\kp H{\bf F})\varphi|^{2}-\kp^{2}|\varphi|^{2}+\dfrac{\kp^{2}}{2}|\varphi|^{4}\Bigg\}dx\\
+C\Big(\delta\kappa^2+\delta^{-1}\kp^{2}\left(1-\frac{H}\kappa\right)^{2/3}\ell^2\Big)
\int_{Q_{\ell}}|\varphi|^{2}dx\,.
\end{multline*}
Now we use that $\varphi=v_{R}(x\sqrt{\kp }H)$ in $Q_{\ell}$, the estimate in Lemma~\ref{Lem:L4} and \eqref{bnd-on-g} to write,
\begin{multline}\label{eq:E2}
\mathcal{E}_{2}\leq (1+\delta)\int_{Q_{\ell}}\Bigg\{|(\nb -i\kp H{\bf F})\varphi|^{2}-\kp^{2}|\varphi|^{2}+\dfrac{\kp^{2}}{2}|\varphi|^{4}\Bigg\}dx\\
+C\Big(\delta\kappa[\kappa-H]\ell^3+\delta^{-1}\kp^{2}\left(1-\frac{H}\kappa\right)^{5/3}\ell^5\Big)\,.
\end{multline}
Since $\varphi(x)=v_{R}(x\sqrt{\kp H})$ in $Q_{\ell}$, $b=H/\kappa$ and $R=\ell \sqrt{\kp H}$, a change of variables yields
$$
\int_{Q_{\ell}}\Bigg\{|(\nb -i\kp H{\bf F})\varphi|^{2}-\kp^{2}|\varphi|^{2}+\dfrac{\kp^{2}}{2}|\varphi|^{4}\Bigg\}dx
=\dfrac{1}{b\sqrt{\kp H}} M_0(b,R).
$$
Inserting this into \eqref{eq:E2}  then collecting \eqref{eq:E1*} and \eqref{eq:E1+E2}, we finish the proof of \eqref{E0-UB-est}.
\end{proof}

Now we proceed in the proof of Proposition~\ref{prop-2}. By the definition of the minimizer $(\psi,\Ab)$, we have
\[
\mathcal{E}(\psi,{\bf A};\Omega)\leq \mathcal{E}(\varphi,{\bf A};\Omega).
\]
Since $\mathcal{E}(\psi,{\bf A};\Omega)=\mathcal{E}(\psi,{\bf A};\Omega\setminus Q_{\ell})+\mathcal{E}_{0}(\psi,{\bf A};Q_{\ell})$, then \eqref{E0-UB-est} yields,
$$
\mathcal{E}_{0}(\psi,{\bf A};Q_{\ell})\leq  (1+\delta) \dfrac{1}{b\sqrt{\kp H}}M_{0}(b,R)+
r(\kappa)\,,
$$
where $r(\kappa)$ is given in \eqref{eq:r*}. 
Dividing both sides by $|Q_{\ell}|$ and using Lemma~\ref{Lem1} and \eqref{eq:Ka}, we finish the proof of Proposition~\ref{prop-2}.
\end{proof}

\begin{rem}\label{rem:par}{\bf [Choice of the parameters]}
Let $\mu=\kappa^{1/2}(1-\frac{H}\kappa)$. Under Assumption~\ref{assump}, $1\ll\mu\ll\kappa^{1/2}$.

Let $B>0$ be a function of $\kappa$ such that $1\ll B\ll \mu$. We choose $\delta=B\kappa^{-1/2}$. Under the additional condition $\mu^{-2}\ll\ell\ll 1$, we observe that all the terms
$$\delta\kappa,\quad \ell^{-1/2}\kappa^{1/2}\,,\quad \ell^{-1}\,,\quad \kappa^{-1}\ell^{-3}[\kappa-H]^{-1}$$
are of the order $o([\kappa-H])$. 

To get $\delta^{-1}\kappa^{1/3}[\kappa-H]^{2/3}=o([\kappa-H])$, 
the additional condition $\ell\approx\mu^{1/6}\kappa^{-1/3}$ arises. To respect the condition $\ell\gg\mu^{-2}$, $\mu$ should satisfy  $\mu\gg\kappa^{2/13}$. This motivates  Assumption~\ref{assump'} below. 
\end{rem}

\begin{assumption}\label{assump'}~
\begin{itemize}
\item $a:\R_+\to\R_+$ and $b:\R_+\to\R_+$ are two functions satisfying
$$\lim_{\kappa\to\infty}a(\kappa)=\infty\,,\quad\lim_{\kappa\to\infty}b(\kappa)=0\quad{\rm and}\quad a(\kappa)\kappa^{-9/26}\leq b(\kappa){~\rm in~a~neighborhood~of~}\infty\,.$$
\item $\kappa>0$ and $H>0$ satisfy $\displaystyle a(\kappa)\kappa^{-9/26}\leq 1-\frac{H}\kappa\leq b(\kappa)$.
\end{itemize}
\end{assumption}

Collecting Propositions~\ref{prop-1} and \ref{prop-2}, we get:

\begin{corollary}\label{corol:lb-op-l4}
There exist $\kappa_0>0$ and a function ${\rm err}:[\kappa_0,\infty)\to(0,\infty)$  such that:
\begin{itemize}
\item $\displaystyle\lim_{\kappa\to\infty}{\rm err}(\kappa)=0$\,;
\item the following two inequalities  hold
\begin{equation}\label{eq:lb-en-l4}
\left|\frac1{|Q_\ell|}\mathcal E_0(\chi_\ell\psi e^{i\kappa H\phi},\Fb;Q_\ell)-
[\kappa-H]^2E_{\rm Ab}\right|\leq [\kappa-H]^2{\rm err}(\kappa)\,,
\end{equation}
\begin{equation}\label{eq:lb-op-l4}
\left|\dfrac1{|Q_{\ell}|}\int_{Q_{\ell}}{|\psi|^{4}}dx+2E_{\rm Ab}\Big(1-\dfrac{H}{\kp}\Big)^{2}\right|\leq \Big(1-\dfrac{H}{\kp}\Big)^{2}{\rm err}(\kappa)\,,
\end{equation}
where 
\begin{itemize}
\item $E_{\rm Ab}$ is the Abrikosov constant introduced in Theorem~\ref{thm:Ab}\,;
\item $\Fb$ is the magnetic potential in \eqref{MP-F}\,;
\item $\kappa\geq\kappa_0$ and $(\kappa,H)$ satisfy Assumption~\ref{assump'}\,;
\item $(\psi,\Ab)$ is a minimizer of  \eqref{GL-Energy}\,;
\item $\ell=(\kappa H)^{-1/2}\sqrt{2\pi[(\kappa-H)^{1/3}\kappa^{1/6}H]}$ with $[\cdot]$ denoting the integer part (floor function)\,;
\item $Q_{\ell}\subset \{{\rm dist}(x,\partial\Omega)>2\kp^{-1/2}\}$ and $\chi_\ell$ are as in Notation~\ref{not:Q-l}\,;
\item $\phi\in C^\infty(\overline{Q_\ell})$ is the function defined by Lemma~\ref{lem:gauge}.
\end{itemize}
\end{itemize}
\end{corollary}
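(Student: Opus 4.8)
The plan is to establish the two estimates \eqref{eq:lb-en-l4} and \eqref{eq:lb-op-l4} in sequence, first pinning down the local energy $\mathcal E_0(\chi_\ell\psi e^{i\kappa H\phi},\Fb;Q_\ell)$ to leading order by a sandwich argument, and then converting that energy information into the sharp value of the $L^4$-density via a completed-square argument built on Theorem~\ref{NF}. For \eqref{eq:lb-en-l4} the lower bound is immediate from Remark~\ref{rem:lb-en}: after the scaling $x\mapsto x\sqrt{\kappa H}$, Lemma~\ref{Lem1} and \eqref{eq:m0gb} give $|Q_\ell|^{-1}\mathcal E_0(\chi_\ell\psi e^{i\kappa H\phi},\Fb;Q_\ell)\geq\kappa^2 g(H/\kappa)$, and Theorem~\ref{thm:Ab} identifies $\kappa^2 g(H/\kappa)=E_{\rm Ab}[\kappa-H]^2+o([\kappa-H]^2)$. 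For the matching upper bound I would chain the first inequality of Proposition~\ref{prop-1} (bounding $(1-\delta)|Q_\ell|^{-1}\mathcal E_0(\chi_\ell\psi e^{i\kappa H\phi},\Fb;Q_\ell)$ by $|Q_\ell|^{-1}\mathcal E_0(\psi,\Ab;Q_\ell)$ plus errors) with Proposition~\ref{prop-2} (bounding $|Q_\ell|^{-1}\mathcal E_0(\psi,\Ab;Q_\ell)$ by $(1+\delta)(1-2/R)[\kappa-H]^2 c(R)/R^2$ plus errors), and then replace $c(R)/R^2$ by $E_{\rm Ab}+o(1)$ using Theorem~\ref{thm:Ab}. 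The specific choice $\ell=(\kappa H)^{-1/2}\sqrt{2\pi[(\kappa-H)^{1/3}\kappa^{1/6}H]}$ is what makes this legitimate: it forces $R^2/2\pi=[(\kappa-H)^{1/3}\kappa^{1/6}H]\in\N$, so $c(R)$ is defined and Theorem~\ref{thm:Ab} applies, and it also yields $R\to\infty$, hence $1-2/R\to1$.

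Turning to \eqref{eq:lb-op-l4}, the upper bound $|Q_\ell|^{-1}\int_{Q_\ell}|\psi|^4\leq -2E_{\rm Ab}(1-H/\kappa)^2(1+o(1))$ is exactly the content of Remark~\ref{rem:op-l4}, once one checks that $\ell\geq\kappa^{-1/2}$ (which holds under Assumption~\ref{assump'}). For the lower bound I would reuse the sharp linear estimate from the proof of Theorem~\ref{thm:op-l4}: writing $\mathcal E_0(\chi_\ell\psi e^{i\kappa H\phi},\Fb;Q_\ell)=\mathcal L+\tfrac{\kappa^2}{2}\int_{Q_\ell}|\chi_\ell\psi|^4$ and bounding the linear part $\mathcal L$ from below, through the change of variables, the energy $\mathcal M_0(b,R)$ of \eqref{matcalM}, and Theorem~\ref{NF}, by $-\kappa^2|Q_\ell|^{1/2}(-2g(b))^{1/2}\bigl(\int_{Q_\ell}|\chi_\ell\psi|^4\bigr)^{1/2}(1+o(1))$. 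Setting $N=\int_{Q_\ell}|\chi_\ell\psi|^4$ and combining this with the upper bound on $\mathcal E_0$ from the previous paragraph, the inequality reduces, after dividing by $\kappa^2|Q_\ell|(-2g(b))$, to a relation of the shape $\tfrac12(t-1)^2\leq o(1)$ with $t=(N/(-2g(b)|Q_\ell|))^{1/2}$. This completed square forces $N/|Q_\ell|\to -2g(b)=-2E_{\rm Ab}(1-H/\kappa)^2(1+o(1))$. Since $\chi_\ell=1$ on $Q_{\ell-(\kappa H)^{-1/2}}$ and $0\leq\chi_\ell\leq1$, the lower bound passes from $\chi_\ell\psi$ to $\psi$, and together with the upper bound this gives \eqref{eq:lb-op-l4}.

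The technical heart, and the main obstacle, is the parameter bookkeeping: I must show that every error produced by Propositions~\ref{prop-1} and \ref{prop-2} --- namely $\delta\kappa$, $\delta^{-1}\kappa^{1/3}\ell^2[\kappa-H]^{2/3}$, $\ell^{-1/2}\kappa^{1/2}$, $\ell^{-1}$ and $\kappa^{-1}\ell^{-3}[\kappa-H]^{-1}$, each carrying a factor $[\kappa-H]$ --- is $o([\kappa-H]^2)$, and simultaneously that the error $C\kappa^{3/4}N^{1/2}$ in the linear bound of the second step is negligible against the main term $\kappa^2|Q_\ell|^{1/2}(-2g(b))^{1/2}N^{1/2}$. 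Following Remark~\ref{rem:par}, I would take $\delta=B\kappa^{-1/2}$ with $1\ll B\ll\mu:=\kappa^{1/2}(1-H/\kappa)$; balancing $\delta^{-1}\kappa^{1/3}\ell^2[\kappa-H]^{2/3}$ against the constraint $\ell\gg\mu^{-2}$ forces $\ell\approx\mu^{1/6}\kappa^{-1/3}$, hence $\mu\gg\kappa^{2/13}$, which is precisely why Assumption~\ref{assump'} (the exponent $9/26$) is imposed. Verifying that this choice of $\ell$ and $\delta$ keeps all errors subcritical while maintaining $R\to\infty$, $R^2/2\pi\in\N$ and $\ell\geq\kappa^{-1/2}$ is the delicate part; the conceptual point is that the sharp constant $-2E_{\rm Ab}$ emerges only because Theorem~\ref{NF} supplies the exact coefficient $(-2g(b))^{1/2}$, so that the square completes perfectly --- a cruder first-Landau-level bound would produce the wrong constant.
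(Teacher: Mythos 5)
Your proposal is correct, and for \eqref{eq:lb-en-l4} it coincides with the paper's argument: the same sandwich between the lower bound of Remark~\ref{rem:lb-en} and the chain Proposition~\ref{prop-1} $\to$ Proposition~\ref{prop-2} $\to$ Theorem~\ref{thm:Ab}, with the same parameter choices $\delta=B\kappa^{-1/2}$, $1\ll B\ll\mu=\kappa^{1/2}(1-\frac{H}{\kappa})$, $\ell\approx\mu^{1/6}\kappa^{-1/3}$ dictated by Remark~\ref{rem:par}, and the same observation that the prescribed $\ell$ forces $R^2/2\pi\in\mathbb N$ and $R\to\infty$ so that $c(R)/R^2\to E_{\rm Ab}$. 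Where you genuinely diverge from the paper is the lower bound in \eqref{eq:lb-op-l4}. The paper does \emph{not} invoke Theorem~\ref{NF} here: it uses that $\psi$ solves \eqref{E-GL} to write the exact localization identity (as in \eqref{eq:bnd1}) $\mathcal E_0(\chi_\ell\psi,\Ab;Q_\ell)=\kappa^2\int_{Q_\ell}\chi_\ell^2(-1+\frac12\chi_\ell^2)|\psi|^4\,dx+\int_{Q_\ell}|\nabla\chi_\ell|^2|\psi|^2\,dx$, inserts the intermediate energy upper bound $\mathcal E_0(\chi_\ell\psi,\Ab;Q_\ell)\leq\ell^3[\kappa-H]^2E_{\rm Ab}+\ell^3o([\kappa-H]^2)$ extracted from the same sandwich, and controls the shell $Q_\ell\setminus Q_{\ell-\frac{1}{\sqrt{\kappa H}}}$ and the $|\nabla\chi_\ell|^2$ term via Remark~\ref{rem:op-l4}; this converts the energy bound directly into the $L^4$ lower bound in one step. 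Your route instead splits $\mathcal E_0(\chi_\ell\psi e^{i\kappa H\phi},\Fb;Q_\ell)=\mathcal L+\frac{\kappa^2}{2}N$, bounds $\mathcal L$ below through \eqref{matcalM} and the uniform lower bound \eqref{LB} of Theorem~\ref{NF} (which indeed holds for all $b\in(0,1)$ and $R>1$, so its use with $b=b(\kappa)\to 1_-$ is legitimate), and completes the square $\frac12(t-1)^2\leq o(1)$. This is a purely variational argument that never uses the Euler--Lagrange identity at this step, and it makes transparent why the constant $-2E_{\rm Ab}$ is forced; the paper's identity-based argument is shorter but exploits criticality of $\psi$. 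Two small bookkeeping points in your favor and against: since the corollary's energy is already gauge-transformed to $\Fb$, the rescaling yields $F^{\rm lin}_{b,Q_R}$ with $b=H/\kappa$ exactly, so the error term you carry over from the proof of Theorem~\ref{thm:op-l4} is not even needed here; on the other hand, its size for the corollary's cube is $C\kappa^{3/2}\ell^{3/2}N^{1/2}$ rather than your $C\kappa^{3/4}N^{1/2}$ (that exponent belongs to the case $\ell=\kappa^{-1/2}$), though it remains smaller than the main term $\kappa^2\ell^{3/2}(-2g(b))^{1/2}N^{1/2}$ by the factor $\mu^{-1}\ll1$, so your conclusion stands. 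Like the paper, you still rely on Theorem~\ref{thm:op-l4}/Remark~\ref{rem:op-l4} for the matching upper bound and for passing from $\chi_\ell\psi$ to $\psi$.
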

\begin{proof}
Under Assumption~\ref{assump'}, we know that $\kappa^{-9/26}\ll1-\frac{H}\kappa\ll\kappa^{-1/2}$.
We choose $\delta=B\kappa^{-1/2}$ where $B>0$ is a function of $\kappa$ satisfying $1\ll B\ll\mu:=\kappa^{1/2}(1-\frac{H}\kappa)$. Note that our choice of $\ell$ verifies
$\ell\approx\mu^{1/6}\kappa^{-1/3}$. As explained in Remark~\ref{rem:par}, with this choice, we get that all the remainder terms in Proposition~\ref{prop-1} and \ref{prop-2} are of order $o([\kappa-H]^2)$.

Now, collecting the estimates in Proposition~\ref{prop-1}, \ref{prop-2} and 
Remark~\ref{rem:lb-en}, we get
\begin{multline*}
(1-\delta)\kappa^2g\left(\frac{H}\kappa\right)\leq \frac{(1-\delta)}{|Q_\ell|}\mathcal E_0(\chi_\ell\psi e^{i\kappa H\phi},\Fb;Q_\ell)\\
\leq \dfrac{\mathcal E_0(\chi_\ell\psi ,\Ab;Q_\ell)}{|Q_{\ell}|}+o([\kappa-H]^2)\leq \frac{c(R)}{R^2}[\kappa-H]^2+o([\kappa-H]^2)\,,
\end{multline*}
where $R=\ell\sqrt{\kappa H}$. Our choice of $\ell$ ensures that $R\gg 1$ and $(2\pi)^{-1}R^2\in\mathbb N$. By applying  \eqref{eq:m0gb} and Theorem~\ref{thm:Ab},
we get \eqref{eq:lb-en-l4} and
\begin{equation}\label{eq:lb-en-l4*}
\mathcal E_0(\chi_\ell\psi ,\Ab;Q_\ell)\leq \ell^{3} [\kappa-H]^2E_{\rm Ab}+\ell^{3} o([\kappa-H]^2)\,.
\end{equation}
The proof of \eqref{eq:lb-op-l4} follows from the following localization formula,
$$\mathcal E_0(\chi_\ell\psi ,\Ab;Q_\ell)
= \kp^{2}\int_{Q_{\ell}}\chi_\ell^{2}\Big(-1+\frac{1}{2}\chi_\ell^{2}\Big)|\psi|^{4}dx+ \int_{Q_{\ell}} |\nb \chi_\ell|^{2}|\psi|^{2}dx\,.
$$
By inserting \eqref{eq:lb-en-l4*} into the aforementioned formula and by using that $\chi_\ell=1$ in $Q_{\ell-\frac1{\sqrt{\kappa H}}}$, we get
$$ \frac{-\kp^{2}}2\int_{Q_{\ell-\frac1{\sqrt{\kappa H}}}}|\psi|^{4}dx\leq 
[\kappa-H]^2E_{\rm Ab}\ell^3+
\kappa^2\int_{Q_{\ell}\setminus Q_{\ell-\frac1{\sqrt{\kappa H}}}}|\psi|^{4}dx- \int_{Q_{\ell}} |\nb \chi_\ell|^{2}|\psi|^{2}dx+\ell^3o([\kappa-H]^2)\,.
$$
The estimate in Remark~\ref{rem:op-l4} yields that
$$\kp^{2}\int_{Q_{\ell}\setminus Q_{\ell-\frac1{\sqrt{\kappa H}}}}|\psi|^{4}dx+\int_{Q_{\ell}} |\nb \chi_\ell|^{2}|\psi|^{2}dx\leq 
C\ell^{-1/2}\kappa^{1/2}[\kappa-H]\ell^3=\ell^3o([\kappa-H]^2)\,.
$$
This and Theorem~\ref{thm:op-l4} (also see Remark~\ref{rem:op-l4}) finish the proof of \eqref{eq:lb-op-l4}.
\end{proof}

\section{Sharp estimate of the $L^2$-norm}\label{sec:l2}

This section contains three main results:
\begin{itemize}
\item Lemma~\ref{lem:op-3D} regarding the spectral theory of the Landau Hamiltonian with (magnetic) periodic conditions with respect to a box lattice of $\R^3$\,;
\item Lemma~\ref{thm:Pi-Ab} and Theorem~\ref{corol:Pi-Ab} regarding the behavior of the  minimizers of the functional in \eqref{GL-Energy} in cubes with small lengths.
\end{itemize}

The proof of Theorem~\ref{thm:op-l2-main} is  a simple consequence of  the result summarized in Theorem~\ref{corol:Pi-Ab}. The proof of Theorem~\ref{corol:Pi-Ab} relies on Lemma~\ref{thm:Pi-Ab}. The proof of Lemma~\ref{thm:Pi-Ab} needs the result in Lemma~\ref{lem:op-3D} as a key ingredient. 


\subsection{The $3D$ periodic operator}

Let  $R>0$ such that $R^2\in2\pi\mathbb N$, $L>0$   and $\Fb$ be the magnetic potential in \eqref{MP-F}. 
We denote by $P_{R,L}^{3D}$ the operator
\[
P_{R,L}^{3D}= -(\nb -i{\bf F})^{2} \quad {\rm in}\quad  L^{2}_{\rm per}(Q_{R,L}),\quad Q_{R,L}=(-R/2,R/2)^{2}\times(-L/2,L/2)\,,
\]
with form domain the space ${E}_{R}^{3D}$ defined as follows
\begin{equation}\label{eq:3D-Fdom}
\begin{aligned}
{E}_{R,L}^{3D}= \Big\{  u\in H^{1}_{\rm loc}(\R^{3};\mathbb{C})~:~&          u(x_{1}+R,x_{2},x_{3})=e^{-i Rx_{2}/2} u(x_{1},x_{2},x_{3})\,,\\
& u(x_{1},x_{2}+R,x_{3})=e^{i Rx_{1}/2}u(x_{1},x_{2},x_{3})\,,\\
&u(x_{1},x_{2},x_{3}+L)= u(x_{1},x_{2},x_{3})\,, \quad \forall~ (x_{1},x_{2},x_{3})      \in \R^{3}                   \Big\}\,.
\end{aligned}
\end{equation}
When $L=R$, we will omit the reference to $L$ in the notation and simply write
$P^{3D}_{R}$, $E_R^{3D}$ and $Q_R$.

The operator $P_{R,L}^{3D}$ is  with compact resolvent. Its sequence of increasing {\it distinct} eigenvalues  is  denoted by $\{\mu_{j}({P}_{R,L}^{3D})\}$.

The Fourier transform with respect to the $x_3$-variable allows us to separate variables and express the operator $P^{3D}_{R,L}$ as the direct sum
\begin{equation}\label{eq:3D-2D}
\bigoplus_{n\in\mathbb Z}\Bigg(P_R^{2D}+(2\pi n L^{-1})^2\Bigg)\quad{\rm in}~\bigoplus_{n\in\mathbb Z}L^2\big((-R/2,R/2)^2\big)\,,
\end{equation}
where $P_R^{2D}$ is the operator introduced in \eqref{eq:op-2D}.
Consequently, we get 
\begin{equation}\label{eq:3D-sp}
\mu_{1}({P}_{R,L}^{3D})=1\quad{\rm and}\quad \mu_{2}({P}_{R,L}^{3D})=1+4\pi^2L^{-2}\,.
\end{equation}
Let $\Pi_1$ be the orthogonal projection on $L_R\subset L^2((-R/2,R/2)^2)$, the first eigenspace of the operator $P_R^{2D}$ in \eqref{eq:op-2D}. By Proposition~\ref{prop:op-2D}, we know that, under the assumption that $R^2\in 2\pi\mathbb N$, the space $L_R$ is finite dimensional and the dimension is equal to $N:=R^2/2\pi$. Thus, we may express the orthogonal projection $\Pi_1$ as follows,
$$\forall~g\in L^2((-R/2,R/2)^2)\,,\quad \Pi_1u=\sum_{m=1}^N\langle g,f_m\rangle_{L^2((-R/2,R/2)^2)}f_m\,,$$
where $(f_m)$ is an orthonormal basis of the space $L_R$. That way, we may view $\Pi_1$ as 
a projection in the space $L^2(Q_{R,L})$ via the formula
\begin{multline}\label{eq:Pi-3D}
\forall~u\in  L^2(Q_{R,L})\,,\\ (\Pi_1u)(x_1,x_2,x_3)=\sum_{m=1}^Nf_m(x_1,x_2)\int_{K_R} u(x_1,x_2,x_3)\overline{f_m(x_1,x_2)}\,dx_1dx_2\,,
\end{multline}
where
\begin{equation}\label{eq:KR}
K_R=(-R/2,R/2)\times(-R/2,R/2)\,.
\end{equation}
We introduce the quadratic form of the operator $P_R^{3D}$,
\begin{equation}\label{eq:qf-3D}
\mathcal Q_{R,L}^{3D}(u)=\int_{Q_{R,L}}|(\nabla-i\Fb)u|^2\,dx\,.
\end{equation}
Note that by definition of $\Fb$ and $\Ab_0$ in \eqref{MP-F} and \eqref{A0} respectively, we observe the following useful inequality,
\begin{equation}\label{eq:qf-3D*}
\mathcal Q_R^{3D}(u)=\int_{Q_{R,L}}\big(|(\nabla_{(x_1,x_2)}-i\Ab_0)u|^2+|\partial_{x_3}u|^2\big)\,dx\geq \int_{Q_{R,L}}|(\nabla_{(x_1,x_2)}-i\Ab_0)u|^2\,dx\,,
\end{equation}
where $\nabla_{(x_1,x_2)}=(\partial_{x_1},\partial_{x_2})$.

Now, we can prove the $3D$ analogue of Lemma~\ref{lem:op-2D}:

\begin{lem}\label{lem:op-3D}
Let $2\leq p\leq 6$.
There exists a constant $C_{p}>0$ such that for any $\gamma \in (0,1/2)$, $R,L>1$ and $u\in E_{R,L}^{3D}$ satisfying
\begin{equation}\label{Asp-gam}
\mathcal Q^{\rm 3D}_{R,L}(u)-(1+\gamma) \norm{u}^2_{L^{2}(Q_{R,L})}\leq 0
\end{equation}
then the following estimate holds:
$$
\norm{u-\Pi_{1}u}_{L^{p}(Q_{R,L})}\leq C_{p}\sqrt{\gamma}\norm{u}_{L^{2}(Q_{R,L})}.
$$
\end{lem}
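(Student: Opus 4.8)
The plan is to reduce the three-dimensional estimate to the already-established two-dimensional one (Lemma~\ref{lem:op-2D}) by exploiting the direct-sum decomposition in \eqref{eq:3D-2D}, and then to absorb the $L^p$-to-$L^2$ passage via a Sobolev/interpolation argument in which the magnetic $H^1$-norm controls the $L^p$-norm for $p\le 6$. First I would write $w=u-\Pi_1 u$. Because $\Pi_1$ is, by \eqref{eq:Pi-3D}, the projection onto the lowest Landau band (fiberwise the projection onto $L_R$), the component $w$ lives in the orthogonal complement $\Pi_2 = \mathrm{Id}-\Pi_1$ of that band. The key spectral input is the gap: from \eqref{eq:gap} applied fiberwise together with \eqref{eq:qf-3D*}, every Fourier mode in $x_3$ of $w$ satisfies the two-dimensional gap bound $\langle P_R^{2D}\Pi_2 w, \Pi_2 w\rangle \ge 3\|\Pi_2 w\|^2$, and the $x_3$-kinetic part $\|\partial_{x_3}w\|^2\ge 0$ only helps. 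Hence
\begin{equation}\label{eq:plan-gap}
\mathcal Q^{3D}_{R,L}(w)\ \ge\ 3\,\|w\|_{L^2(Q_{R,L})}^2\,.
\end{equation}

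Next I would turn the hypothesis \eqref{Asp-gam} into a bound on $\mathcal Q^{3D}_{R,L}(w)$ itself. Since $\Pi_1 u$ lies in the first eigenspace, $\mathcal Q^{3D}_{R,L}(\Pi_1 u)=\|\Pi_1 u\|_{L^2}^2$, and by orthogonality of $\Pi_1 u$ and $w$ both in $L^2$ and in the quadratic form, one gets $\mathcal Q^{3D}_{R,L}(u)=\|\Pi_1u\|_{L^2}^2+\mathcal Q^{3D}_{R,L}(w)$ and $\|u\|_{L^2}^2=\|\Pi_1u\|_{L^2}^2+\|w\|_{L^2}^2$. Plugging these into \eqref{Asp-gam} yields $\mathcal Q^{3D}_{R,L}(w)-\|w\|_{L^2}^2\le \gamma\|u\|_{L^2}^2$, and combining with \eqref{eq:plan-gap} gives $2\|w\|_{L^2}^2\le \gamma\|u\|_{L^2}^2$, i.e.
\begin{equation}\label{eq:plan-L2}
\|u-\Pi_1 u\|_{L^2(Q_{R,L})}\ \le\ \sqrt{\gamma}\,\|u\|_{L^2(Q_{R,L})}\,.
\end{equation}
This already settles the case $p=2$ and, more importantly, also furnishes an $H^1$-type bound: from $\mathcal Q^{3D}_{R,L}(w)\le \|w\|_{L^2}^2+\gamma\|u\|_{L^2}^2\le C\gamma\|u\|_{L^2}^2$ we control the full magnetic Dirichlet energy of $w$.

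For general $2\le p\le 6$ the idea is interpolation between \eqref{eq:plan-L2} and an $L^6$ bound coming from a magnetic Sobolev inequality on $\mathbb R^3$. Concretely, I would invoke the diamagnetic inequality $|(\nabla-i\Fb)w|\ge |\nabla|w||$ together with the Sobolev embedding $\dot H^1(\mathbb R^3)\hookrightarrow L^6$, after extending $w$ periodically (the periodicity structure of $E^{3D}_{R,L}$ makes $|w|$ an honest periodic function on which one uses a Poincaré--Sobolev inequality on the period cell, with constant uniform in $R,L\ge1$), to obtain $\|w\|_{L^6(Q_{R,L})}\le C\big(\mathcal Q^{3D}_{R,L}(w)^{1/2}+\|w\|_{L^2}\big)\le C\sqrt{\gamma}\,\|u\|_{L^2}$. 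Then $\|w\|_{L^p}\le \|w\|_{L^2}^{\theta}\|w\|_{L^6}^{1-\theta}$ with the appropriate $\theta=\theta(p)$ closes the estimate for all $p\in[2,6]$ with a constant $C_p$. The main obstacle I anticipate is making the Sobolev/Poincaré step uniform in $R$ and $L$: a naïve embedding on a finite cell degenerates as $R,L\to\infty$, so one must use the scale-invariant $\dot H^1(\mathbb R^3)\hookrightarrow L^6$ after a diamagnetic reduction to the non-magnetic setting, being careful that the magnetic periodicity (rather than genuine periodicity) of $w$ does not obstruct passing to $|w|$—which it does not, since $|w|$ is genuinely periodic. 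Everything else is bookkeeping with the orthogonal decomposition and the spectral gap already recorded in Proposition~\ref{prop:op-2D}.
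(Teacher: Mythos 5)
Your proposal follows essentially the same route as the paper's proof: the orthogonal decomposition $u=\Pi_1u+\Pi_2u$, the fiberwise $2D$ spectral gap \eqref{eq:gap} combined with \eqref{eq:qf-3D*} to obtain $L^2$ and magnetic-energy control of $\Pi_2u$ of order $\sqrt{\gamma}\,\norm{u}_{L^2}$, then the diamagnetic inequality and a Sobolev embedding made uniform in $R,L$ by exploiting the genuine periodicity of the modulus together with a cutoff and the scale-invariant embedding $\dot H^1(\R^3)\hookrightarrow L^6$, exactly as in the paper's inequality \eqref{eq:op-3D-Sob}, finishing by interpolation for $2\leq p\leq 6$. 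One harmless slip: your claim $\mathcal Q^{\rm 3D}_{R,L}(\Pi_1u)=\norm{\Pi_1u}^2_{L^2(Q_{R,L})}$ is false in general, since the fiber coefficients of $\Pi_1u$ depend on $x_3$ and contribute $\norm{\partial_{x_3}\Pi_1u}^2_{L^2}$, but only the inequality $\mathcal Q^{\rm 3D}_{R,L}(\Pi_1u)\geq\norm{\Pi_1u}^2_{L^2(Q_{R,L})}$ is needed, so your intermediate bound $\mathcal Q^{\rm 3D}_{R,L}(u-\Pi_1u)-\norm{u-\Pi_1u}^2_{L^2}\leq\gamma\norm{u}^2_{L^2}$ and hence the whole argument stand.
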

\begin{proof}
Let $\Pi_2u=u-\Pi_1u$. It is easy to check that $\Pi_1u$ and $\Pi_2u$ are orthogonal in $L^2(Q_{R,L})$ and that
$$\mathcal Q^{\rm 3D}_{R,L}(u)-\norm{u}^2_{L^{2}(Q_{R,L})}=\sum_{i=1}^2\left(\mathcal Q^{\rm 3D}_{R,L}(\Pi_iu)-\norm{\Pi_iu}^2_{L^{2}(Q_{R,L})}\right)\,.$$
Using \eqref{eq:qf-3D*} and \eqref{eq:gap}, we get
$$\mathcal Q^{\rm 3D}_{R,L}(u)-\norm{u}^2_{L^{2}(Q_{R,L})}\geq \frac12\mathcal Q^{\rm 3D}_{R,L}(\Pi_2u)+\left(\frac32-1\right)\norm{\Pi_2u}^2_{L^{2}(Q_{R,L})}\,.$$
Using the diamagnetic inequality, we get further
$$\mathcal Q^{\rm 3D}_{R,L}(u)-\norm{u}^2_{L^{2}(Q_{R,L})}\geq \frac12\norm{\nabla|\Pi_2u|}^2_{L^2(Q_{R,L})}+\frac12\norm{\Pi_2u}^2_{L^{2}(Q_{R,L})}\,.$$
We insert this into \eqref{Asp-gam} to get,
$$\norm{\nabla|\Pi_2u|}^2_{L^2(Q_{R,L})}+\norm{\Pi_2u}^2_{L^{2}(Q_{R,L})}\leq 2\gamma
\norm{u}^2_{L^{2}(Q_{R,L})}\,.$$
This finishes the proof of Lemma~\ref{lem:op-3D} once the following Sobolev inequality is established
\begin{equation}\label{eq:op-3D-Sob}
\forall~R\geq 1\,,~\forall~p\in[2,6]\,,~\forall~f\in~E_R^{3D}\,,\quad \|f\|_{L^p(Q_{R,L})}\leq C_p\|f\|_{H^1(Q_{R,L})}\,,
\end{equation}
where $C_p$ is a constant independent from $R\geq 1$.
To prove \eqref{eq:op-3D-Sob}, let $f\in E_{R,L}^{3D}$, $\chi\in C_c^\infty(B_{\R^2}(0,6))$ and $\eta\in C_c^\infty(B_{\R}(0,6))$ such that 
\begin{itemize}
\item $\chi=1$ in $B_{\R^2}(0,3)$ and $\eta=1$ in $B_\R(0,3)$\,;
\item $0\leq \chi\leq 1$ in $B_{\R^2}(0,6)$ and $0\leq \eta\leq 1$ in $B_\R(0,3)$\,;
\end{itemize} Note that, since $f\in E_{R,L}^{3D}$, then $f(x)$ can be defined everywhere by (magnetic) periodicity. Let us define  
$$g(x)=\chi\left(\frac{x^\bot}{R}\right)\eta\left(\frac{x_3}{L}\right)f(x)\,,\quad (x=(x_\bot,x_3)\in\R^3)\,.$$
Clearly, $g$ belongs to the Homogeneous Sobolev space and the following Sobolev inequality holds
$$\|g\|_{L^6(\R^3)}\leq C\|\nabla g\|_{L^2(\R^3)}\,.$$
This yields \eqref{eq:op-3D-Sob} for $p=6$. By H\"older's inequality, we get \eqref{eq:op-3D-Sob} for all $2\leq p\leq 6$.
\end{proof}

\subsection{Average asymptotics}

Here we return back to the analysis of the minimizers of the functional in \eqref{GL-Energy}.

\begin{lemma}\label{thm:Pi-Ab}
There exist $\kappa_0>1$, $C>0$ and a function ${\rm err}:[\kappa_0,\infty)\to(0,\infty)$  such that it  holds the following
 \begin{equation}\label{eq:op-l2*}
 \norm{v-\Pi_{1}v}_{{L^{2}}((-R/2,R/2)^{3})}\leq C\sqrt{1-\frac{H}{\kp}}\norm{v}_{L^{2}((-R/2,R/2)^{3})}\,,
 \end{equation}
\begin{equation}\label{eq:op-l2**}
\mathcal{E}_{0}\big(e^{i\kappa H\phi}\chi_{\ell}\psi,{\bf F };Q_{\ell}\big)
\geq \dfrac{1}{\sqrt{\kp H}}\int_{(-R/2,R/2)^3}\left(\left(1-\frac{\kp}{H}\right)|\Pi_{1}v|^{2}+\dfrac{\kp}{2H}|v|^{4}\right) dx\,,\\
\end{equation}
\begin{equation}\label{eq:op-l4-truncated}
\frac1{R^3}\int_{(-R/2,R/2)^3}|v|^4\,dx= -2E_{\rm Ab}\left(1-\frac{H}\kappa\right)^2+\left(1-\frac{H}\kappa\right)^2\,{\rm err}(\kappa)\,,
\end{equation}
and
\begin{equation}\label{eq:op-l2-lb**}
\frac1{R^3}\int_{(-R/2,R/2)^3}|v|^2\,dx\geq -2E_{\rm Ab}\left(1-\frac{H}\kappa\right)+\left(1-\frac{H}\kappa\right)\,{\rm err}(\kappa)\,,
\end{equation}
where
\begin{itemize}
\item $\displaystyle\lim_{\kappa\to\infty}{\rm err}(\kappa)=0$\,;
\item $\Fb$ is the magnetic potential in \eqref{MP-F}\,;
\item $\kappa\geq\kappa_0$ and $(\kappa,H)$ satisfy Assumption~\ref{assump'}\,;
\item $(\psi,\Ab)$ is a minimizer of  \eqref{GL-Energy}\,;
\item $\ell=(\kappa H)^{-1/2}\sqrt{2\pi[(\kappa-H)^{1/3}\kappa^{1/6}H]}$ with $[\cdot]$ denoting the integer part (floor function)\,;
\item $R=\ell\sqrt{\kappa H}$\,;
\item the cube $Q_{\ell}\subset \{{\rm dist}(x,\partial\Omega)>2\kp^{-1/2}\}$ and the function $\chi_\ell$ are as in Notation~\ref{not:Q-l}\,;
\item $\phi\in C^\infty(\overline{Q_\ell})$ is the function defined by Lemma~\ref{lem:gauge}\,;
\item $\Pi_{1}$ is the projection introduced  in \eqref{eq:Pi-3D}\,;
\item $x_j$ is the center of the cube $Q_\ell$ and
 $$v(x)=\Big(e^{i\kappa H\phi}\chi_{\ell}\psi\Big)\left(x_{j}+\frac{x}{\sqrt{\kp H}}\right),\qquad (x\in (-R/2,R/2)^{3}).$$
\end{itemize} 
\end{lemma}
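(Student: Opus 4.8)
The plan is to transplant the localized energy to the fixed box $(-R/2,R/2)^3$ via the stated rescaling and to exploit the spectral structure of $P^{3D}_R$ recorded in \eqref{eq:3D-sp}--\eqref{eq:qf-3D*}. Writing $u=e^{i\kappa H\phi}\chi_\ell\psi$ and changing variables $y=\sqrt{\kappa H}(x-x_j)$, and using that $\Fb$ is linear so that $\Fb(x)=\Fb(x_j)+(\kappa H)^{-1/2}\Fb(y)$ (the constant term $\kappa H\Fb(x_j)$ being absorbed by an affine phase so that $v$ satisfies the magnetic-periodicity conditions of $E^{3D}_R$), I would first record the exact identity
\[
\mathcal E_0(u,\Fb;Q_\ell)=\frac{1}{\sqrt{\kappa H}}\Big(\mathcal Q^{3D}_R(v)-\frac{\kappa}{H}\|v\|_{L^2}^2+\frac{\kappa}{2H}\|v\|_{L^4}^4\Big).
\]
The choice $\ell=(\kappa H)^{-1/2}\sqrt{2\pi[(\kappa-H)^{1/3}\kappa^{1/6}H]}$ makes $R^2=\ell^2\kappa H\in2\pi\mathbb N$, so Proposition~\ref{prop:op-2D}, the decomposition \eqref{eq:3D-2D}, and the projection $\Pi_1$ of \eqref{eq:Pi-3D} are all available.

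The heart of the argument is the operator inequality \eqref{eq:op-l2**}. Splitting $v=\Pi_1v+\Pi_2v$ orthogonally, I would use that $\Pi_1v$ lies fibrewise in the lowest Landau level, so $\mathcal Q^{3D}_R(\Pi_1v)=\|\Pi_1v\|_{L^2}^2+\|\partial_{x_3}\Pi_1v\|_{L^2}^2$, while the spectral gap \eqref{eq:gap} applied fibrewise together with \eqref{eq:qf-3D*} gives $\mathcal Q^{3D}_R(\Pi_2v)\ge 3\|\Pi_2v\|_{L^2}^2$. Substituting these, the $\Pi_1v$-terms cancel exactly and one is left with
\[
\mathcal Q^{3D}_R(v)-\frac\kappa H\|v\|_{L^2}^2-\Big(1-\frac\kappa H\Big)\|\Pi_1v\|_{L^2}^2=\|\partial_{x_3}\Pi_1v\|_{L^2}^2+\mathcal Q^{3D}_R(\Pi_2v)-\frac\kappa H\|\Pi_2v\|_{L^2}^2\ge\Big(3-\frac\kappa H\Big)\|\Pi_2v\|_{L^2}^2\ge0,
\]
since $\kappa/H\to1^-<3$; in view of the identity above this is precisely \eqref{eq:op-l2**}. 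For \eqref{eq:op-l2*}, the energy upper bound \eqref{eq:lb-en-l4} shows $\mathcal E_0(u,\Fb;Q_\ell)<0$, hence the bracket is negative; dropping the nonnegative $L^4$-term yields $\mathcal Q^{3D}_R(v)\le\frac\kappa H\|v\|_{L^2}^2=(1+\gamma)\|v\|_{L^2}^2$ with $\gamma=\frac\kappa H-1\approx1-\frac H\kappa$, and Lemma~\ref{lem:op-3D} with $p=2$ then delivers \eqref{eq:op-l2*}.

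The fourth-power statement \eqref{eq:op-l4-truncated} follows from the rescaling identity $\frac1{R^3}\|v\|_{L^4}^4=\frac1{|Q_\ell|}\int_{Q_\ell}|\chi_\ell\psi|^4\,dx$, which differs from $\frac1{|Q_\ell|}\int_{Q_\ell}|\psi|^4\,dx$ only through the boundary layer $Q_\ell\setminus Q_{\ell-1/\sqrt{\kappa H}}$; by Remark~\ref{rem:op-l4} this correction, divided by $|Q_\ell|$, is $O(R^{-1})\big(1-\tfrac H\kappa\big)^2=o\big((1-\tfrac H\kappa)^2\big)$, so \eqref{eq:op-l4-truncated} is just \eqref{eq:lb-op-l4} after rescaling.

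Finally, \eqref{eq:op-l2-lb**} is obtained by dividing the two matching energy bounds. Combining the lower bound \eqref{eq:op-l2**} with the upper bound \eqref{eq:lb-en-l4} and multiplying by $\sqrt{\kappa H}$ gives
\[
\Big(\frac\kappa H-1\Big)\|\Pi_1v\|_{L^2}^2\ge\frac\kappa{2H}\|v\|_{L^4}^4-(\kappa H)^{1/2}|Q_\ell|\,[\kappa-H]^2\big(E_{\rm Ab}+o(1)\big).
\]
Inserting the value of $\|v\|_{L^4}^4$ from \eqref{eq:op-l4-truncated}, the two terms on the right contribute equally to leading order, so their sum equals $-2E_{\rm Ab}\,|Q_\ell|\kappa^{3}(1-\tfrac H\kappa)^{1/2}(1-\tfrac H\kappa)^2(1+o(1))$; dividing by $(\frac\kappa H-1)R^3$ and using $R^3=|Q_\ell|(\kappa H)^{3/2}$ yields $\frac1{R^3}\|\Pi_1v\|_{L^2}^2\ge-2E_{\rm Ab}(1-\tfrac H\kappa)(1+o(1))$, whence \eqref{eq:op-l2-lb**} since $\|v\|_{L^2}^2\ge\|\Pi_1v\|_{L^2}^2$. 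I expect the main obstacle to be this last step: one must track signs carefully (the coefficient $1-\kappa/H$ is negative, and it is the negativity of $E_{\rm Ab}$ that converts the energy inequality into a \emph{lower} bound), and one must verify that the choice of $\ell$ together with Assumption~\ref{assump'} forces $R\to\infty$ fast enough that every remainder is genuinely $o(1-\tfrac H\kappa)$.
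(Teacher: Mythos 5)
Your proposal is correct and follows essentially the same route as the paper's proof: negativity of the rescaled linear energy via \eqref{eq:lb-en-l4} plus Lemma~\ref{lem:op-3D} for \eqref{eq:op-l2*}, the orthogonal splitting $v=\Pi_1v+\Pi_2v$ with the spectral gap (which is exactly the paper's terse ``min-max principle and \eqref{eq:3D-sp}'' step made explicit) for \eqref{eq:op-l2**}, rescaling plus the boundary-layer estimate of Remark~\ref{rem:op-l4} and Corollary~\ref{corol:lb-op-l4} for \eqref{eq:op-l4-truncated}, and the combination of \eqref{eq:op-l2**}, \eqref{eq:lb-en-l4} and \eqref{eq:op-l4-truncated} for \eqref{eq:op-l2-lb**}. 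The only blemish is a typo in your last display, where the factor $(H/\kappa)^{1/2}$ coming from $\frac{\kappa}{H}(\kappa H)^{3/2}=\kappa^3(H/\kappa)^{1/2}$ appears as $\bigl(1-\frac{H}{\kappa}\bigr)^{1/2}$; since this factor is $1+o(1)$ and cancels in the final division, your conclusion stands unchanged.
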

 \begin{proof}
{\bf Step~1. Proof of \eqref{eq:op-l2*}.}

 By a gauge transformation and a translation, we may assume that the center of $Q_{\ell}$ is $x_{j}=0$. 
 We infer from \eqref{eq:lb-en-l4} that, for $\kappa$ sufficiently large, 
\[
\int_{Q_{\ell}}\left(|(\nb-i\kp H{\bf F})\chi_\ell\psi e^{i\kappa H\phi}|^{2}-\kp^{2} |\chi_\ell\psi e^{i\kappa H\phi}|^{2}        \right)dx<0.
\]
Performing the change of variables $x\mapsto \sqrt{\kp H}x$, we get 
\[
\int_{(-R/2,R/2)^3}\left(|(\nb-i{\bf F})v|^{2}-(1+\gamma)|v|^{2}        \right)dx<0,
\]
where $\gamma=\frac{\kp}{H}-1\approx 1-\frac{H}\kappa$. 
Now the estimate in \eqref{eq:op-l2*} follows simply by  applying Lemma~\ref{lem:op-3D}.

{\bf Step~2. Proof of \eqref{eq:op-l2**}.}

Using a change of variable, the min-max principle and \eqref{eq:3D-sp}, we get,
\begin{equation}\label{eq:proof-l2**}
\begin{aligned}
 \mathcal{E}_{0}(e^{i\kappa H\phi}\chi_{\ell}\psi,{\bf F };Q_{\ell})&
=\dfrac{1}{\sqrt{\kp H}}\int_{(-R/2,R/2)^3}\left(|(\nb-i{\bf F})v|^{2}-\frac{\kp}{H}|v|^{2}+\dfrac{\kp}{2H}|v|^{4}\right) dx\\
&\geq \dfrac{1}{\sqrt{\kp H}}\int_{(-R/2,R/2)^3}\left(\left(1-\frac{\kp}{H}\right)|\Pi_{1}v|^{2}+\dfrac{\kp}{2H}|v|^{4}\right) dx\,.
\end{aligned}
\end{equation}

{\bf Step~3. Proof of \eqref{eq:op-l4-truncated}.}
We perform the change of variable $x\mapsto x/\sqrt{\kappa H}$ to get
$$\frac1{R^3}\int_{(-R/2,R/2)^3}|v|^4\,dx=\frac1{\ell^3}\int_{Q_\ell}|\chi_\ell\psi|^4\,dx\,.$$
We use the estimate in Remark~\ref{rem:op-l4} coupled with H\"older's inequality  and our choice of $\ell$ to write
$$\int_{Q_\ell\setminus Q_{\ell-\frac1{\sqrt{\kappa H}}}}(\chi_\ell^4-1)|\psi|^4\,dx=
\left(1-\frac{H}\kappa\right)^2\ell^3o(1)\,.$$
Now, by Corollary~\ref{corol:lb-op-l4}, 
$$
\begin{aligned}
\int_{Q_\ell}|\chi_\ell\psi|^4\,dx&=\int_{Q_\ell}|\psi|^4\,dx+\int_{Q_\ell\setminus Q_{\ell-\frac1{\sqrt{\kappa H}}}}(\chi_\ell^4-1)|\psi|^4\,dx\\
&=-2E_{\rm Ab}\left(1-\frac{H}\kappa\right)^2\ell^3+\left(1-\frac{H}\kappa\right)^2\ell^3o(1)\,.
\end{aligned}
$$

{\bf Step~4. Proof of \eqref{eq:op-l2-lb**}.}

We use \eqref{eq:op-l4-truncated} and the following estimate from  Corollary~\ref{corol:lb-op-l4}
$$
\mathcal{E}_{0}(e^{i\kappa H\phi}\chi_{\ell}\psi,{\bf F };Q_{\ell})=E_{\rm Ab}[\kappa-H]^{2}\ell^3+[\kappa-H]^2\ell^3o(1)
$$
and infer from \eqref{eq:proof-l2**}
$$
-\int_{(-R/2,R/2)^3}|\Pi_1v|^2\leq 2E_{\rm Ab}\left(1-\frac{H}\kappa\right)R^3+R^3o(1)\,.
$$
This finishes the proof of \eqref{eq:op-l2-lb**} in light of the estimate in \eqref{eq:op-l2*}.
\end{proof}

             %
             %
 
\begin{thm}\label{corol:Pi-Ab}
There exist $\kappa_0>1$ and  and a function ${\rm err}:[\kappa_0,\infty)\to(0,\infty)$  such that:
\begin{itemize}
\item $\displaystyle\lim_{\kappa\to\infty}{\rm err}(\kappa)=0$\,;
\item the following  inequality  hold
 \begin{equation}\label{eq:op-l2*-main}
\left|\frac1{|Q_\ell|}\int_{Q_\ell}|\psi|^2\,dx-E_{\rm Ab}\left(1-\frac{H}\kappa\right)\right|\leq \left(1-\frac{H}\kappa\right){\rm err}(\kappa)\,,
 \end{equation}
\end{itemize}
where
\begin{itemize}
\item $\kappa\geq\kappa_0$ and $(\kappa,H)$ satisfy Assumption~\ref{assump'}\,;
\item $(\psi,\Ab)$ is a minimizer of  \eqref{GL-Energy}\,;
\item $\ell=(\kappa H)^{-1/2}\sqrt{2\pi[(\kappa-H)^{1/3}\kappa^{1/6}H]}$ with $[\cdot]$ denoting the integer part (floor function)\,;
\item the cube $Q_{\ell}\subset \{{\rm dist}(x,\partial\Omega)>2\kp^{-1/2}\}$ 
is as in Notation~\ref{not:Q-l}\,.
\end{itemize}
\end{thm}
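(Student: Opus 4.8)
The plan is to prove \eqref{eq:op-l2*-main} as a two-sided bound on the rescaled function, the lower bound being essentially free and the matching upper bound being the new content. After a gauge transformation and a translation I may assume $Q_\ell$ is centered at the origin, and I work with the function $v$ of Lemma~\ref{thm:Pi-Ab} on the cube $K=(-R/2,R/2)^3$, where $R=\ell\sqrt{\kp H}$. The special choice of $\ell$ guarantees $R^2/2\pi\in\N$, so the periodic operator $P_R^{3D}$, its lowest eigenspace $L_R$, and the projection $\Pi_1$ of \eqref{eq:Pi-3D} are all at our disposal, and the change of variables $y=x_j+x/\sqrt{\kp H}$ identifies $\frac1{|Q_\ell|}\int_{Q_\ell}|\chi_\ell\psi|^2\,dx$ with $R^{-3}\norm{v}_{L^2(K)}^2$. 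The lower bound is then already contained in Lemma~\ref{thm:Pi-Ab}: estimate \eqref{eq:op-l2-lb**} gives $R^{-3}\norm{v}_{L^2(K)}^2\ge -2E_{\rm Ab}\big(1-\frac{H}\kp\big)+\big(1-\frac{H}\kp\big)\,{\rm err}(\kp)$. It therefore remains to produce a matching upper bound for $R^{-3}\norm{v}_{L^2(K)}^2$ (which fixes the constant appearing in \eqref{eq:op-l2*-main}).

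For the upper bound I would exploit the sharp $L^2$--$L^4$ inequality encoded in the Abrikosov constant. Optimizing the scaling $u\mapsto\lambda u$ in the definition \eqref{cR} of $c(R)$ (that is, minimizing $\frac{\lambda^4}2\norm{u}_4^4-\lambda^2\norm{u}_2^2$ over $\lambda>0$) shows that every $u\in L_R$ satisfies $\norm{u}_2^4\le -2c(R)\,\norm{u}_4^4$, while Theorem~\ref{thm:Ab} yields $-2c(R)=-2E_{\rm Ab}R^2(1+o(1))$. The bottom of $P_R^{3D}$ is the $n=0$ fibre of the decomposition \eqref{eq:3D-2D}, so $w:=\Pi_1 v$ is independent of $x_3$; applying the two-dimensional inequality on each slice and integrating in $x_3$ over an interval of length $R$ gives $\norm{w}_{L^2(K)}^4\le -2E_{\rm Ab}\,R^3\,\norm{w}_{L^4(K)}^4(1+o(1))$. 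Since \eqref{eq:op-l2*} yields $\norm{w}_{L^2(K)}^2=\norm{v}_{L^2(K)}^2(1+o(1))$ and \eqref{eq:op-l4-truncated} yields $\norm{v}_{L^4(K)}^4=-2E_{\rm Ab}\big(1-\frac{H}\kp\big)^2R^3(1+o(1))$, this would give precisely $R^{-3}\norm{v}_{L^2(K)}^2\le -2E_{\rm Ab}\big(1-\frac{H}\kp\big)(1+o(1))$, \emph{provided} the $L^4$ norm of $v$ is captured by its lowest-Landau projection, i.e.\ $\norm{w}_{L^4(K)}=\norm{v}_{L^4(K)}(1+o(1))$.

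The hard part is exactly this last substitution, namely the $L^4$-concentration of $v$ on the lowest Landau level, equivalently $\norm{\Pi_2 v}_{L^4(K)}=o\big(\norm{v}_{L^4(K)}\big)$ for $\Pi_2 v=v-\Pi_1 v$. The $L^4$ version of Lemma~\ref{lem:op-3D} only furnishes $\norm{\Pi_2 v}_{L^4(K)}\le C\sqrt{1-\frac{H}\kp}\,\norm{v}_{L^2(K)}$; inserting the orders $\norm{v}_{L^2(K)}^2\approx\big(1-\frac{H}\kp\big)R^3$ and $\norm{v}_{L^4(K)}^4\approx\big(1-\frac{H}\kp\big)^2R^3$, one checks that under Assumption~\ref{assump'} this bound is of strictly \emph{larger} order than $\norm{v}_{L^4(K)}$, so it is useless here and the naive energy identity is circular at the required precision. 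What is needed is a genuinely finer control of the excited part: I would combine the gradient bound $\norm{\nb|\Pi_2 v|}_{L^2(K)}^2\le C\big(1-\frac{H}\kp\big)\norm{v}_{L^2(K)}^2$ produced inside the proof of Lemma~\ref{lem:op-3D} with the spectral gap \eqref{eq:3D-sp} (which isolates the higher Landau levels, where the excess energy is bounded below by $2$) and the a priori pointwise estimate obtained by rescaling \eqref{grad-est}, so as to show that the $L^4$-mass of $\Pi_2 v$ is in fact comparable to $\sqrt{1-\frac{H}\kp}\,\norm{v}_{L^4(K)}$ rather than to $\sqrt{1-\frac{H}\kp}\,\norm{v}_{L^2(K)}$. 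Making this concentration quantitative is where I expect the real difficulty to lie.

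Finally I would transfer the two matching bounds from $v$ back to $\psi$. The change of variables identifies $R^{-3}\norm{v}_{L^2(K)}^2$ with $\ell^{-3}\int_{Q_\ell}|\chi_\ell\psi|^2\,dx$, and the difference between this and $\ell^{-3}\int_{Q_\ell}|\psi|^2\,dx$ is supported in the boundary layer $Q_\ell\setminus Q_{\ell-1/\sqrt{\kp H}}$, of volume $O(\ell^3/R)$. Estimating that contribution by Hölder's inequality together with the $L^4$ bound of Remark~\ref{rem:op-l4} produces a correction of relative size $O(R^{-1/2})=o(1)$, which is absorbed into ${\rm err}(\kp)$. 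Combining this with the two bounds above yields \eqref{eq:op-l2*-main}; this per-cube estimate is then the key input from which Theorem~\ref{thm:op-l2-main} follows by covering $D$ (up to a null set) by such cubes.
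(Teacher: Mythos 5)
Your overall frame is sound and partially coincides with the paper: the lower bound is exactly the paper's (it quotes \eqref{eq:op-l2-lb**} of Lemma~\ref{thm:Pi-Ab} with $R=\ell\sqrt{\kp H}$), the final boundary-layer transfer from $\chi_\ell\psi$ to $\psi$ via H\"older and Remark~\ref{rem:op-l4} is also the paper's closing step, and your constant $-2E_{\rm Ab}(1-\frac{H}{\kp})$ is what the paper's proof actually establishes. You have moreover correctly diagnosed the central obstruction: on the full cube $(-R/2,R/2)^3$, Lemma~\ref{lem:op-3D} only gives $\norm{\Pi_2 v}_{L^4}\leq C\sqrt{1-H/\kp}\,\norm{v}_{L^2}$, and since $\norm{v}_{L^2}^2\approx(1-\frac{H}{\kp})R^3$ while $\norm{v}_{L^4}^4\approx(1-\frac{H}{\kp})^2R^3$ with $(1-\frac{H}{\kp})^2R^3\gg1$ under Assumption~\ref{assump'}, the scaling argument through $c(R)$ cannot be closed at scale $R$. (A minor slip: the $\Pi_1$ of \eqref{eq:Pi-3D} is the slice-wise projection, so $\Pi_1 v$ is \emph{not} independent of $x_3$; your actual computation---the 2D inequality on each slice plus Cauchy--Schwarz in $x_3$---is nevertheless valid for that projection.)

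The genuine gap is that you never supply the mechanism replacing the missing $L^4$-concentration, and the ingredients you propose cannot supply it. An interpolation such as $\norm{\Pi_2v}_{L^4}^4\leq\norm{\Pi_2v}_{L^\infty}^2\norm{\Pi_2v}_{L^2}^2$ would require a sup-norm bound of order $(1-\frac{H}{\kp})^{1/2}$ after rescaling, which is precisely the estimate \eqref{eq:2D-up-FK} that the introduction states is \emph{unproved} in $3D$ and is the very reason for the restrictive Assumption~\ref{assump'}; the a priori bounds of Lemma~\ref{Lem:est} only give $|v|\lesssim 1$. The paper avoids proving concentration at scale $R$ altogether: it covers $Q_\ell$ by much smaller $(\ell',\frac{L}{\sqrt{\kp H}})$-boxes whose parameters (see \eqref{eq:ep-L-l'}--\eqref{eq:ep-L-l'*}) satisfy $(1-\frac{H}{\kp})^2R'^2L\ll1$, so that on each rescaled box H\"older converts the spectral-gap bound into $\norm{v_i-\Pi_1v_i}_{L^4}\leq C(R'^2L)^{1/4}\sqrt{1-H/\kp}\,\norm{v_i}_{L^4}\ll\norm{v_i}_{L^4}$, i.e.\ the concentration you need holds for free at the small scale. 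The price is the machinery your proposal lacks entirely: a partition of unity $(h_i)$ with controlled gradients and the energy decomposition leading to \eqref{eq:step1-op-l2-main} via Corollary~\ref{corol:lb-op-l4}; a counting argument splitting the boxes into $\mathcal J_\pm$ according to the sign of the linear energy \eqref{eq:form-qi} and showing $N_+=N_-\,o(1)$; and the per-box Abrikosov lower bound through $c(R')$, summed over $\mathcal J_-$. Without either this localization or a proof of the scale-$R$ concentration (which is essentially as strong as \eqref{eq:2D-up-FK}), your upper bound remains unproved---and, as you honestly flag yourself, that is exactly where the entire content of the theorem lies.
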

\begin{proof}
We will prove \eqref{eq:op-l2*-main} in two steps by establishing the upper and lower bounds  in \eqref{eq:op-l2*-main} independently.

The lower bound follows easily from  Theorem~\ref{thm:Pi-Ab} used with  $R=\ell\sqrt{\kappa H}$ and $\ell$ as defined in Theorem~\ref{corol:Pi-Ab}. Namely we use \eqref{eq:op-l2-lb**}.


The proof of the upper bound is a bit lengthy. We introduce the  parameters
\begin{multline}\label{eq:ep-L-l'}
\alpha=\left(1-\frac{H}\kappa\right)^{1/16}\,,\quad \epsilon=\left(1-\frac{H}\kappa\right)^{3/8}\,,\quad L=\left(1-\frac{H}\kappa\right)^{-5/8}\,,\\
\quad \ell'=(\kappa-H)^{-1}\epsilon\quad{\rm and}\quad R'=\ell'\sqrt{\kappa H}\,.\end{multline}
Note that these parameters satisfy
\begin{equation}\label{eq:ep-L-l'*}
\left(1-\frac{H}{\kappa}\right)^2 R'^2L\ll 1\,,\quad \kappa^{-1}\ll\ell'\ll 1\quad{\rm and}\quad 1\ll R'\ll R\,,
\end{equation}
and
\begin{equation}\label{eq:ep-L-l'**}
\big((\ell')^{-2}+\kappa^2L^{-2}\big)\alpha^{-2}\left(1-\frac{H}\kappa\right)\ll \ell^3[\kappa-H]^2\,.
\end{equation}
Here
\begin{equation}\label{eq:R***}
R=\ell\sqrt{\kappa H}\,,
\end{equation}
and $\ell$ is defined in Theorem~\ref{corol:Pi-Ab}.

{\bf Step~1.} 

Let $(\widetilde{Q}_{\ell',L,i})_i$ be a family of $(\ell',\frac{L}{\sqrt{\kp H}})$-boxes covering the cube $Q_\ell$ (cf. Definition~\ref{defn:box}). 
These boxes are constructed as follows. First we cover $Q_\ell$ by $N$ boxes of the form
$$\widetilde Q_{\ell',\mathcal L,i}=\Big(-\frac{\ell'}{2},\frac{\ell'}{2}\Big)^2\times \Big(-\frac{L}{2\sqrt{\kp H}},\frac{L}{2\sqrt{\kp H}}\Big)+x_i\,,\quad x_i\in\R^3\,.$$
We choose these boxes to be disjoint (see Figure~\ref{fig}), hence the number $N$ satisfies
\begin{equation}\label{eq:Nb*}
\left| N-\frac{\ell^3\sqrt{\kappa H}}{\ell'^2L}\right|\leq C\frac{\ell^2\sqrt{\kappa H}}{\ell'^2L}\,.
\end{equation}  
Now we choose the boxes $Q_{\ell',L,i}$  by expanding the sides of $\widetilde Q_{\ell', L,i}$ slightly. Precisely, we take
$$ Q_{\ell',L,i}=\Big(-(1+\alpha)\frac{\ell'}{2},(1+\alpha)\frac{\ell'}{2}\Big)^2\times \Big(-(1+\alpha)\frac{L}{2\sqrt{\kp H}},(1+\alpha)\frac{L}{2\sqrt{\kp H}}\Big)+x_i\,.  $$

Consider a partition of unity $(h_i)$  satisfying in $Q_\ell$
$$\sum_{i}h_i=1\,,\quad \sum_i|\nabla h_i|^2\leq C\big((\ell')^{-2}+\kappa^2L^{-2}\big)\alpha^{-2}\,,$$
and ${\rm supp}~h_{i}\subset Q_{\ell',L,i}$.

Let the notation be as in Lemma~\ref{thm:Pi-Ab} and denote by 
\begin{equation}\label{eq:def-w}
w=e^{i\kappa H\phi}\chi_{\ell}\psi\,.
\end{equation}
 \begin{figure} \label{fig} 
 \centering
 \includegraphics[width=1\textwidth]{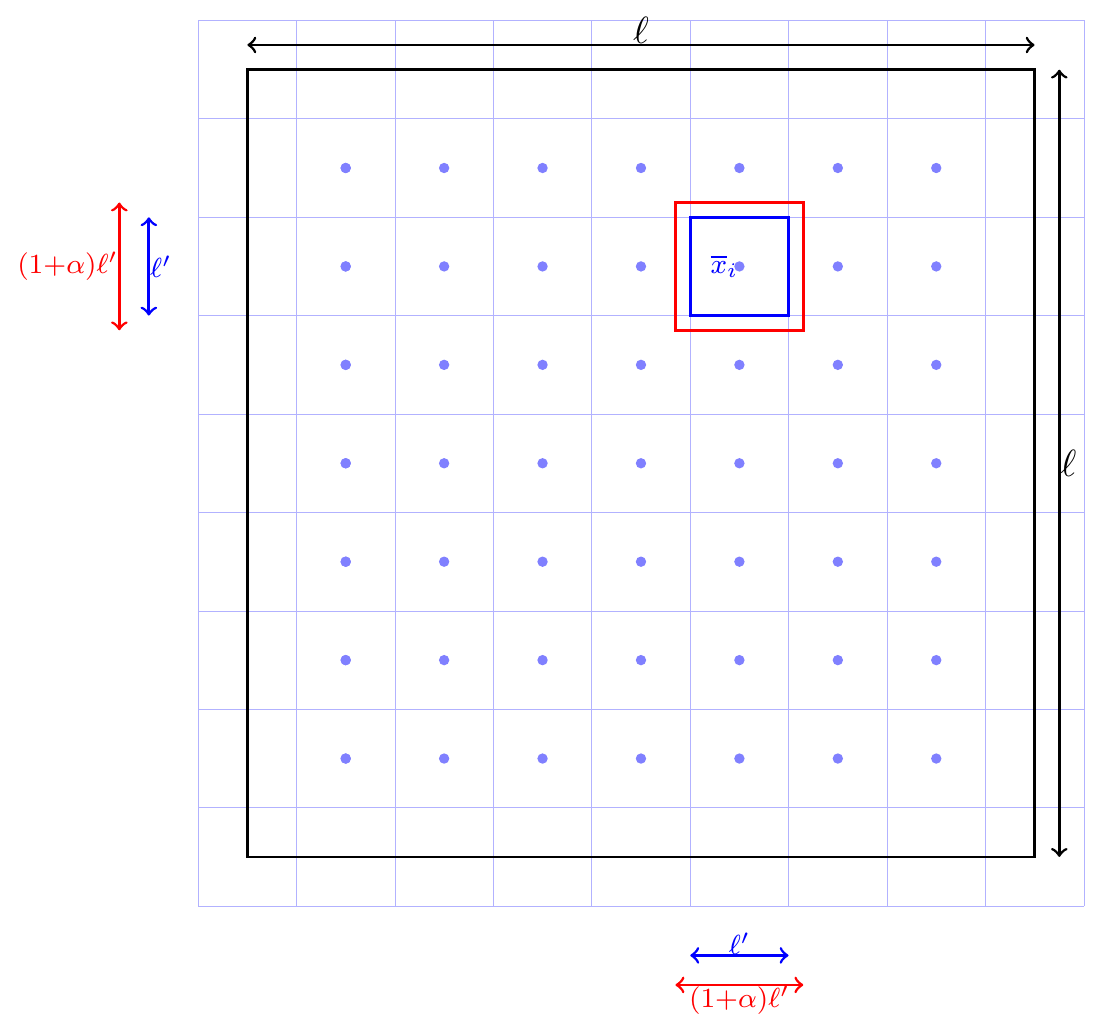}
\caption{The  projection on the $xy$-plane of the cube $Q_{\ell}$ decomposed into the small boxes $\widetilde Q_{\ell',L,i}$. Note the representation of the box $\widetilde Q_{\ell',L,i}$ with center $x_i=(\bar x_i,z_{i})\in\R^3$ and the slightly larger box $Q_{\ell',L,i}$.}
\end{figure}

We have the decomposition formula,
$$ 
\begin{aligned}
\mathcal{E}_{0}(w,{\bf F };Q_{\ell})
&\geq \sum_{i}\mathcal E_0(h_i w,\Fb;Q_{\ell',L,i})
-\sum_i\big\|\,|\nabla h_i|\psi\,\big\|_{L^2(Q_\ell)}^2\\
&\geq \sum_{i}\mathcal E_0(h_i w,\Fb;Q_{\ell',L,i})
-C\big((\ell')^{-2}+\kappa^2L^{-2}\big)\alpha^{-2}\|\psi\|_{L^2(Q_\ell)}^2\\
&\geq \sum_{i}\mathcal E_0(h_i w,\Fb;Q_{\ell',L,i})-C\big((\ell')^{-2}+\kappa^2L^{-2}\big)\alpha^{-2}\ell^3\left(1-\frac{H}\kappa\right)
\quad[{\rm by Remark~\ref{rem:op-l4}}]\\
&\geq \sum_{i}\mathcal E_0(h_i w,\Fb;Q_{\ell',L,i})-\ell^3[\kappa-H]^2o(1)\quad[{\rm by~}  \eqref{eq:ep-L-l'**}]\,.
\end{aligned}$$
In light of Corollary~\ref{corol:lb-op-l4}, we get
\begin{equation}\label{eq:step1-op-l2-main}
\sum_{i}\mathcal E_0(h_i w,\Fb;Q_{\ell',L,i})\leq E_{\rm Ab}(\kappa-H)^2\ell^3+\ell^3(\kappa-H)^2o(1)\,.
\end{equation}
%

{\bf Step~2.}

Let
\begin{equation}\label{eq:form-qi}
q(h_i w,\Fb;Q_{\ell',L,i})=\int_{Q_{\ell',L,i}}\Big(|(\nabla-i\kappa H\Fb)h_{i}w|^2-\kappa^2|h_{i}w|^2\Big)\,dx\,.
\end{equation}
We introduce the two sets of indices
$$\mathcal J_+=\{i~:~q(h_i w,\Fb;Q_{\ell',L,i})> 0\}\quad{\rm and}\quad
\mathcal J_-=\{i~:~q(h_i w,\Fb;Q_{\ell',L,i})\leq 0\}\,.$$
Let $N_+={\rm Card},\mathcal J_+$ and $N_-={\rm Card}\,\mathcal J_-$. We will prove 
that
\begin{equation}\label{eq:step2-op-l2-main}
 \left| N_- -\frac{\ell^3\sqrt{\kappa H}}{\ell'^2L}\right| \leq \frac{\ell^3\sqrt{\kappa H}}{\ell'^2L}\,o(1)\,,
\end{equation}
and
\begin{equation}\label{eq:step2-op-l2-main*}
N_+=N_-o(1)\,.
\end{equation}
Since $N_++N_-=N$, \eqref{eq:step2-op-l2-main*} is a simple consequence of \eqref{eq:Nb*} and \eqref{eq:step2-op-l2-main}. The upper bound in \eqref{eq:step2-op-l2-main} is a simple consequence of \eqref{eq:Nb*} since $N_-\leq N$.

We turn to the proof of the lower bound in \eqref{eq:step2-op-l2-main}. We have the trivial lower bound that follows from \eqref{eq:m0gb} and \eqref{eq:en-m0}, 
Theorem~\ref{thm:Ab} and a change of variables
$$\mathcal E_0(h_i w,\Fb;Q_{\ell',L,i})\geq \dfrac{\kp}{H\sqrt{\kp H}} \int_{-L/2}^{L/2}m_0\left(\frac{H}\kappa,R'\right)\,dx_3\geq  \dfrac{\kp}{H\sqrt{\kp H}}  g\left(\frac{H}\kappa\right)R'^2L\,.$$
Here $R'=\ell'\sqrt{\kappa H}$. Using Theorem~\ref{thm:Ab}, we get further
$$\mathcal E_0(h_i w,\Fb;Q_{\ell',L,i})\geq  \Big(E_{\rm Ab}+o(1)\Big)\left(\kappa-H\right)^2\ell'^2L(\kappa H)^{-1/2}\,.$$
Inserting this into \eqref{eq:step1-op-l2-main} and dropping the positive terms corresponding to $i\in\mathcal J_+$, we get
$$
\begin{aligned}
N_-\Big(E_{\rm Ab}+o(1)\Big)\left(\kappa-H\right)^2\ell'^2L(\kappa H)^{-1/2}
&\leq \sum_{i\in\mathcal J_-}\mathcal E_0(h_i w,\Fb;Q_{\ell',L,i})\\
&\leq E_{\rm Ab}(\kappa-H)^2\ell^3+\ell^3(\kappa-H)^2o(1)\,.\end{aligned}$$
Since $E_{\rm Ab}<0$, this yields \eqref{eq:step2-op-l2-main}.

{\bf Step~3.}

We denote by $x_i$ the center of the box $Q_{\ell',L,i}$. If $i\in\mathcal J_-$, the change of the variable $x\mapsto (x-x_i)\sqrt{\kappa H}$ yields
$$\int_{Q_{R',L}}\Big(|(\nabla-i\Fb)v_i|^2-(1+\gamma)|v_i|^2\Big)\,dx\leq 0\,,$$
where $\gamma=1-\frac{\kappa}{H}$, $Q_{R',L}=(-R'/2,R'/2)^2\times(-L/2,L/2)$ and
\begin{equation}\label{eq:vi**}
v_i(x)=h_i w\left(x_i+\frac{x}{\sqrt{\kappa H}}\right)\,.\end{equation}
We apply Lemma~\ref{lem:op-3D} to obtain
\begin{equation}\label{eq:step3-main}
\|v_i-\Pi_1v_i\|_{L^p(Q_{R',L})}\leq C\sqrt{1-\frac{H}\kappa}\,\|v_i\|_{L^2(Q_{R',L})}\,,\quad p\in\{2,4\}\,,
\end{equation}
where $\Pi_1$ is the projection in \eqref{eq:Pi-3D}.
For $p=4$, we write by H\"older's inequality,
\begin{equation}\label{eq:step3-main-p=4}
\|v_i-\Pi_1v_i\|_{L^4(Q_{R',L})}\leq C(R'^2L)^{1/4}\sqrt{1-\frac{H}\kappa}\,\|v_i\|_{L^4(Q_{R',L})}\ll \|v_i\|_{L^4(Q_{R',L})}\,,
\end{equation}
by \eqref{eq:ep-L-l'*}. 
   Let us introduce the function $u_i$ as follows,
       \begin{equation}\label{eq:step3-ui}
v_i=\left(1-\frac{H}\kappa\right)^{1/2}u_i.
       \end{equation}
Since $R'\gg1 $, we get  (cf. \eqref{cR} and Theorem~\ref{thm:Ab}) 
$$\int_{Q_{R',L}}\Big(-|\Pi_1u_i|^2+\frac12|\Pi_1u_i|^4\Big)\,dx\geq \int_{-L/2}^{L/2}c(R')\,dx_3\geq E_{\rm Ab}R'^2L-R'^2Lo(1)\,.$$
Thus, we get,
$$
-\sum_{i\in\mathcal J_-}\int_{Q_{R',L}}|\Pi_1u_i|^2\,dx
\geq -\frac12 \sum_{i\in\mathcal J_-}\int_{Q_{R',L}}|\Pi_1u_i|^4\,dx+\Big(E_{\rm Ab}+o(1)\Big)R'^2L N_-\,.$$
Using \eqref{eq:step3-main-p=4}, \eqref{eq:step2-op-l2-main} and $R'=\ell'\sqrt{\kappa H}$, we get further
\begin{equation}\label{eq:step3-main*}
-\sum_{i\in\mathcal J_-}\int_{Q_{R',L}}|\Pi_1u_i|^2\,dx\geq-\frac12\big(1+o(1)\big) \sum_{i\in\mathcal J_-}\int_{Q_{R',L}}|u_i|^4\,dx
+E_{\rm Ab}\ell^3(\kappa H)^{3/2} +\ell^3(\kappa H)^{3/2}o(1)\,.
\end{equation}
In light of \eqref{eq:step3-ui}, \eqref{eq:vi**} and \eqref{eq:def-w}, we get by a change variable transformation
$$
\begin{aligned}
\sum_{i\in\mathcal J-}\int_{Q_{R',L}}|u_i|^4\,dx
&=(\kappa H)^{3/2}\left(1-\frac{H}\kappa\right)^{-2}\sum_{i\in\mathcal J-}\int_{Q_{\ell^{\prime},\tiny\frac{L}{\sqrt{\kp H}}}}|h_iw|^4\,dx\\
&
\leq (\kappa H)^{3/2}\left(1-\frac{H}\kappa\right)^{-2}\int_{Q_{\ell}}|\psi|^4\,dx
\leq -2E_{\rm Ab}(\kappa H)^{3/2}\ell^3+(\kappa H)^{3/2}\ell^3o(1)
\end{aligned}
$$
by Corollary~\ref{corol:lb-op-l4}. Inserting this into \eqref{eq:step3-main*}, we get
$$
-\sum_{i\in\mathcal J_-}\int_{Q_{R',L}}|\Pi_1u_i|^2\,dx\geq 
2E_{\rm Ab}\ell^3(\kappa H)^{3/2} +\ell^3(\kappa H)^{3/2} o(1)\,.
$$
Now, using \eqref{eq:step3-main}, we may write,
\begin{equation}\label{eq:step3-main***}
\sum_{i\in\mathcal J_-}\int_{Q_{R',L}}|u_i|^2\,dx\leq 
\big(1+o(1)\big)\sum_{i\in\mathcal J_-}\int_{Q_{R',L}}|\Pi_1u_i|^2\,dx\leq
-2E_{\rm Ab}\ell^3(\kappa H)^{3/2} +\ell^3(\kappa H)^{3/2} o(1)\,.
\end{equation}
Recall the expression of $u_i$ in \eqref{eq:step3-ui}.  Performing  a change of variable, we get
$$
\int_{Q_{R',L}}|u_i|^2\,dx=\left(1-\frac{H}\kappa\right)^{-1}(\kappa H)^{3/2}
\int_{Q_{\ell^{\prime},\tiny\frac{L}{\sqrt{\kp H}}}}|h_i\chi_\ell\psi|^2\,dx\,.$$
Using \eqref{eq:step2-op-l2-main} and \eqref{eq:step2-op-l2-main*}, we get
$$\begin{aligned}
\sum_{i\in\mathcal J_-}\int_{Q_{R',L}}|u_i|^2\,dx
&=\left(1-\frac{H}\kappa\right)^{-1}(\kappa H)^{3/2}\sum_{j\in\mathcal J_\pm}\int_{Q_{\ell^{\prime},\tiny\frac{L}{\sqrt{\kp H}}}}|h_i\chi_\ell\psi|^2\,dx+o\left(\frac{\ell^3\sqrt{\kappa H}}{\ell'^2L}\right)\\
&=\left(1-\frac{H}\kappa\right)^{-1}(\kappa H)^{3/2}\int_{Q_\ell}|\chi_\ell\psi|^2\,dx+\ell^3(\kappa H)^{3/2} o(1)\,,\end{aligned}
$$
by the definition of $\ell'$ and $L$ in \eqref{eq:ep-L-l'}.
We insert this into \eqref{eq:step3-main***} and get,
\begin{equation}\label{eq:step3-op-l2-main}
\int_{Q_\ell}|\chi_\ell\psi|^2\,dx\leq -2E_{\rm Ab}\ell^3\left(1-\frac{H}\kappa\right)+\ell^3\left(1-\frac{H}\kappa\right)o(1)\,.
\end{equation}
The estimate in Remark~\ref{rem:op-l4} and H\"older's inequality yield
$$\int_{Q_\ell}(1-\chi_\ell^2)|\psi|^2\,dx\leq 
\int_{Q_\ell\setminus Q_{\ell-\frac1{\sqrt{\kappa H}}}}|\psi|^2\,dx\leq \frac{\ell^{5/2}}{(\kappa H)^{1/4}}\left(1-\frac{H}\kappa\right)=o\Bigg(\ell^3\left(1-\frac{H}{\kappa}\right)\Bigg)\,.$$
Inserting this into \eqref{eq:step3-op-l2-main}, we get 
 the upper bound in Theorem~\ref{corol:Pi-Ab}.
\end{proof}

\subsection*{Acknowledgments}
AK is supported by a research grant from Lebanese University.

\end{document}